\documentclass{amsart}

\usepackage{amsmath,amssymb,amsthm}
\usepackage{indentfirst,color}
\usepackage[colorlinks,citecolor=red,linkcolor=blue,urlcolor=cyan]{hyperref}

\numberwithin{equation}{section}

\theoremstyle{plain}%
\newtheorem{theorem}{Theorem}[section]%
\newtheorem{lemma}[theorem]{Lemma}%
\newtheorem{corollary}[theorem]{Corollary}%
\theoremstyle{remark}%
\newtheorem{definition}[theorem]{Definition}%
\newtheorem{remark}[theorem]{Remark}%

\newcommand{\BB}{\mathbb{B}}%
\newcommand{\CC}{\mathbb{C}}%
\newcommand{\DD}{\mathbb{D}}%
\newcommand{\RR}{\mathbb{R}}%
\newcommand{\Sp}{\mathbb{S}}%
\newcommand{\calF}{\mathcal{F}}%
\newcommand{\calO}{\mathcal{O}}%
\newcommand{\pd}{\partial}%
\newcommand{\ddbar}{i\partial\bar{\partial}}%
\newcommand{\inner}[1]{\langle#1\rangle}%

\renewcommand{\geq}{\geqslant}%
\renewcommand{\leq}{\leqslant}%
\renewcommand{\Re}{\operatorname{Re}}%

\begin{document}

\title[A Characterization of Pluriharmonic Functions]{A Characterization of Pluriharmonic Functions via $p$-Bergman Kernels and A Forelli-type Result}
\author{Wang Xu}
\address{School of Mathematics, Sun Yat-sen University, Guangzhou 510275, China}
\email{xuwang@amss.ac.cn; xuwang6@mail.sysu.edu.cn}

\begin{abstract}
Given a plurisubharmonic function $\varphi>-\infty$ on the unit ball $\BB^n$ and a constant $0<p\leq2$, Guan-Zhou's optimal $L^p$ extension theorem yields a sharp lower bound for the weighted $p$-Bergman kernel. We show that the equality holds at some point if and only if $\varphi$ is pluriharmonic on $\BB^n$. This provides an analogue of the equality part of Suita's conjecture. As an application, we prove a Forelli-type result. Let $\varphi$ be a real-valued function on $\BB^n$. If every slice of $\varphi$ through the origin is harmonic and if $\varphi$ is plurisubharmonic in a neighborhood of the origin, then $\varphi$ is pluriharmonic on $\BB^n$.
\end{abstract}

\keywords{$p$-Bergman kernel, Pluriharmonic function, Optimal $L^p$-extension theorem, Minimal $L^2$ integrals, Forelli's theorem}

\subjclass[2020]{32A36, 32D15, 31C10}

\thanks{The author is supported by National Key R\&D Program of China (No. 2024YFA1015200) and National Natural Science Foundation of China (No. 12501107).}

\maketitle

\section{Introduction}

The Bergman kernel is an important object in several complex variables and complex geometry.
Given a domain $\Omega$ in $\CC^n$ and an upper semi-continuous function $\varphi$ on $\Omega$, the weighted \textit{Bergman space} of $\Omega$ is defined by
\begin{equation*}
A^2(\Omega;e^{-\varphi}) = \left\{f\in\calO(\Omega): \int_\Omega |f|^2e^{-\varphi} d\lambda < +\infty \right\},
\end{equation*}
and the weighted \textit{Bergman kernel} of $\Omega$ is defined by 
\begin{equation*}
K_\Omega(z;e^{-\varphi}) = \sup\left\{ |f(z)|^2 : f\in A^2(\Omega;e^{-\varphi}), \int_\Omega |f|^2e^{-\varphi} d\lambda \leq 1 \right\}, \quad z\in\Omega.
\end{equation*}

In this article, we focus on the special case where $\Omega$ is the unit ball $\BB^n \subset \CC^n$ and $\varphi>-\infty$ is a plurisubharmonic function on $\BB^n$. By the celebrated optimal $L^2$ extension theorem (see B{\l}ocki \cite{Blocki} and Guan-Zhou \cite{GuanZhou12,GuanZhou15}), there exists a holomorphic function $F\in\calO(\BB^n)$ such that $F(0)=1$ and
\begin{equation} \label{Eq:OptExt}
\int_{\BB^n} |F|^2e^{-\varphi} d\lambda \leq \frac{\pi^n}{n!} e^{-\varphi(0)}.
\end{equation}
Note that ${\pi^n}/{n!}$ is precisely the volume of $\BB^n$. As a consequence,
\begin{equation} \label{Eq:KerLB}
K_{\BB^n}(0;e^{-\varphi}) \geq \frac{n!}{\pi^n}e^{\varphi(0)}.
\end{equation}

This lower bound for the Bergman kernel is \textit{sharp} in the sense that the uniform constant cannot be replaced by any greater one. Indeed, if $\varphi$ is pluriharmonic, i.e., $\ddbar\varphi\equiv0$, then there exists $u\in\calO(\BB^n)$ such that $\varphi=2\Re u$; it then follows that $K_{\BB^n}(0;e^{-\varphi}) = K_{\BB^n}(0)|e^{u(0)}|^2$ and \eqref{Eq:KerLB} becomes an equality. On the other hand, if $\varphi$ is strictly plurisubharmonic, i.e., $\varphi\in C^2$ and $\ddbar\varphi>0$, then one can construct an $F\in\calO(\BB^n)$ for which the inequality \eqref{Eq:OptExt} is strict (see, for example, \cite[Theorem 1.7]{XuZhou24}); hence the inequality \eqref{Eq:KerLB} is also strict. We refer the interested reader to \cite{Inayama} and \cite{LiuXu24} for quantitative relations between the strict positivity of $\ddbar\varphi$ and the sharper estimate in \eqref{Eq:OptExt}.

Consequently, whether equality holds in \eqref{Eq:KerLB} is governed by the positivity of the weight: it holds when $\ddbar\varphi\equiv0$ and fails when $\ddbar\varphi>0$, while the intermediate case is far from clear. It is therefore a natural problem to characterize the equality case in \eqref{Eq:KerLB}. The expectation is that
\medskip
\begin{center}\itshape
	the equality in \eqref{Eq:KerLB} holds precisely when $\varphi$ is pluriharmonic.
\end{center}
\medskip
In particular, the expectation predicts a \textit{rigidity} phenomenon: the equality at a single point forces the vanishing of $\ddbar\varphi$ on the whole ball.

This problem is a weighted counterpart of the famous \textit{Suita conjecture}, which was completely solved by Guan-Zhou \cite{GuanZhou12,GuanZhou15} (the inequality part for planar domains was solved by B{\l}ocki \cite{Blocki}). Let $\Omega$ be an open Riemann surface admitting a Green function. Suita \cite{Suita} conjectured that $\pi K_\Omega(z) \geq c_\beta(z)^2$, and equality holds at some point if and only if $\Omega$ is conformally equivalent to the unit disc $\DD$, possibly minus a closed polar set. Here, $K_\Omega$ and $c_\beta$ denote the Bergman kernel and the logarithmic capacity of $\Omega$ with respect to a local coordinate. Thus, both problems ask when the Bergman kernel attains its lower bound, but one concerns the base manifold, while the other concerns the weight function. We refer the interested reader to \cite{XuZhou26} and the references therein for further developments of the generalized Suita conjecture.\\

There is an affirmative answer to the above problem in the case of $n=1$.

\begin{theorem}[Liu-Xu {\cite[Theorem 1.4]{LiuXu24}}; see also Guan-Mi {\cite[Theorem 1.11]{GuanMi22}}] \label{Thm:Harm}
Let $\varphi>-\infty$ be a subharmonic function on the unit disc $\DD$. Then
\begin{equation*}
K_{\DD}(0;e^{-\varphi}) = \frac{1}{\pi} e^{\varphi(0)}
\end{equation*}
if and only if $\varphi$ is harmonic on $\DD$.
\end{theorem}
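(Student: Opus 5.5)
The ``if'' direction was already explained in the introduction: if $\varphi$ is harmonic on $\DD$, write $\varphi=2\Re u$ with $u\in\calO(\DD)$. Then $f\mapsto fe^{-u}$ is an isometry from $A^2(\DD;e^{-\varphi})$ onto $A^2(\DD)$. Hence $K_\DD(0;e^{-\varphi})=K_\DD(0)e^{\varphi(0)}=\frac1\pi e^{\varphi(0)}$. All the work is in the ``only if'' direction. I would prove it by showing that equality forces the \emph{minimal $L^2$ integral} associated with sublevel sets of the Green function to be affine in the right variable, and that this linearity forces the mean values of $\varphi$ on circles to be constant.

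Set $G(z)=\log|z|$ and, for $t\geq0$, let $D_t=\{|z|<e^{-t}\}$. Define
\begin{equation*}
C(t)=\inf\Big\{\int_{D_t}|F|^2e^{-\varphi}\,d\lambda : F\in\calO(D_t),\ F(0)=1\Big\}.
\end{equation*}
The key input is Guan's concavity theorem for minimal $L^2$ integrals (the engine behind Guan--Zhou's optimal extension): $r\mapsto C(-\log r)$ is concave in $r^2$. Equivalently, $t\mapsto C(t)$ is concave with respect to the variable $e^{-2t}$. One also has the asymptotic $\lim_{t\to\infty}C(t)e^{2t}=\pi e^{-\varphi(0)}$, which follows from upper semicontinuity together with the sub-mean value property. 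The optimal extension inequality is exactly the statement that this concave function, which vanishes at $e^{-2t}=0$, lies above its chord. Now $K_\DD(0;e^{-\varphi})=1/C(0)$, so equality means $C(0)=\pi e^{-\varphi(0)}$. Since the slope of the concave function at $0$ equals $\pi e^{-\varphi(0)}$, which is also its value at $1$, concavity forces $C(t)=\pi e^{-\varphi(0)}e^{-2t}$ for all $t\geq0$.

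Next I use the linearity characterization: when $C$ is linear in $e^{-2t}$, the minimizer $F_0$ on $\DD$ restricts to the minimizer on every $D_t$. Moreover
\begin{equation*}
\int_{\{|z|=r\}}|F_0|^2e^{-\varphi}\,d\sigma=2\pi r\,e^{-\varphi(0)}\quad\text{for a.e. } r,
\end{equation*}
which is obtained by differentiating $C$. Setting $h=\varphi-\log|F_0|^2$, which is subharmonic away from the zeros of $F_0$, we get that the circle averages of $e^{-h}$ equal $e^{-h(0)}$. Jensen's inequality then gives $e^{-M_h(r)}\leq\text{avg}(e^{-h})=e^{-h(0)}$, where $M_h(r)$ is the circle mean. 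Hence $M_h(r)\geq h(0)$, while subharmonicity gives $M_h(r)\geq h(0)$ as well. To pin this down, I would instead argue directly on $\varphi$. Take $F_0$ to be the minimizer and show that it has no zeros, either by a perturbation argument or by the variational orthogonality of $F_0$ to functions vanishing at $0$. The equality case of Jensen's inequality then makes $h$ constant on each circle, with circle mean equal to $h(0)$. A subharmonic function whose circle means are constant is harmonic, by the Riesz decomposition, since the mean $M_h(r)$ increases by $\int_0^r \mu(D_s)\,ds/s$. Therefore $\varphi=h+\log|F_0|^2$ is harmonic.

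The main obstacle is this last step: extracting pointwise information on $\varphi$ from the linearity of $C(t)$, that is, showing that $F_0$ is zero-free and that the equality in Jensen's inequality transfers to the Riesz measure of $\varphi$. If that fails, I would fall back on the known characterization of linearity of minimal $L^2$ integrals (Guan--Mi), which asserts in this setting that linearity holds iff $\varphi=2\log|g|+2u$ with $u$ harmonic and $g$ holomorphic nonvanishing at $0$. In that case $\varphi$ is harmonic wherever it is finite, and the hypothesis $\varphi>-\infty$ then excludes zeros of $g$, so $\varphi$ is harmonic on all of $\DD$.
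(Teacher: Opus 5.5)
Your first half is sound and follows the mechanism of Lemma \ref{Lemma1}. Concavity of $r\mapsto C(-\tfrac12\log r)$ together with the bound $C(t)\le\pi e^{-\varphi(0)}e^{-2t}$ forces linearity. Theorem \ref{Thm:Linear} then makes $F_0|_{\DD(0,s)}$ the minimizer on every $\DD(0,s)$, with $\int_{\DD(0,s)}|F_0|^2e^{-\varphi}\,d\lambda=\pi s^2e^{-\varphi(0)}$. (The half of your asymptotic that is actually used is this upper bound, and it comes from Corollary \ref{Cor:OptExt1} on $D_t$, not from upper semicontinuity.)

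The gap is in the extraction step. As you noticed, Jensen's inequality and the sub-mean value property both give $M_h(r)\ge h(0)$. Nothing produces the reverse inequality, so there is no equality case of Jensen to invoke, whether or not $F_0$ is zero-free. In fact the information you use cannot suffice. Take $F\equiv1$ and $\varphi=-\log(1+\tfrac12\Re z)$. This $\varphi$ is finite, smooth, subharmonic and not harmonic, $F$ is zero-free, and $\frac{1}{2\pi}\int_0^{2\pi}|F|^2e^{-\varphi(re^{i\theta})}\,d\theta=1=e^{-\varphi(0)}$ for every $r$. So the minimality of $F_0$ must be used beyond the value of $C$, and zero-freeness of $F_0$ is an output of the argument, not something you can establish first.

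The missing idea is to test the orthogonality of Lemma \ref{Lemma:Ortho} against multiples of $F_0$ itself. For $k\ge1$, $g=z^kF_0$ lies in $A^2(\DD(0,s);e^{-\varphi})$ and vanishes at $0$. Hence $\int_{\DD(0,s)}|F_0|^2e^{-\varphi}\,\bar z^k\,d\lambda=0$ for all $s\in(0,1]$. Pass to polar coordinates and apply Corollary \ref{Cor:RA} with $n=1$. For a.e.\ $\rho$, the real function $\theta\mapsto|F_0(\rho e^{i\theta})|^2e^{-\varphi(\rho e^{i\theta})}$ then has all Fourier coefficients of positive index equal to zero, and by reality all nonzero-index coefficients vanish. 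Its mean is $e^{-\varphi(0)}$. Therefore $|F_0|^2e^{-\varphi}=e^{-\varphi(0)}$ a.e., so $\varphi=\varphi(0)+2\log|F_0|$ a.e., and hence everywhere, since both sides are subharmonic. The hypothesis $\varphi>-\infty$ then excludes zeros of $F_0$, and $\varphi$ is harmonic. (This step is special to one variable: on spheres in $\CC^n$ the mixed monomials $z^\alpha\bar z^\beta$ are not reached.)

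Your fallback to Guan--Mi's characterization of linearity is logically valid. However, it invokes a result that already contains the statement (the paper lists Guan--Mi's Theorem 1.11 as an alternative source for it), so it does not complete your own argument. For comparison, the paper does not reprove this theorem. The proof it points to, sketched in the proof of Lemma \ref{Lemma6}, argues by contradiction: it compares $\varphi$ with a Poisson modification $\widetilde\varphi\ge\varphi$ on a disc where $\varphi$ fails to be harmonic.
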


Among other things, its proof uses the fact that any $(1,1)$-current on $\DD\subset\CC$ is automatically closed. Therefore, the proof cannot be directly generalized to higher dimensions. In this article, we successfully extend Theorem \ref{Thm:Harm} to general dimensions, and thereby obtain a characterization of pluriharmonic functions via a Suita-type equality.

\begin{theorem} \label{Thm:CharPH}
Let $\varphi>-\infty$ be a plurisubharmonic function on $\BB^n$. Then
\begin{equation*}
K_{\BB^n}(0;e^{-\varphi}) = \frac{n!}{\pi^n} e^{\varphi(0)}
\end{equation*}
if and only if $\varphi$ is pluriharmonic on $\BB^n$.
\end{theorem}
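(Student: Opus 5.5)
The ``if'' direction is already covered by the discussion after \eqref{Eq:KerLB}: if $\varphi=2\Re u$ with $u\in\calO(\BB^n)$, then $K_{\BB^n}(0;e^{-\varphi})=K_{\BB^n}(0)|e^{u(0)}|^2=\frac{n!}{\pi^n}e^{\varphi(0)}$. The substance is the converse, and I would reduce it to Theorem \ref{Thm:Harm} by slicing through the origin.

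\textbf{Step 1: integration in polar form.} Write $z=\zeta w$ with $w\in\Sp^{2n-1}$ and $\zeta\in\DD$. For $f\in\calO(\BB^n)$,
\[
\int_{\BB^n}|f|^2e^{-\varphi}\,d\lambda=\frac{1}{2\pi}\int_{\Sp^{2n-1}}\int_{\DD}|f(\zeta w)|^2e^{-\varphi(\zeta w)}|\zeta|^{2n-2}\,d\lambda(\zeta)\,d\sigma(w).
\]
Each slice $\varphi_w(\zeta)=\varphi(\zeta w)$ is subharmonic on $\DD$. Since $\varphi$ is plurisubharmonic and not identically $-\infty$, we have $\varphi_w\not\equiv-\infty$ for almost every $w$; the hypothesis $\varphi>-\infty$ in fact gives this for every $w$.

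\textbf{Step 2: a one-variable sharp bound with the weight $|\zeta|^{2n-2}$.} The one-dimensional optimal extension from the origin, with weight $\varphi_w+(2n-2)\log|\zeta|$, should give the following. For every $w$ there is $g_w\in\calO(\DD)$ with $g_w(0)=1$ and
\[
\int_\DD|g_w|^2e^{-\varphi_w}|\zeta|^{2n-2}\,d\lambda\le\frac{\pi}{n}e^{-\varphi_w(0)},
\]
where $\pi/n$ is exactly the value attained when $\varphi_w$ is harmonic. Averaging over the sphere, whose area is $2\pi^n/(n-1)!$, reproduces $\pi^n/n!$.

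This step requires a precise minimal-integral identity, and here I would use the concavity of minimal $L^2$ integrals (Guan's framework) rather than the bare extension theorem. Define
\[
G(t)=\inf\Bigl\{\int_{\{|z|<e^{-t/2}\}}|f|^2e^{-\varphi}\,d\lambda : f(0)=1\Bigr\}.
\]
Then $G(-\log r)$ is concave in $r^{2n}$-type variables. Equality in \eqref{Eq:KerLB} forces $G$ to be linear, i.e.\ $G(t)=\frac{\pi^n}{n!}e^{-\varphi(0)}e^{-nt}$. For $n=1$ this linearity characterization is exactly the mechanism behind Theorem \ref{Thm:Harm}.

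\textbf{Step 3: extracting the slice information.} Linearity of $G$ should yield a unique minimizer $F$ with $F(0)=1$ satisfying
\[
\int_{|z|<r}|F|^2e^{-\varphi}\,d\lambda=\frac{\pi^n r^{2n}}{n!}e^{-\varphi(0)}\quad\text{for all } r.
\]
From this I would deduce two things. First, $\log|F|^2-\varphi$ has sub-mean-value behavior on spheres. Second, using plurisubharmonicity and Jensen-type inequalities on slices, for almost every $w$ the slice $\psi_w=\varphi_w-\log|F(\zeta w)|^2$ satisfies the equality of Theorem \ref{Thm:Harm} (in its $|\zeta|^{2n-2}$-weighted form). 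This would give that $\varphi_w$ is harmonic for almost every $w$, and then for all $w$ by upper semicontinuity together with $\varphi>-\infty$.

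\textbf{Step 4: the main obstacle.} Harmonicity of every complex line through $0$ does not by itself imply pluriharmonicity; this is exactly the Forelli-type issue raised in the abstract. The extra input is that $\varphi$ is plurisubharmonic, so $\ddbar\varphi\ge0$ is a positive closed current. I would expand $\varphi$ in homogeneous components on spheres, $\varphi(rw)=\sum_{p,q}a_{p,q}(r,w)$. Harmonicity on every slice through $0$ forces all mixed $(p,q)$ terms with $p,q\ge1$ to vanish in a suitable sense, so the trace of $\ddbar\varphi$ against $\ddbar|z|^2$ integrates to zero over balls. Concretely, the Laplacian mass $\int_{|z|<r}\Delta\varphi$ is determined by the spherical means, and harmonic slices give constant spherical means, so this mass vanishes. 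Since $\ddbar\varphi\ge0$ and its trace measure is zero, $\ddbar\varphi=0$, i.e.\ $\varphi$ is pluriharmonic.

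This last computation, showing that the spherical means $\int_{\Sp^{2n-1}}\varphi(rw)\,d\sigma$ are constant in $r$, follows directly once every slice is harmonic. I expect the genuinely hard part to be Step 3: transferring the global equality to equality on almost every slice.
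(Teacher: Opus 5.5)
\textbf{The gap is in Step 3.} Several parts of your plan are sound. Your Step 2 (concavity plus the optimal bound on every smaller ball forces linearity), together with the rigidity you invoke at the start of Step 3 (a single $F=F_0$ minimizes on every ball $\BB^n(0,r)$), is exactly the paper's Lemma \ref{Lemma1}. Your Step 4 is correct and more elementary than the paper's last step. If almost every slice through $0$ is harmonic, the spherical means of $\varphi$ are constant. Then the Riesz measure $\Delta\varphi$ vanishes, and the positive current $\ddbar\varphi$, having zero trace, vanishes. This even proves Theorem \ref{Thm:Forelli-Sim} directly. Your heuristic about mixed $(p,q)$ terms vanishing is not right as stated (cf.\ \eqref{Eq:CE}), but you do not need it.

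Step 3 is where the whole difficulty lies, and the mechanism you sketch cannot close it. Polar slicing only gives the averaged identity $\frac{1}{2\pi}\int_{\Sp^{2n-1}}A_w\,d\sigma(w)=\frac{\pi^n}{n!}e^{-\varphi(0)}$, where $A_w=\int_\DD|F(\zeta w)|^2|\zeta|^{2n-2}e^{-\varphi_w}d\lambda$. Step 2 bounds the slice minimum $m_w$ (which satisfies $m_w\le A_w$) only from \emph{above} by $\frac{\pi}{n}e^{-\varphi(0)}$. Equality on a slice needs the reverse inequality $m_w\ge\frac{\pi}{n}e^{-\varphi(0)}$. That means you must turn a better competitor on one line into a better holomorphic competitor on the ball.

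Nothing in your sketch does this:
\begin{itemize}
\item The one-variable minimizers $g_w$ have no reason to glue into a holomorphic function on $\BB^n$.
\item Jensen's inequality on spheres only yields that the spherical mean of $\varphi-\log|F|^2$ is at least $\varphi(0)$. This is an inequality for an average and says nothing about individual slices.
\item Even slice equality for the weight $|\zeta|^{2n-2}e^{-\varphi_w}$ would not let you apply Theorem \ref{Thm:Harm}, because $\varphi_w-(2n-2)\log|\zeta|$ is not subharmonic. A rigidity statement for that degenerate weight would itself have to be proved.
\end{itemize}

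\textbf{The missing idea} is the paper's transfer mechanism (Lemmas \ref{Lemma2}--\ref{Lemma4}). It works for \emph{every} $\xi$ and uses the Fubini weight $(r^2-|z_1|^2)^{n-1}$ produced by an optimal extension from a line, not the polar weight. Take $\xi=(1,0,\dots,0)$ and $0<r\le1$.
\begin{itemize}
\item Corollary \ref{Cor:OptExt3} (gain $(1-e^t)^{n-1}e^t$) gives a one-variable minimizer $\hat g$ for the weight $(r^2-|w|^2)^{n-1}e^{-\varphi_\xi}$ with value at most $\frac{\pi}{n}r^{2n}e^{-\varphi(0)}$.
\item Corollary \ref{Cor:OptExt2} (extension from the disc $\{z''=0\}$ with $\psi=(n-1)\log\frac{|z''|^2}{r^2-|z_1|^2}$) extends $\hat g$ to $G$ on $\BB^n(0,r)$ with $\int|G|^2e^{-\varphi}\le\frac{\pi^n}{n!}r^{2n}e^{-\varphi(0)}=I(2n\log r)$.
\end{itemize}
This is precisely the lower bound you are missing. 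All inequalities become equalities, and $G=F_0$ by uniqueness of the minimizer. So $F_0(\cdot\,\xi)$ is the weighted slice minimizer, with value $\frac{\pi}{n}r^{2n}e^{-\varphi(0)}$, for all $r$.

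The Volterra-type Corollary \ref{Cor:RA}, applied in the variable $r$, then removes the weight $(r^2-|w|^2)^{n-1}$ from both the value and the orthogonality relations. This gives $K_\DD(0;e^{-\varphi_\xi})=\frac1\pi e^{\varphi_\xi(0)}$ for the unweighted subharmonic $\varphi_\xi$. Theorem \ref{Thm:Harm} then applies on every slice, and your Step 4 (or the paper's final lemma) finishes the proof.
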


Key ingredients of the proof include Guan-Zhou's \cite{GuanZhou15} optimal $L^2$ extension theorem (which we need to apply in three different settings), Guan's \cite{Guan19} concavity property of minimal $L^2$ integrals, and Xu-Zhou's \cite{XuZhou24} necessary condition for the linearity of minimal $L^2$ integrals. In particular, we develop a mechanism that transfers the condition on $\BB^n$ to each affine disc $\xi\DD$ passing through the origin, and then apply Theorem \ref{Thm:Harm}. Of course, a function that is harmonic on each slice $\xi\DD$ is not necessarily pluriharmonic (see the counterexample \eqref{Eq:CE} below). We use a rigidity property for minimal $L^2$ integrals to upgrade the slicewise harmonicity to pluriharmonicity.

From the viewpoint of its proof, Theorem \ref{Thm:CharPH} is closely connected with the classical Forelli theorem. Let $\varphi$ be a real-valued function on $\BB^n$. Forelli \cite{Forelli} proved that if every slice of $\varphi$ through the origin is harmonic and if $\varphi$ is smooth at the origin, then $\varphi$ is pluriharmonic on $\BB^n$. As an application of Theorem \ref{Thm:CharPH}, we prove a Forelli-type result by replacing the smoothness assumption with plurisubharmonicity.

\begin{theorem} \label{Thm:Forelli-type}
Let $\varphi$ be a real-valued function on $\BB^n$. Assume that the slice $w\mapsto\varphi(w\xi)$ is harmonic on $\DD$ for every $\xi\in\Sp^{2n-1}$. If $\varphi$ is plurisubharmonic in a neighborhood of $0$, then $\varphi$ is pluriharmonic on $\BB^n$.
\end{theorem}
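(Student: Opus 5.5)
The plan is to reduce to Theorem \ref{Thm:CharPH} on a small ball around the origin, where $\varphi$ is plurisubharmonic, and then to use Forelli's classical theorem to pass from the small ball to all of $\BB^n$. Fix $r\in(0,1)$ such that $\varphi$ is plurisubharmonic on a neighborhood of $\overline{r\BB^n}$. Put $\varphi_r(z)=\varphi(rz)$. Then $\varphi_r$ is plurisubharmonic on $\BB^n$, and $\varphi_r>-\infty$ because $\varphi$ is real-valued. Moreover, for every $\xi\in\Sp^{2n-1}$ the slice $w\mapsto\varphi_r(w\xi)=\varphi(rw\xi)$ is harmonic on a neighborhood of $\overline{\DD}$. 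By \eqref{Eq:KerLB} we already have $K_{\BB^n}(0;e^{-\varphi_r})\geq \frac{n!}{\pi^n}e^{\varphi_r(0)}$. So the first step is to prove the reverse inequality using only the slicewise harmonicity.

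For the reverse inequality, take any $f\in A^2(\BB^n;e^{-\varphi_r})$ with $f(0)=1$. Since $\varphi_r$ is plurisubharmonic, hence locally bounded above and measurable, polar coordinates and Tonelli's theorem are legitimate. They give
\begin{equation*}
\int_{\BB^n}|f|^2e^{-\varphi_r}\,d\lambda=\frac{1}{2\pi}\int_{\Sp^{2n-1}}\int_{\DD}|f(w\xi)|^2e^{-\varphi_r(w\xi)}|w|^{2n-2}\,d\lambda(w)\,d\sigma(\xi).
\end{equation*}
On each slice, harmonicity on the simply connected disc lets us write $\varphi_r(w\xi)=2\Re u_\xi(w)$ with $u_\xi\in\calO(\DD)$. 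Then $w\mapsto|f(w\xi)e^{-u_\xi(w)}|^2$ is subharmonic and takes the value $e^{-\varphi(0)}$ at $w=0$. Applying the sub-mean value property on circles $|w|=t$ and integrating against the radial weight $t^{2n-1}\,dt$ shows that the inner integral is at least $e^{-\varphi(0)}\int_{\DD}|w|^{2n-2}d\lambda(w)$. Integrating over $\xi$ then gives $\int_{\BB^n}|f|^2e^{-\varphi_r}d\lambda\geq \frac{\pi^n}{n!}e^{-\varphi_r(0)}$, because the same polar computation with $f\equiv1$, $\varphi\equiv0$ recovers the volume of $\BB^n$. Consequently $K_{\BB^n}(0;e^{-\varphi_r})\leq\frac{n!}{\pi^n}e^{\varphi_r(0)}$, so equality holds. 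Theorem \ref{Thm:CharPH} now shows that $\varphi_r$ is pluriharmonic on $\BB^n$, i.e. $\varphi$ is pluriharmonic on $r\BB^n$.

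For the last step, a pluriharmonic function is real-analytic, so $\varphi$ is smooth in a neighborhood of $0$. Since every slice through the origin is harmonic on $\DD$ by hypothesis, Forelli's theorem \cite{Forelli} applies and shows that $\varphi$ is pluriharmonic on all of $\BB^n$.

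If one prefers a self-contained argument here, one can instead write $\varphi=2\Re h$ on $r\BB^n$ with $h=\sum_k h_k$, where the $h_k$ are homogeneous polynomials. Each slice harmonic function $\varphi(w\xi)$ has the expansion $2\Re\sum_k h_k(\xi)w^k$, convergent on $\DD$ by uniqueness of harmonic continuation. The standard Forelli/Hartogs-type estimate on homogeneous expansions would then show that $\sum h_k$ converges locally uniformly on $\BB^n$.

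I expect the main obstacle to be the measure-theoretic bookkeeping in the reverse inequality, namely justifying the polar-coordinate decomposition and the sub-mean value argument. This is exactly why I work on the rescaled ball, where plurisubharmonicity supplies measurability and local upper bounds. The rigidity itself is entirely supplied by Theorem \ref{Thm:CharPH}.
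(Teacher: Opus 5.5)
Your proof is correct and follows the paper's route. You rescale to a ball on which $\varphi$ is plurisubharmonic, use the slicewise sub-mean-value inequality to force equality in \eqref{Eq:KerLB}, and invoke Theorem \ref{Thm:CharPH}, which is exactly Theorem \ref{Thm:Forelli-Sim} plus scaling; you then extend from the small ball. The only difference is that you cite Forelli's theorem directly for that last step, which is legitimate because pluriharmonicity near $0$ gives smoothness there, whereas the paper reruns Forelli's homogeneous-expansion and Hartogs-lemma argument to allow an exceptional null set of directions.
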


This may be viewed as a generalization of Forelli's theorem in a direction distinct from that of other authors (see, for example, Joo-Kim-Schmalz \cite{JKS13, JKS16}). There is a classical counterexample (see \cite[\S 4.4.8]{Rudin}) showing that the plurisubharmonicity near $0$ is necessary: the function
\begin{equation} \label{Eq:CE}
	\varphi(z) = \frac{z_1^4\overline{z_2}+\overline{z_1}^4z_2}{|z|^2}
\end{equation}
is $C^2$-smooth on $\BB^2$; each slice $w\mapsto\varphi(w\xi)$ is harmonic, yet $\varphi(z)$ is not pluriharmonic. A direct computation gives
\begin{equation*}
	\det\left( \frac{\pd^2\varphi}{\pd z_j\pd\overline{z_k}} \right) = -\frac{9|z_1|^{10}}{|z|^8},
\end{equation*}
which shows that $\varphi(z)$ is not plurisubharmonic near $0$.

There has been increasing interest in $p$-Bergman theory in recent years (see, for example, \cite{NZZ16, ChenZhang} and the references therein). Let $\varphi$ be an upper semi-continuous function on a bounded domain $\Omega\subset\CC^n$. For any constant $p>0$, we define the weighted \textit{$p$-Bergman space} of $\Omega$ by
\begin{equation*}
A^p(\Omega;e^{-\varphi}) = \left\{f\in\calO(\Omega): \int_\Omega |f|^pe^{-\varphi} d\lambda < +\infty \right\},
\end{equation*}
and the weighted \textit{$p$-Bergman kernel} of $\Omega$ by
\begin{equation*}
	K_{\Omega,p}(z;e^{-\varphi}) = \sup\left\{ |f(z)|^p : f\in A^p(\Omega;e^{-\varphi}), \int_\Omega |f|^pe^{-\varphi} d\lambda \leq 1 \right\}, \quad z\in\Omega.
\end{equation*}

Now we consider the case where $0<p\leq2$ and $\varphi>-\infty$ is a plurisubharmonic function on $\BB^n$. According to Guan-Zhou's \cite{GuanZhou15} optimal $L^p$ extension theorem (see Theorem \ref{Thm:OptLpExt}), there exists a holomorphic function $f$ on $\BB^n$ such that $f(0)=1$ and
\begin{equation*}
\int_{\BB^n} |f|^pe^{-\varphi} d\lambda \leq \frac{\pi^n}{n!} e^{-\varphi(0)}.
\end{equation*}
Consequently,
\begin{equation*}
K_{\BB^n,p}(0;e^{-\varphi}) \geq \frac{n!}{\pi^n} e^{\varphi(0)}.
\end{equation*}
It is well-known that $\BB^n$ is homogeneous, i.e., for any $z,w\in\BB^n$, there exists a holomorphic automorphism $\Phi$ of $\BB^n$ such that $\Phi(z)=w$. Using the transformation rule for weighted $p$-Bergman kernels, the above inequality becomes
\begin{equation} \label{Eq:pKerLB}
	K_{\BB^n,p}(z;e^{-\varphi}) \geq \frac{n!}{\pi^n} \frac{e^{\varphi(z)}}{(1-|z|^2)^{n+1}}, \quad z\in\BB^n.
\end{equation}

The inequality \eqref{Eq:pKerLB} is equivalent to an optimal $L^p$ extension result from $z\in\BB^n$ to $\BB^n$ (see Theorem \ref{Thm:OptLpExt-2}). As in the case of $p=2$, it is also a natural question to ask when equality holds in \eqref{Eq:pKerLB}. Based on Theorem \ref{Thm:CharPH}, we have the following characterization of the equality case: the equality in \eqref{Eq:pKerLB} holds at some point if and only if $\varphi$ is pluriharmonic on $\BB^n$.

\begin{theorem} \label{Thm:CharPH-pKer}
Let $\varphi>-\infty$ be a plurisubharmonic function on $\BB^n$ and $0<p\leq 2$ a constant. Then
\begin{equation*}
	K_{\BB^n,p}(z;e^{-\varphi}) \geq \frac{n!}{\pi^n} \frac{e^{\varphi(z)}}{(1-|z|^2)^{n+1}}, \quad z\in\BB^n,
\end{equation*}
and equality holds at some point if and only if $\varphi$ is pluriharmonic on $\BB^n$.
\end{theorem}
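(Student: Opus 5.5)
The plan is to reduce everything to the origin and then to the $p=2$ case, Theorem \ref{Thm:CharPH}.

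\emph{Step 1: reduction to $z=0$.} Fix $z\in\BB^n$ and choose an automorphism $\Phi$ of $\BB^n$ with $\Phi(0)=z$. For $f\in\calO(\BB^n)$, put $g=(f\circ\Phi)\,(J_\Phi)^{2/p}$. This is well defined because $J_\Phi$ is nonvanishing on the simply connected ball, so a holomorphic branch of its power exists. The change of variables gives
\begin{equation*}
\int_{\BB^n}|f|^pe^{-\varphi}d\lambda=\int_{\BB^n}|g|^pe^{-\varphi\circ\Phi}d\lambda,
\end{equation*}
so $K_{\BB^n,p}(z;e^{-\varphi})=K_{\BB^n,p}(0;e^{-\varphi\circ\Phi})\,|J_\Phi(0)|^{-2}$. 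Moreover $|J_\Phi(0)|^2=(1-|z|^2)^{n+1}$, and $\varphi\circ\Phi$ is plurisubharmonic, and pluriharmonic exactly when $\varphi$ is. So the inequality follows from the $z=0$ case, which is the optimal $L^p$ extension bound already recorded. The equality question also reduces to $z=0$: we must show that $K_{\BB^n,p}(0;e^{-\varphi})=\frac{n!}{\pi^n}e^{\varphi(0)}$ forces $\varphi$ to be pluriharmonic. The converse is easy. If $\varphi=2\Re u$, replace $f$ by $fe^{-2u/p}$; this reduces the weight to $1$. The subharmonic mean value inequality $|h(0)|^p\le\frac{n!}{\pi^n}\int|h|^p$ then gives the unweighted kernel $n!/\pi^n$, which is equality.

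\emph{Step 2: from $p$ to $2$.} Assume equality at $0$ for some $0<p\le2$. Take the extension $f$ given by Theorem \ref{Thm:OptLpExt}: $f(0)=1$ and $\int|f|^pe^{-\varphi}\le\frac{\pi^n}{n!}e^{-\varphi(0)}$. Equality of the kernel means that every competitor $h$ with $h(0)=1$ satisfies $\int|h|^pe^{-\varphi}\ge\frac{\pi^n}{n!}e^{-\varphi(0)}$. Now set $\psi=\varphi-(2-p)\log|f|$, which is plurisubharmonic only if we flip the sign, so instead use the standard trick from Guan--Zhou's proof of the $L^p$ theorem: $\tilde\varphi:=\varphi+(2-p)\log|f|$ is plurisubharmonic with $\tilde\varphi(0)=\varphi(0)$. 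For any $F\in\calO(\BB^n)$ with $F(0)=1$, Hölder's inequality with exponents $2/p$ and $2/(2-p)$ gives
\begin{equation*}
\int|Ff|^{p/2}\cdot\ldots
\end{equation*}
In practice I would do the following. By Theorem \ref{Thm:CharPH} applied to $\tilde\varphi$, pick $F$ with $F(0)=1$ and $\int|F|^2e^{-\tilde\varphi}\le\frac{\pi^n}{n!}e^{-\varphi(0)}$. Then let $h=F^{p/2}f^{1-p/2}$ if roots exist, or more cleanly work with $h:=Ff$ and compare via Hölder:
\begin{equation*}
\int|h|^{p}\ldots\le\Big(\int|F|^2e^{-\varphi}|f|^{p-2}\Big)^{p/2}\Big(\int|f|^pe^{-\varphi}\Big)^{1-p/2}.
\end{equation*}
The exponent bookkeeping will be arranged so that $h(0)=1$ and both factors are at most $\frac{\pi^n}{n!}e^{-\varphi(0)}$. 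The minimality of the $p$-integral then forces $\int|F|^2e^{-\tilde\varphi}=\frac{\pi^n}{n!}e^{-\varphi(0)}$ for the minimizer, that is, $K_{\BB^n}(0;e^{-\tilde\varphi})=\frac{n!}{\pi^n}e^{\tilde\varphi(0)}$. Theorem \ref{Thm:CharPH} then makes $\tilde\varphi$ pluriharmonic.

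\emph{Step 3: conclusion, and the main obstacle.} It remains to pass from pluriharmonicity of $\tilde\varphi=\varphi+(2-p)\log|f|$ to that of $\varphi$. Both summands are plurisubharmonic, so $\ddbar\varphi\le\ddbar\tilde\varphi=0$ and $\ddbar\varphi\ge0$. Hence $\varphi$ is pluriharmonic, and incidentally $f$ has no zeros when $p<2$. The main obstacle is Step 2. Getting the Hölder exponents to match so that equality transfers exactly is delicate, and handling zeros of $f$ (roots of $f$) is part of that. I would first try choosing $f$ as an actual minimizer of the $p$-problem (which exists by normal families and Fatou's lemma) and use $h=F f^{1-p/2}$ after showing that $f$ is zero-free by a variational argument.
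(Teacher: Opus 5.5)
Your Step 1 matches the paper's reduction. In Step 2 the H\"older bookkeeping is actually immediate: no roots are needed, because the competitor is $F$ itself,
\begin{equation*}
|F|^pe^{-\varphi}=\big(|f|^pe^{-\varphi}\big)^{\frac{2-p}{2}}\big(|F|^2|f|^{p-2}e^{-\varphi}\big)^{\frac{p}{2}}.
\end{equation*}
So $p$-equality together with Corollary \ref{Cor:OptExt1} (which only needs $\tilde\varphi(0)=\varphi(0)>-\infty$) does give $K_{\BB^n}(0;e^{-\tilde\varphi})=\frac{n!}{\pi^n}e^{\tilde\varphi(0)}$ for $\tilde\varphi=\varphi+(2-p)\log|f|$.

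The genuine gap is the next step. Theorem \ref{Thm:CharPH} requires $\tilde\varphi>-\infty$ on all of $\BB^n$. But when $p<2$, $\tilde\varphi=-\infty$ at every zero of $f$, and nothing in Theorem \ref{Thm:OptLpExt} rules out zeros. This hypothesis is essential. Take $0<|a|<1$ and $\phi(z)=2\log|z_1-a|$. Every $F\in A^2(\BB^n;e^{-\phi})$ must vanish on $\BB^n\cap\{z_1=a\}$, so writing $F=(z_1-a)G$ gives
\begin{equation*}
K_{\BB^n}(0;e^{-\phi})=|a|^2\,\frac{n!}{\pi^n}=\frac{n!}{\pi^n}e^{\phi(0)},
\end{equation*}
although $\phi$ is not pluriharmonic. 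Hence your Step 3 remark that $f$ is zero-free is circular: zero-freeness is exactly what you would need before invoking Theorem \ref{Thm:CharPH}. Your proposed repair, proving by a variational argument that the $p$-minimizer is zero-free, has no supporting mechanism. Minimality gives no control over zeros in general, and here zero-freeness is known only a posteriori, once $\varphi$ is shown to be pluriharmonic.

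The missing idea is a version of Theorem \ref{Thm:CharPH} for weights $\varphi+c\log|h|$, with $\varphi>-\infty$ plurisubharmonic and $h(0)=1$, whose conclusion concerns $\varphi$ alone; this is the paper's Lemma \ref{Lemma7}. It is proved in three steps.
\begin{enumerate}
\item Lemmas \ref{Lemma1}--\ref{Lemma4} use only finiteness of the weight at $0$, so they still yield $K_{\DD}(0;e^{-\varphi_\xi-c\log|h_\xi|})=\frac1\pi e^{\varphi_\xi(0)}$ on every slice.
\item A one-variable result, Lemma \ref{Lemma6}, then makes each $\varphi_\xi$ harmonic. It comes from Guan--Mi, or from Liu--Xu's argument applied after replacing $\varphi$ by a larger subharmonic function on a small disc.
\item Theorem \ref{Thm:Forelli-Sim} upgrades slicewise harmonicity to pluriharmonicity of $\varphi$.
\end{enumerate}
With this lemma your Step 2 closes directly and Step 3 becomes unnecessary; the result is exactly the paper's H\"older argument read contrapositively. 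For $p=2/m$ the paper avoids all of this via $f\mapsto f^m$.
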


The rest of the article is structured as follows. In Section \ref{Sec2}, we recall some preparatory results. In Sections \ref{Sec3}, \ref{Sec4} and \ref{Sec5}, we prove Theorems \ref{Thm:CharPH}, \ref{Thm:Forelli-type} and \ref{Thm:CharPH-pKer}, respectively. \\

\textbf{Acknowledgements.} The author would like to thank Prof. Xiangyu Zhou for introducing him to the $L^2$ extension theory and for his unwavering encouragement and support over the years.

\textbf{Use of artificial intelligence.} The author employed AI (\verb|deepseek-v4-pro|) for language polishing and checking the arguments. All mathematical contents were developed solely by the author.

\section{Preliminaries} \label{Sec2}

\subsection{Optimal $L^2$ Extension Theorems}

We recall a simplified version of Guan-Zhou's \cite{GuanZhou15} optimal $L^2$ extension theorem with gain, in the case of holomorphic functions on pseudoconvex domains.

\begin{definition}
We call a smooth positive function $c(t)>0$ on $(-\infty,0)$ a \textit{gain} if the following conditions are satisfied:
\begin{gather}
\label{Eq:Cond1} \varliminf\limits_{t\to-\infty}c(t)e^{-t}>0, \\
\label{Eq:Cond2} \int_{-\infty}^0c(t)dt<+\infty, \\
\label{Eq:Cond3}
\left(\int_t^0c(\tau)d\tau\right)^2 > c(t)\int_t^0 \left(\int_{\tau_1}^0c(\tau_2)d\tau_2\right) d\tau_1, \quad \forall t<0.
\end{gather}
\end{definition}

\begin{remark} \label{Rmk:Gain}
Let $c(t)>0$ be a smooth function on $(-\infty,0)$ satisfying \eqref{Eq:Cond1} and \eqref{Eq:Cond2}.
If $c(t)$ is increasing, then inequality \eqref{Eq:Cond3} holds. Moreover, if there exists some $t_0<0$ such that $c'\geq0$ on $(-\infty,t_0)$, but $c'<0$ and $(\log c)''<0$ on $(t_0,0)$, then inequality \eqref{Eq:Cond3} also holds (see \cite[Remark 4.12]{GuanZhou15} for details).
\end{remark}

\begin{theorem}[Guan-Zhou] \label{Thm:OptExt}
Let $\Omega$ be a pseudoconvex domain in $\CC_z^n=\CC_{z'}^{n-k}\times\CC_{z''}^k$ and $\phi$ a plurisubharmonic function on $\Omega$. Assume that there exists a plurisubharmonic function $\psi<0$ on $\Omega$ such that $\rho(z):=\psi(z)-2k\log|z''|$ is continuous on $\Omega$ and smooth outside $S:=\Omega\cap\{z''=0\}$. Let $c(t)>0$ be a smooth gain on $(-\infty,0)$.
Then for any holomorphic function $f$ on $S$ satisfying
\begin{equation*}
\int_S|f|^2e^{-\phi-\rho}d\lambda< +\infty,
\end{equation*}
there exists a holomorphic function $F$ on $\Omega$ such that $F|_S=f$ and
\begin{equation*}
\int_\Omega|F|^2c(\psi)e^{-\psi-\phi}d\lambda \leq \frac{\pi^k}{k!} \int_{-\infty}^0c(t)dt \int_S|f|^2e^{-\phi-\rho}d\lambda.
\end{equation*}
\end{theorem}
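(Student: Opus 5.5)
This result is quoted from Guan--Zhou \cite{GuanZhou15}. If I had to prove it, I would follow the $L^2$ method with undetermined auxiliary functions. The argument combines a twisted Bochner--Kodaira--H\"ormander estimate, a cut-off of the neighborhood of $S$ measured by $\psi$, and a limiting argument.

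\textbf{Reductions.} First I reduce to a nice situation by standard approximation.
\begin{itemize}
\item Exhaust $\Omega$ by relatively compact smooth strongly pseudoconvex domains $\Omega_j$.
\item On each $\Omega_j$, approximate $\phi$ by smooth plurisubharmonic functions decreasing to $\phi$, and regularize $\psi$ in the same way, keeping $\psi<0$ and the structure $\psi=\rho+2k\log|z''|$ near $S$.
\item Since $\Omega$ is pseudoconvex, Cartan's Theorem B gives a holomorphic extension $\tilde f\in\calO(\Omega)$ of $f$. It carries no $L^2$ control, but it is bounded on each $\Omega_j$.
\end{itemize}
If I obtain the estimate on each $\Omega_j$ with a constant independent of $j$, then weak compactness in $L^2_{loc}$ and Montel's theorem yield the desired $F$ on $\Omega$, with $F|_S=f$ preserved in the limit.

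\textbf{Main step on a fixed $\Omega_j$.} Let $v_\epsilon$ be a smooth convex increasing regularization of $\max(t,\log\epsilon)$ or a similar function, and let $\theta_\epsilon(\psi)$ be a cut-off supported in $\{\psi<\log\epsilon\}$ (roughly). I look for $F_\epsilon=\theta_\epsilon(\psi)\tilde f-u_\epsilon$, where $u_\epsilon$ solves
\[
\bar\partial u_\epsilon=\tilde f\,\theta_\epsilon'(\psi)\bar\partial\psi .
\]
I solve this with the twisted estimate, using weight $\phi+\psi+h(-v_\epsilon(\psi))$ and twisting factors $\eta=s(-v_\epsilon(\psi))$, $\lambda=\ldots$. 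The functions $s,h$ are to be determined from an ODE system of the form
\[
s'-sh'=1,\qquad \Big(s+\frac{s'^2}{h''s-s''}\Big)e^{h-t}=\int_t^0c(\tau)\,d\tau ,
\]
which is arranged so that the resulting weight on $|u_\epsilon|^2$ is exactly $c(\psi)e^{-\psi-\phi}$.

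\textbf{The main obstacle} is solving this ODE system explicitly with $s>0$ and $h''s-s''>0$. I would first try the substitution $s=\big(\int_t^0 c\big)^{-1}\int_t^0\!\int_{\tau_1}^0 c$ (up to normalization), which makes $h$ explicit. I would then check that the required positivity is precisely the gain inequality \eqref{Eq:Cond3}, with \eqref{Eq:Cond2} guaranteeing finiteness.

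\textbf{Passage to the limit $\epsilon\to0$.} With the ODE solved, the $L^2$ estimate bounds $\int|F_\epsilon|^2c(\psi)e^{-\psi-\phi}$ by
\[
\Big(\int_{-\infty}^0 c\Big)\int|\tilde f|^2\,|\theta_\epsilon'(\psi)|\,e^{-\phi-\psi}.
\]
Write $e^{-\psi}=e^{-\rho}|z''|^{-2k}$ and use polar coordinates in $z''$; by continuity of $\rho$ and $\phi$ restricted appropriately (via upper semicontinuity plus decreasing approximations), the last integral tends to $\frac{\pi^k}{k!}\int_S|f|^2e^{-\phi-\rho}$. Condition \eqref{Eq:Cond1} gives $c(\psi)e^{-\psi}\gtrsim|z''|^{-2k}$ near $S$, which is non-integrable, so finiteness forces $u_\epsilon|_S=0$. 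Hence $F_\epsilon|_S=f$. A weak limit as $\epsilon\to0$ (and then as $j\to\infty$ and the regularization parameters tend to their limits) gives $F$ with the stated bound.
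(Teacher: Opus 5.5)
The paper does not prove this statement; it quotes it from Guan--Zhou \cite{GuanZhou15}. Your outline has the architecture of their proof: a twisted $\bar\partial$-estimate with twisting functions of $v_\epsilon(\psi)$, the cut-off $1-v_\epsilon'(\psi)$, an ODE whose solvability is exactly \eqref{Eq:Cond3}, and the Ohsawa-measure limit. However, the step that is supposed to give $F_\epsilon|_S=f$ fails as written.

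You assert that \eqref{Eq:Cond1} yields $c(\psi)e^{-\psi}\gtrsim|z''|^{-2k}$ near $S$. It only yields $c(\psi)e^{-\psi}\geq\delta>0$. For the gain $c(t)=e^t$ used in Corollaries \ref{Cor:OptExt1} and \ref{Cor:OptExt2}, one has $c(\psi)e^{-\psi}\equiv1$. So if the weight on $|u_\epsilon|^2$ really were ``exactly $c(\psi)e^{-\psi-\phi}$'' as you arrange, finiteness would not force $u_\epsilon|_S=0$, and the interpolation would be lost.

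The missing point is this. Since $\eta$ and $h$ are functions of $t=-v_\epsilon(\psi)$, the estimate one can actually arrange controls $\int|u_\epsilon|^2c(v_\epsilon(\psi))e^{-\psi-\phi}$. Near $S$, $v_\epsilon(\psi)$ is constant, so this weight is $\gtrsim e^{-\psi}=e^{-\rho}|z''|^{-2k}$. That is non-integrable transversally to $S$, and it is what forces $u_\epsilon|_S=0$. Only afterwards does one discard the factor $e^{v_\epsilon(\psi)-\psi}\geq1$. This gives the weight $c(v_\epsilon(\psi))e^{-v_\epsilon(\psi)-\phi}$, which tends to $c(\psi)e^{-\psi-\phi}$ as $\epsilon\to0$ (Fatou).

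The actual role of \eqref{Eq:Cond1} is different. It gives $\inf_{t\leq t_1}c(t)e^{-t}>0$ for each $t_1<0$, hence lower bounds on these weights on compacts, uniform in $\epsilon$. That is what supplies the local $L^2$ bounds for the Montel/weak-limit step.

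Relatedly, the second equation of your ODE system has the wrong right-hand side. Take $\eta=s(t)$ and weight $e^{-\phi-\psi-h(t)}$. The condition producing the weight $c(v_\epsilon(\psi))e^{-\psi-\phi}$ on $|u_\epsilon|^2$ is $\bigl(s+\frac{s'^2}{h''s-s''}\bigr)e^{h}=\frac{1}{c(-t)}$, together with $s'-sh'=1$.

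The function $\int_{-t}^0c$ is not the right-hand side; it is $e^{-h}$ for the solution. Put $D(t)=\int_{-t}^0c$ and $N(t)=\int_0^tD$, and take $s=N/D$ (your guess for $s$ is correct) and $h=-\log D$. Then $h''s-s''=c(-t)\bigl(D^2-c(-t)N\bigr)/D^3$, which is positive exactly by \eqref{Eq:Cond3}. Also $e^{-h}\leq\int_{-\infty}^0c<+\infty$ by \eqref{Eq:Cond2}, and this produces the constant in the final bound.

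Finally, $\psi$ should not be regularized at all. It is already smooth off $S$, and smoothing it would destroy the pole you need. One instead solves $\bar\partial$ on the complete K\"ahler manifold $\Omega_j\setminus S$ and extends the $L^2$ solution across $S$.
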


\begin{remark}
In the above theorem, $e^{-\rho}$ is the density of the Ohsawa measure associated to $\psi$ (see \cite{GuanZhou15} for details): for any compactly supported continuous function $g$ on $\Omega$, we have
\begin{equation*}
\lim_{t\to+\infty} \int_{\{-t-1<\psi<-t\}} ge^{-\psi} d\lambda = \lim_{a\to+\infty} e^a \int_{\{\psi<-a\}} g d\lambda = \frac{\pi^k}{k!} \int_S ge^{-\rho} d\lambda.
\end{equation*}
\end{remark}

Throughout this article, we denote a general disc in $\CC$ by
\begin{equation*}
	\DD(z_0,R) = \{z\in\CC: |z-z_0|<R\}
\end{equation*}
and a general ball in $\CC^n$ by
\begin{equation*}
	\BB^n(z_0,R) = \{z\in\CC^n: |z-z_0|<R\}.
\end{equation*}
Therefore, $\DD=\DD(0,1)$ and $\BB^n=\BB^n(0,1)$. Moreover, we denote by $\Sp^{2n-1}$ the unit sphere in $\CC^n$.

In the proof of Theorem \ref{Thm:CharPH}, we need three special cases of Theorem \ref{Thm:OptExt}.

\begin{corollary} \label{Cor:OptExt1}
Let $\phi$ be a plurisubharmonic function on $\BB^n(0,R)$ with $\phi(0)>-\infty$. Then there exists a holomorphic function $F$ on $\BB^n(0,R)$ such that $F(0)=1$ and
\begin{equation*}
	\int_{\BB^n(0,R)} |F|^2e^{-\phi} d\lambda \leq \frac{\pi^n}{n!} R^{2n} e^{-\phi(0)}.
\end{equation*}
\end{corollary}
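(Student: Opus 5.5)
We apply Theorem \ref{Thm:OptExt} with $k=n$, so that $\CC^{n-k}$ is trivial and the "submanifold" $S$ is the single point $\{0\}$. We take $\Omega=\BB^n(0,R)$, which is pseudoconvex, and we take the weight $\phi$ unchanged. For $\psi$ we choose
\begin{equation*}
\psi(z)=2n\log\frac{|z|}{R},
\end{equation*}
which is plurisubharmonic and negative on $\BB^n(0,R)$. Then
\begin{equation*}
\rho=\psi-2n\log|z|=-2n\log R
\end{equation*}
is a constant. In particular $\rho$ is continuous on $\Omega$ and smooth outside $S$, and $e^{-\rho}=R^{2n}$. For the function on $S$ we take $f=1$ at the origin. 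Since integration over the point $S=\{0\}$ means evaluation there, the integrability hypothesis reads $e^{-\phi(0)}R^{2n}<+\infty$, which holds because $\phi(0)>-\infty$.

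Next we choose a gain that removes the factor $c(\psi)e^{-\psi}$ on the left-hand side. The natural choice is $c(t)=e^{t}$, because then $c(\psi)e^{-\psi}\equiv1$. We check that this is admissible. Condition \eqref{Eq:Cond1} holds since $c(t)e^{-t}=1$. Condition \eqref{Eq:Cond2} holds with $\int_{-\infty}^0 e^t\,dt=1$. Condition \eqref{Eq:Cond3} holds by Remark \ref{Rmk:Gain}, because $c$ is smooth, positive and increasing. A direct check is also easy: the left side of \eqref{Eq:Cond3} is $(1-e^t)^2$, the right side is $e^t(-t-1+e^t)$, and the inequality $(1-e^t)^2>e^t(e^t-1-t)$ reduces to $1+te^t>e^t$, which is true for $t<0$.

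Theorem \ref{Thm:OptExt} now gives a holomorphic function $F$ on $\BB^n(0,R)$ with $F(0)=1$ and
\begin{equation*}
\int_{\BB^n(0,R)}|F|^2e^{-\phi}\,d\lambda\le \frac{\pi^n}{n!}\cdot 1\cdot R^{2n}e^{-\phi(0)},
\end{equation*}
which is exactly the claimed estimate.

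There is no real obstacle in this argument. The only points that need care are confirming that $\rho$ satisfies the continuity and smoothness hypothesis (it does, being constant) and checking the gain conditions for $c(t)=e^t$, both of which are done above.
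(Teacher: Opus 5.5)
Your proof is correct and matches the paper's argument exactly: you apply Theorem \ref{Thm:OptExt} with $k=n$, $\psi(z)=2n\log(|z|/R)$, constant $\rho=-2n\log R$, and the gain $c(t)=e^t$. The paper leaves condition \eqref{Eq:Cond3} to Remark \ref{Rmk:Gain}, while you also verify it directly by reducing it to $1+te^t>e^t$ for $t<0$, and that check is correct.
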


\begin{proof}
We take $\psi(z) = 2n\log\dfrac{|z|}{R}$, which is a negative plurisubharmonic function on $\BB^n(0,R)$. Clearly, $S=\{0\}$ and $\rho=-2n\log R$. The corollary then follows by applying Theorem \ref{Thm:OptExt} with the gain $c(t)=e^t$.
\end{proof}

\begin{corollary} \label{Cor:OptExt2}
Let $\phi$ be a plurisubharmonic function on $\BB^n(0,R)$, where $n\geq2$. For any holomorphic function $f$ on $\DD(0,R)$ satisfying
\begin{equation*}
\int_{\DD(0,R)} |f(z_1)|^2 (R^2-|z_1|^2)^{n-1} e^{-\phi(z_1,0,\cdots,0)} d\lambda < +\infty,
\end{equation*}
there exists a holomorphic function $F$ on $\BB^n(0,R)$ such that $F(z_1,0,\cdots,0)=f(z_1)$ on $\DD(0,R)$ and
\begin{equation*}
\int_{\BB^n(0,R)} |F|^2e^{-\phi} d\lambda \leq \frac{\pi^{n-1}}{(n-1)!} \int_{\DD(0,R)} |f(z_1)|^2 (R^2-|z_1|^2)^{n-1} e^{-\phi(z_1,0,\cdots,0)} d\lambda.
\end{equation*}
\end{corollary}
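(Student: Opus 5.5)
We apply Theorem \ref{Thm:OptExt} on $\Omega=\BB^n(0,R)$, splitting $\CC^n=\CC_{z'}\times\CC^{n-1}_{z''}$ with $z'=z_1$ and $z''=(z_2,\dots,z_n)$. Then $k=n-1$ and $S=\Omega\cap\{z''=0\}=\DD(0,R)\times\{0\}$. The weight $(R^2-|z_1|^2)^{n-1}$ in the hypothesis should be exactly $e^{-\rho}$, which suggests the choice
\begin{equation*}
\psi(z)=(n-1)\log\frac{|z''|^2}{R^2-|z_1|^2}.
\end{equation*}
Then $\rho=\psi-2(n-1)\log|z''|=-(n-1)\log(R^2-|z_1|^2)$, which is continuous on $\Omega$ and smooth everywhere (in particular outside $S$). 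On $\Omega$ we have $|z''|^2<R^2-|z_1|^2$, so $\psi<0$.

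We must check that $\psi$ is plurisubharmonic. Write $\psi=(n-1)\bigl(\log|z''|^2-\log(R^2-|z_1|^2)\bigr)$. The term $\log|z''|^2$ is plurisubharmonic. The term $-\log(R^2-|z_1|^2)$ is a subharmonic function of $z_1$ alone, since $R^2-|z_1|^2$ is the defining function of the disc and $-\log$ of it is the usual exhaustion; a direct check gives $\ddbar\bigl(-\log(R^2-|z_1|^2)\bigr)=R^2\,i\,dz_1\wedge d\bar z_1/(R^2-|z_1|^2)^2\geq0$. Hence $\psi$ is plurisubharmonic. The domain $\BB^n(0,R)$ is pseudoconvex, so all hypotheses of Theorem \ref{Thm:OptExt} hold.

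Take the gain $c(t)=e^t$, which is increasing, satisfies \eqref{Eq:Cond1} and \eqref{Eq:Cond2}, and so is admissible by Remark \ref{Rmk:Gain}. Then $c(\psi)e^{-\psi}\equiv1$ and $\int_{-\infty}^0e^t\,dt=1$. Theorem \ref{Thm:OptExt} with $\phi$ and $f$ gives $F\in\calO(\BB^n(0,R))$ with $F|_S=f$ and
\begin{equation*}
\int_{\BB^n(0,R)}|F|^2e^{-\phi}\,d\lambda\leq\frac{\pi^{n-1}}{(n-1)!}\int_{\DD(0,R)}|f(z_1)|^2(R^2-|z_1|^2)^{n-1}e^{-\phi(z_1,0,\dots,0)}\,d\lambda,
\end{equation*}
which is the claim. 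The only real work is choosing $\psi$ so that $\rho$ produces the weight $(R^2-|z_1|^2)^{n-1}$ while $\psi$ stays negative and plurisubharmonic. The verification above is routine, so I expect no serious obstacle.
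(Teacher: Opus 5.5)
Your proposal is correct and is essentially the paper's proof: the same $\psi=(n-1)\log\bigl(|z''|^2/(R^2-|z_1|^2)\bigr)$, the same $\rho$ and $S$, the same plurisubharmonicity check via $\partial_{z_1}\partial_{\bar z_1}\bigl(-\log(R^2-|z_1|^2)\bigr)=R^2/(R^2-|z_1|^2)^2$, and the gain $c(t)=e^t$ in Theorem \ref{Thm:OptExt}. You verify a few points the paper leaves implicit, namely $\psi<0$, the admissibility of $e^t$ as a gain, and $c(\psi)e^{-\psi}\equiv 1$.
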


\begin{proof}
We write the coordinates of $\CC^n$ as $(z_1,z'')$. Since
\begin{equation*}
\frac{\pd^2}{\pd z_1\pd\bar{z}_1} \Big(-\log(R^2-|z_1|^2)\Big) = \frac{R^2}{(R^2-|z_1|^2)^2} > 0,
\end{equation*}
we know $\psi(z) = (n-1)\log\dfrac{|z''|^2}{R^2-|z_1|^2}$ is a negative plurisubharmonic function on $\BB^n(0,R)$. We have $\rho=-\log(R^2-|z_1|^2)^{n-1}$ and $S=\{(z_1,0,\cdots,0):|z_1|<R\}$. Then the corollary is an application of Theorem \ref{Thm:OptExt} with the gain $c(t)=e^t$.
\end{proof}

\begin{corollary} \label{Cor:OptExt3}
Let $\phi$ be a subharmonic function on $\DD(0,R)$ with $\phi(0)>-\infty$. For any positive integer $n\geq2$, there exists a holomorphic function $f$ on $\DD(0,R)$ such that $f(0)=1$ and
$$ \int_{\DD(0,R)} |f(w)|^2 (R^2-|w|^2)^{n-1} e^{-\phi(w)} d\lambda \leq \frac{\pi}{n}R^{2n} e^{-\phi(0)}. $$
\end{corollary}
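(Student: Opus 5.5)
We apply Theorem \ref{Thm:OptExt} in the one-dimensional setting $\Omega=\DD(0,R)\subset\CC$, $k=1$, $S=\{0\}$, as in Corollary \ref{Cor:OptExt1}, but with a gain $c(t)$ other than $e^t$, chosen to produce the weight $(R^2-|w|^2)^{n-1}$. We take
\begin{equation*}
\psi(w)=2\log\frac{|w|}{R},
\end{equation*}
which is negative and subharmonic on $\DD(0,R)$. Here $\rho=\psi-2\log|w|=-2\log R$ is constant, hence continuous and smooth. For this $\psi$ we have $e^{\psi}=|w|^2/R^2$, so the weight we want is
\begin{equation*}
(R^2-|w|^2)^{n-1}=R^{2(n-1)}(1-e^{\psi})^{n-1}.
\end{equation*}
We therefore want $c(\psi)e^{-\psi}=(1-e^{\psi})^{n-1}$, that is,
\begin{equation*}
c(t)=e^t(1-e^t)^{n-1}.
\end{equation*}
With $\phi$ in place of the weight and $f\equiv1$ on $S=\{0\}$, Theorem \ref{Thm:OptExt} gives $F\in\calO(\DD(0,R))$ with $F(0)=1$ and
\begin{equation*}
\int_{\DD(0,R)}|F|^2(1-e^{\psi})^{n-1}e^{-\phi}\,d\lambda\le \pi\int_{-\infty}^0 c(t)\,dt\cdot R^2e^{-\phi(0)}.
\end{equation*}
Substituting $s=e^t$, we get $\int_{-\infty}^0 c\,dt=\int_0^1(1-s)^{n-1}ds=1/n$. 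Multiplying both sides by $R^{2(n-1)}$ then yields
\begin{equation*}
\int_{\DD(0,R)}|F|^2(R^2-|w|^2)^{n-1}e^{-\phi}\,d\lambda\le\frac{\pi}{n}R^{2n}e^{-\phi(0)},
\end{equation*}
which is the claimed bound with $f=F$.

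The main step is checking that $c$ is a gain. It is smooth and positive on $(-\infty,0)$. Condition \eqref{Eq:Cond1} holds because $c(t)e^{-t}=(1-e^t)^{n-1}\to1$ as $t\to-\infty$. Condition \eqref{Eq:Cond2} is the integral $1/n$ computed above. For \eqref{Eq:Cond3} we use Remark \ref{Rmk:Gain}. We compute
\begin{equation*}
(\log c)'=1-\frac{(n-1)e^t}{1-e^t},
\end{equation*}
so $c$ increases for $e^t<1/n$ and decreases afterwards. This gives $t_0=-\log n$. Moreover
\begin{equation*}
(\log c)''=-\frac{(n-1)e^t}{(1-e^t)^2}<0
\end{equation*}
for every $t<0$, because $n\ge2$; in particular this holds on $(t_0,0)$. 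The hypotheses of the second part of Remark \ref{Rmk:Gain} are therefore satisfied, and \eqref{Eq:Cond3} holds.

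A small technical point is that Theorem \ref{Thm:OptExt} requires $\int_S|f|^2e^{-\phi-\rho}<\infty$. This is automatic, since here it reduces to $e^{-\phi(0)}R^2<\infty$, which follows from the hypothesis $\phi(0)>-\infty$.
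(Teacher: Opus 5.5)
Your proposal is correct and matches the paper's proof: it uses the same choice $\psi(w)=2\log(|w|/R)$, the same gain $c(t)=e^t(1-e^t)^{n-1}$, and the same appeal to the second criterion of Remark \ref{Rmk:Gain} with $\int_{-\infty}^0 c\,dt=1/n$. You also spell out conditions \eqref{Eq:Cond1}, \eqref{Eq:Cond2}, the turning point $t_0=-\log n$, and the rescaling by $R^{2(n-1)}$, which the paper leaves implicit.
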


\begin{proof}
We take $\psi(z) = 2\log\dfrac{|z|}{R} < 0$. Then $S=\{0\}$ and $\rho=-2\log R$. Consider a positive smooth function $c(t)=(1-e^t)^{n-1}e^t$ on $(-\infty,0)$. Clearly,
\begin{equation*}
	\int_{-\infty}^0 c(t) dt = \int_0^1 (1-s)^{n-1} ds = \frac{1}{n}.
\end{equation*}
Since
\begin{equation*}
c'(t)=(1-ne^t)(1-e^t)^{n-2}e^t \quad\text{and}\quad (\log c)''=-\frac{(n-1)e^t}{(1-e^t)^2},
\end{equation*}
we see that $c(t)$ satisfies the second criterion in Remark \ref{Rmk:Gain}. 
Then the corollary is an application of Theorem \ref{Thm:OptExt} with the gain $c(t)$.
\end{proof}

By an iterative method, Guan-Zhou \cite{GuanZhou15} also proved an optimal $L^p$ extension theorem for $0<p<2$. In this paper, we need the following special case.

\begin{theorem}[Guan-Zhou] \label{Thm:OptLpExt}
Let $\phi$ be a plurisubharmonic function on $\BB^n(0,R)$ with $\phi(0)>-\infty$. Let $0<p\leq2$ be a constant. Then there exists a holomorphic function $F$ on $\BB^n(0,R)$ such that $F(0)=1$ and
\begin{equation*}
	\int_{\BB^n(0,R)} |F|^pe^{-\phi} d\lambda \leq \frac{\pi^n}{n!} R^{2n} e^{-\phi(0)}.
\end{equation*}
\end{theorem}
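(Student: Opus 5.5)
This is Guan-Zhou's optimal $L^p$ extension (Theorem \ref{Thm:OptLpExt}), which the paper quotes; if a proof is wanted, I would obtain it from the $p=2$ case, Corollary \ref{Cor:OptExt1}, by an iteration. The case $p=2$ is exactly Corollary \ref{Cor:OptExt1}, so fix $0<p<2$.

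First reduce to a weight that is smooth up to the boundary. Work on $\BB^n(0,r)$ with $r<R$ and replace $\phi$ by decreasing smooth plurisubharmonic approximations $\phi_j\downarrow\phi$. Once the estimate is proved for these approximations, take a normal-families limit of the extensions: Fatou's lemma passes the bound to the limit, and $\phi_j(0)\to\phi(0)$. Finally let $r\to R$.

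\textbf{Construction of the sequence.} Start with any $F_0\in\calO(\overline{\BB^n(0,r)})$ with $F_0(0)=1$, for instance $F_0\equiv1$. Given $F_k$, put
\begin{equation*}
\phi^{(k)}=\phi+(2-p)\log|F_k|,
\end{equation*}
which is plurisubharmonic. If $F_k$ vanishes on a hypersurface, $\phi^{(k)}$ takes the value $-\infty$ there; this is harmless, since $\phi^{(k)}(0)=\phi(0)$ is finite. Apply Corollary \ref{Cor:OptExt1} with weight $\phi^{(k)}$ to obtain $F_{k+1}$ with $F_{k+1}(0)=1$ and
\begin{equation*}
\int |F_{k+1}|^2|F_k|^{p-2}e^{-\phi}\,d\lambda\leq V e^{-\phi(0)},\qquad V=\tfrac{\pi^n}{n!}r^{2n}.
\end{equation*}

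\textbf{Hölder step.} Write $|F_{k+1}|^p=\bigl(|F_{k+1}|^2|F_k|^{p-2}\bigr)^{p/2}\,|F_k|^{p(2-p)/2}$. Applying Hölder with exponents $2/p$ and $2/(2-p)$ against the measure $e^{-\phi}d\lambda$ gives
\begin{equation*}
I_{k+1}:=\int|F_{k+1}|^pe^{-\phi}\leq (Ve^{-\phi(0)})^{p/2} I_k^{(2-p)/2}.
\end{equation*}
Set $a=Ve^{-\phi(0)}$ and $x_k=\log(I_k/a)$. The inequality becomes $x_{k+1}\leq \tfrac{2-p}{2}x_k$. Since $0<\tfrac{2-p}{2}<1$, we get $\limsup_k x_k\leq 0$, provided $I_0<\infty$; this holds after the smoothing and shrinking reduction.

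\textbf{Passing to the limit.} The main obstacle is extracting a limit $F$ from the sequence $F_k$. The $L^p$ bounds with weight $e^{-\phi}$ are locally uniform, because $e^{-\phi}$ is locally bounded below ($\phi$ is bounded above on compacts). Subharmonicity of $|F_k|^p$ then gives local uniform bounds on $|F_k|$, so Montel's theorem yields a subsequence converging locally uniformly to some $F$ with $F(0)=1$. Fatou's lemma gives
\begin{equation*}
\int|F|^pe^{-\phi}\leq\liminf_k I_k\leq Ve^{-\phi(0)}.
\end{equation*}
Letting $r\to R$ and taking one more normal-family limit finishes the argument.
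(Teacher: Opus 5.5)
Your argument is correct, and it is the Guan--Zhou iterative method that the paper cites without proof. You feed the weights $\phi+(2-p)\log|F_k|$ into the optimal $L^2$ extension (Corollary \ref{Cor:OptExt1}), apply H\"older with exponents $2/p$ and $2/(2-p)$ to get the contraction $x_{k+1}\leq\frac{2-p}{2}x_k$, and extract a limit by normal families and Fatou's lemma; the smoothing and shrinking step, needed only to guarantee $I_0<\infty$, is handled correctly, and the same H\"older splitting is the one the paper uses in Section \ref{Sec5}.
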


\subsection{Minimal $L^2$ Integrals}

We recall some properties of minimal $L^2$ integrals.

Let $\Omega$ be a domain in $\CC^n$ and $\varphi$ an upper semi-continuous function on $\Omega$. It is well-known that $A^2(\Omega;e^{-\varphi})$ is a Hilbert space whose inner product structure is given by
\begin{equation*}
\langle f,g\rangle := \int_\Omega f\bar{g} e^{-\varphi} d\lambda, \quad f,g\in A^2(\Omega;e^{-\varphi}).
\end{equation*}
We denote $\|f\| = \sqrt{\langle f,f\rangle}$. Fix a point $z_0\in\Omega$. We assume that there exists some $F\in A^2(\Omega;e^{-\varphi})$ with $F(z_0)=1$. We consider the following minimal $L^2$ integral
\begin{equation} \label{Eq:Inf}
	I := \inf\left\{ \int_\Omega |f|^2e^{-\varphi} d\lambda: f\in A^2(\Omega;e^{-\varphi}), f(z_0)=1 \right\}.
\end{equation}

\begin{lemma}
There exists a unique minimizer for the infimum \eqref{Eq:Inf}.
\end{lemma}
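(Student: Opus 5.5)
The plan is to view the infimum \eqref{Eq:Inf} as the squared distance from the origin to a closed affine subspace of the Hilbert space $A^2(\Omega;e^{-\varphi})$. First I would check that $A^2(\Omega;e^{-\varphi})$ is a Hilbert space and that point evaluation at $z_0$ is continuous. Since $\varphi$ is upper semi-continuous, it is bounded above on a closed ball $\overline{B}\subset\Omega$ centered at $z_0$, say $\varphi\leq M$ on $\overline{B}$. The sub-mean value property of $|f|^2$ then gives
\begin{equation*}
|f(z_0)|^2\leq \frac{1}{|B|}\int_B|f|^2d\lambda\leq \frac{e^{M}}{|B|}\|f\|^2 .
\end{equation*}
The same estimate, applied on compact subsets, shows that $L^2(e^{-\varphi})$-convergence implies locally uniform convergence. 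Hence $A^2(\Omega;e^{-\varphi})$ is complete, and the evaluation $f\mapsto f(z_0)$ is a bounded linear functional.

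Next set $H_1=\{f\in A^2(\Omega;e^{-\varphi}): f(z_0)=1\}$. By continuity of evaluation it is a closed affine subspace, and it is nonempty because $F\in H_1$ by assumption. Existence of a minimizer follows from the standard parallelogram argument. Take a minimizing sequence $f_j\in H_1$ with $\|f_j\|^2\to I$. Since $(f_j+f_k)/2\in H_1$, we have
\begin{equation*}
\Big\|\frac{f_j-f_k}{2}\Big\|^2=\frac{\|f_j\|^2+\|f_k\|^2}{2}-\Big\|\frac{f_j+f_k}{2}\Big\|^2\leq \frac{\|f_j\|^2+\|f_k\|^2}{2}-I\to0 .
\end{equation*}
So $(f_j)$ is Cauchy, and it converges to some $f_0\in A^2(\Omega;e^{-\varphi})$. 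Closedness of $H_1$ gives $f_0(z_0)=1$, and $\|f_0\|^2=I$.

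Uniqueness uses the same identity. If $f_0,g_0\in H_1$ both satisfy $\|f_0\|^2=\|g_0\|^2=I$, then $(f_0+g_0)/2\in H_1$, so $\|(f_0-g_0)/2\|^2\leq I-I=0$. Hence $f_0=g_0$ almost everywhere, and since both are holomorphic, they agree everywhere.

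The only step needing care is the continuity of evaluation, and with it the completeness of the space. This is where upper semi-continuity of $\varphi$ is used: it makes $e^{-\varphi}$ locally bounded below by a positive constant. Note that $\varphi=-\infty$ somewhere causes no trouble, because that only makes $e^{-\varphi}$ larger. Once this is in place, the rest is the standard Hilbert-space projection argument. Equivalently, $f_0=K(\cdot,z_0)/K(z_0,z_0)$, and one could argue via the Riesz representation theorem instead.
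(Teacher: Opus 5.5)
Your proof is correct, and your uniqueness step is exactly the paper's parallelogram argument. The existence step takes a different route. The paper never invokes completeness of $A^2(\Omega;e^{-\varphi})$. From a minimizing sequence $\{f_j\}$ it uses only that $\varphi$ is locally bounded above, which gives uniform bounds on $\int_K|f_j|^2d\lambda$ for compact $K$. It then extracts a compactly convergent subsequence by Montel's theorem and gets $\int_\Omega|\hat f|^2e^{-\varphi}d\lambda\leq I$ from Fatou's lemma.

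You instead use the parallelogram identity a second time to show that the whole minimizing sequence is Cauchy. You then appeal to completeness, which you derive from the boundedness of point evaluations. Your version is the Hilbert projection theorem. It gives norm convergence of the full sequence, not just a locally uniformly convergent subsequence, and it leads directly to the identification with $K(\cdot,z_0)/K(z_0,z_0)$. The paper's version avoids having to verify completeness.

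Your one-line justification of completeness actually hides the paper's argument. A norm-Cauchy sequence is locally uniformly Cauchy, so it converges locally uniformly to a holomorphic $f$. You still need Fatou's lemma, $\|f_j-f\|^2\leq\varliminf_{k}\|f_j-f_k\|^2$, to see that $f\in A^2(\Omega;e^{-\varphi})$ and that $f_j\to f$ in norm. So both proofs rest on the same two ingredients: local $L^2$ control coming from the upper bound on $\varphi$, and Fatou's lemma. They simply package them differently. Since the paper itself takes completeness as well known, your shortcut there is acceptable.
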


\begin{proof}
We denote by $\calF$ the collection of all holomorphic functions $f\in A^2(\Omega;e^{-\varphi})$ with $f(z_0)=1$. Since $F\in\calF$, the collection is nonempty. We take a sequence $\{f_j\}$ in $\calF$ such that
\begin{equation*}
	\lim_{j\to+\infty} \int_\Omega |f_j|^2e^{-\varphi} d\lambda = I.
\end{equation*}
Since $\varphi$ is locally bounded from above, we see that $\int_K |f_j|^2 d\lambda$ is uniformly bounded for any compact subset $K$ of $\Omega$. According to Montel's theorem, there exists a subsequence $\{f_{j_k}\}$ which converges compactly to some $\hat{f}\in\calO(\Omega)$. Clearly,
\begin{equation*}
\hat{f}(z_0) = \lim_{k\to+\infty} f_{j_k}(z_0) = 1
\end{equation*}
and
\begin{align*}
I \leq \int_\Omega |\hat{f}|^2e^{-\varphi} d\lambda & = \int_\Omega \lim_{k\to+\infty} |f_{j_k}|^2e^{-\varphi} d\lambda \\
& \leq \varliminf_{k\to+\infty} \int_\Omega |f_{j_k}|^2e^{-\varphi} d\lambda = I.
\end{align*}
Therefore, $\hat{f}$ is a minimizer for the infimum \eqref{Eq:Inf}.

Assume that $\hat{g}$ is another minimizer for the infimum \eqref{Eq:Inf}. Then $(\hat{f}+\hat{g})/2 \in \calF$. It is clear that
\begin{align*}
I \leq \Big\|\frac{\hat{f}+\hat{g}}{2}\Big\|^2 & \leq \Big\|\frac{\hat{f}+\hat{g}}{2}\Big\|^2 + \Big\|\frac{\hat{f}-\hat{g}}{2}\Big\|^2 \\
& = \frac{\|\hat{f}\|^2 + \|\hat{g}\|^2}{2} = I.
\end{align*}
Consequently, $(\hat{f}-\hat{g})/2 \equiv 0$. This proves the uniqueness of the minimizer.
\end{proof}

From another perspective, the minimizer is precisely the minimal $L^2$ extension of the constant function $1$ from $S=\{z_0\}$ to $\Omega$.

\begin{lemma} \label{Lemma:Ortho}
A function $f \in A^2(\Omega;e^{-\varphi})$ is the minimizer for the infimum \eqref{Eq:Inf} if and only if
\begin{equation*}
	\int_\Omega f\overline{g}e^{-\varphi} d\lambda = 0
\end{equation*}
for any $g\in A^2(\Omega;e^{-\varphi})$ with $g(z_0)=0$.
\end{lemma}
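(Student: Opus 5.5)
This is the standard variational characterization of the minimizer in a Hilbert space, and the plan is to prove the two directions separately. Throughout, write $\calF$ for the set of $f\in A^2(\Omega;e^{-\varphi})$ with $f(z_0)=1$, and $\calF_0$ for the closed subspace of $g\in A^2(\Omega;e^{-\varphi})$ with $g(z_0)=0$. Point evaluation at $z_0$ is a bounded linear functional on $A^2(\Omega;e^{-\varphi})$: $\varphi$ is locally bounded above, so the mean value inequality on a small ball controls $|f(z_0)|$ by $\|f\|$. Hence $\calF_0$ is closed, and $\calF=f+\calF_0$ for any $f\in\calF$.

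\emph{Necessity.} Assume $f$ is the minimizer and fix $g\in\calF_0$. For every $t\in\CC$ we have $f+tg\in\calF$, so
\begin{equation*}
\|f\|^2\leq\|f+tg\|^2=\|f\|^2+2\Re\big(\bar{t}\inner{f,g}\big)+|t|^2\|g\|^2 .
\end{equation*}
Take $t=-s\inner{f,g}$ with $s>0$ real. This gives $-2s|\inner{f,g}|^2+s^2|\inner{f,g}|^2\|g\|^2\geq0$. Divide by $s$ and let $s\to0^+$ to obtain $\inner{f,g}=0$.

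\emph{Sufficiency.} There is an implicit normalization $f(z_0)=1$ here. Without it, $f=0$ satisfies the orthogonality condition trivially but is not in $\calF$. I would read the statement as concerning $f\in\calF$, and note this explicitly.

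Assume $f\in\calF$ is orthogonal to $\calF_0$, and let $h\in\calF$ be arbitrary. Then $h-f\in\calF_0$, so by the Pythagorean identity
\begin{equation*}
\|h\|^2=\|f\|^2+\|h-f\|^2\geq\|f\|^2 .
\end{equation*}
Thus $f$ attains the infimum $I$, so it is the minimizer. Equality forces $h=f$, which also recovers uniqueness.

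The only mildly delicate point is this normalization issue in the converse; the rest is routine.
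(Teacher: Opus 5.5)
Your proof is correct and follows the paper's argument essentially verbatim. For necessity you use the same perturbation $f+\tau g$ with $\tau=-t\inner{f,g}$, and for sufficiency the same expansion $\|h\|^2=\|f\|^2+\|h-f\|^2$. Your remark about the normalization is well taken: the paper's converse also tacitly assumes $f(z_0)=1$, since this is what makes $h-f$ vanish at $z_0$. The closedness of $\{g: g(z_0)=0\}$ that you establish via bounded point evaluation is harmless but never used.
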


\begin{proof}
Assume that $f \in A^2(\Omega;e^{-\varphi})$ is the minimizer. For any $g\in A^2(\Omega;e^{-\varphi})$ with $g(z_0)=0$, we have
\begin{equation*}
\|f\|^2 \leq \|f+\tau g\|^2 = \|f\|^2 + 2\Re(\tau\langle g,f\rangle) + |\tau|^2 \|g\|^2, \quad \forall\tau\in\CC.
\end{equation*}
Choose $\tau=-t\langle f,g\rangle$ with $t>0$. Then the inequality becomes
\begin{equation*}
- 2t |\langle f,g\rangle|^2 + t^2 |\langle f,g\rangle|^2 \|g\|^2\geq 0, \quad \forall t>0.
\end{equation*}
Dividing by $t$ and letting $t\to0^+$, we obtain $-2|\langle f,g\rangle|^2 \geq0$. Therefore,
\begin{equation*}
\langle f,g\rangle = \int_\Omega f\overline{g}e^{-\varphi} d\lambda = 0.
\end{equation*}

Conversely, assume that the above equality holds for any $g\in A^2(\Omega;e^{-\varphi})$ with $g(z_0)=0$. Consider another function $h\in A^2(\Omega;e^{-\varphi})$ with $h(z_0)=1$. Then
\begin{align*}
\|h\|^2 & = \|f+(h-f)\|^2 \\
& = \|f\|^2 + 2\Re(\langle f,h-f\rangle) + \|h-f\|^2 \\
& \geq \|f\|^2 + \|h-f\|^2 \geq \|f\|^2.
\end{align*}
This completes the proof.
\end{proof}

Next, we consider the variational property of minimal $L^2$ integrals on sublevel sets of a plurisubharmonic function.

Let $\Omega$ be a pseudoconvex domain in $\CC^n$ and $\varphi$ a plurisubharmonic function on $\Omega$. Assume that there exists a plurisubharmonic function $\psi<0$ on $\Omega$ such that $e^{-\psi}$ is not integrable at a given point $z_0\in\Omega$. In particular, $\psi(z_0)=-\infty$.

Assume that there exists a holomorphic function $F\in\calO(\Omega)$ with $F(z_0)=1$ and
\begin{equation*}
\int_\Omega |F|^2e^{-\varphi} d\lambda < +\infty.
\end{equation*}
For each $t\leq0$, let $\Omega_t = \{z\in\Omega: \psi(z)<t\}$ be the sublevel set of $\psi$. We consider the following minimal $L^2$ integral
\begin{equation*}
I(t) := \inf\left\{ \int_{\Omega_t} |f|^2e^{-\varphi} d\lambda: f\in\calO(\Omega_t), f(z_0)=1 \right\}.
\end{equation*}
We also denote by $F_t\in \calO(\Omega_t)$ the unique minimizer for the infimum defining $I(t)$.
Clearly, $0 < I(t) < +\infty$, $I(t)$ is increasing with respect to $t$, and
\begin{equation*}
0 \leq \lim_{t\to-\infty} I(t) \leq \lim_{t\to-\infty} \int_{\Omega_t} |F|^2e^{-\varphi} d\lambda = 0.
\end{equation*}

Guan \cite{Guan19} proved a useful concavity property for minimal $L^2$ integrals.

\begin{theorem}[{\cite[Proposition 4.1]{Guan19}}] \label{Thm:Concave}
$I(\log r)$ is a concave function of $r\in(0,1]$.
\end{theorem}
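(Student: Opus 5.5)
We need to prove that $I(\log r)$ is concave in $r\in(0,1]$ (Guan's Proposition 4.1). The plan is to show that at each point $r_0$, $I(\log r)$ lies below a line through $(r_0,I(\log r_0))$. This amounts to bounding $I$ at smaller and larger parameters in terms of data at $r_0$. Throughout, we use Theorem~\ref{Thm:OptExt} with varying gains, applied on sublevel sets $\Omega_{t_0}$ with the rescaled function $\psi-t_0$.

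\emph{Step 1 (reduction).} Fix $t_0=\log r_0$ and let $F_{t_0}$ be the minimizer on $\Omega_{t_0}$. Setting $r=e^t$, concavity of $I(\log r)$ amounts to showing that the function $r\mapsto I(\log r)$ admits, at every $r_0\in(0,1)$, a supporting line from above. It suffices to establish, for all $t<t_0<t_1\le 0$, the three-slope inequality
\[
\frac{I(t_0)-I(t)}{e^{t_0}-e^{t}}\;\ge\;\frac{I(t_1)-I(t_0)}{e^{t_1}-e^{t_0}}.
\]
We also need the behaviour $I(t)\to0$ as $t\to-\infty$, which was noted above and gives the value at $r=0$.

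\emph{Step 2 (extension with gain).} Start from the minimizer $F_{t_1}$ on $\Omega_{t_1}$ and try to produce a competitor on $\Omega_{t_1}$ whose mass on the annulus $\{t_0\le\psi<t_1\}$ is controlled by the mass of $F_{t_1}$ near $\{\psi<t_0\}$. Theorem~\ref{Thm:OptExt} is stated for extension from a submanifold $S$. In this setting, since $e^{-\psi}$ is not integrable at $z_0$, we use instead the more general Guan--Zhou version: extension from the jet at $z_0$, i.e.\ modulo the ideal of functions $g$ with $g(z_0)=0$. The input data is $F_{t_0}$ restricted near $z_0$ together with its weighted mass near the pole. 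The natural gain is $c(s)=\mathbf 1_{(t,t_0)}(s)$, smoothed. With this gain, the $L^2$ estimate on $\Omega_{t_1}$ reads
\[
\int_{\Omega_{t_1}}|\tilde F|^2e^{-\varphi}\,c(\psi)e^{-\psi}\;\lesssim\;\Big(\int c\Big)\cdot\liminf_{s\to-\infty}\frac{1}{e^{s}}\int_{\Omega_s}|F_{t_0}|^2e^{-\varphi},
\]
which, after choosing $c$ adapted to the levels, converts into
\[
I(t_1)-I(t_0)\;\le\;\frac{e^{t_1}-e^{t_0}}{e^{t_0}-e^{t}}\,\bigl(I(t_0)-I(t)\bigr).
\]
Here the orthogonality characterisation of Lemma~\ref{Lemma:Ortho} splits the squared norm of the competitor into the minimal part plus an orthogonal defect. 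This splitting lets us identify $\int_{\Omega_{t_0}\setminus\Omega_t}|F_{t_0}|^2e^{-\varphi}$ with at least $I(t_0)-I(t)$ in the appropriate direction.

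\emph{Step 3 (main obstacle).} The key difficulty is the limiting argument. We must approximate indicator gains by admissible smooth gains satisfying \eqref{Eq:Cond1}--\eqref{Eq:Cond3}, and $\psi$ need not be continuous or have the regularity that Theorem~\ref{Thm:OptExt} requires. This forces a smoothing of $\psi$ via Richberg/Demailly approximation on exhausting subdomains. The extensions must then be passed to the limit with Montel's theorem, exactly as in the existence proof of the minimizer above, with weak lower semicontinuity used to keep the inequality. The other delicate point is the right-hand side at the pole, where we need $F_{t}-F_{t_0}$ to vanish at $z_0$ for the extension estimate to apply to the difference. I would first handle $\psi$ continuous, using the gain $c(s)=e^{s}$ on $(t,t_0)$ rescaled, and then remove the regularity assumption by the approximation.
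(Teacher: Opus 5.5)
The paper quotes this result from Guan \cite{Guan19} without proof, so the comparison here is with Guan's argument. Your reduction to the three-slope inequality is correct, and orthogonality (Lemma~\ref{Lemma:Ortho}) is one of the ingredients. However, Step~2 has a genuine gap: the extension theorem you invoke cannot give the inequality you claim.

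In the jet version of Theorem~\ref{Thm:OptExt}, the right-hand side is the Ohsawa-type mass $\liminf_{s\to-\infty}e^{-s}\int_{\Omega_s}|F_{t_0}|^2e^{-\varphi}\,d\lambda$ at the pole. This quantity depends only on the jet of $F_{t_0}$ at $z_0$. It is therefore fixed by the normalization $F_{t_0}(z_0)=1$ and is the same for all $t<t_0<t_1$; the gain only contributes the numerical factor $\int c$. No choice of gain turns a bound by this fixed constant into a bound by $I(t_0)-I(t)$. Such estimates only compare $I$ with that constant, as in \eqref{Eq:UBIt}, and say nothing about increments of $I$ at interior levels. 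Moreover, the function they produce retains nothing of $F_{t_0}$ beyond its value at $z_0$. So the defect $\int_{\Omega_{t_0}}|\tilde F-F_{t_0}|^2e^{-\varphi}\,d\lambda$ in your orthogonal splitting is uncontrolled. The phrase ``after choosing $c$ adapted to the levels, converts into \dots'' is exactly where the proof is missing.

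Two smaller corrections. You must start from $F_{t_0}$, not $F_{t_1}$. The relation you need is $\int_{\Omega_{t_0}\setminus\Omega_t}|F_{t_0}|^2e^{-\varphi}\,d\lambda\le I(t_0)-I(t)$, i.e.\ ``at most'', not ``at least''. It comes from minimality of $I(t)$, since $F_{t_0}|_{\Omega_t}$ is a competitor, not from orthogonality.

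The missing idea, which is the key lemma in Guan's proof, is an $L^2$ extension from the open sublevel set $\Omega_{t_0}$ rather than from the point $z_0$.

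\begin{itemize}
\item Take $b$ rising from $0$ to $1$ across $(t_0-B,t_0)$. Solve $\bar\partial u=\bar\partial\big((1-b(\psi))F_{t_0}\big)=-b'(\psi)F_{t_0}\,\bar\partial\psi$ on $\Omega_{t_1}$ with Guan--Zhou twisted weights in which $e^{-\psi}$ forces $u(z_0)=0$.
\item Then $\tilde F=(1-b(\psi))F_{t_0}-u$ is a competitor for $I(t_1)$.
\item The error $\int_{\Omega_{t_1}}|u|^2e^{-\varphi}\,d\lambda$ is bounded, up to factors tending to $1$ as $B\to0$, by $(e^{t_1}-e^{t_0})\,e^{-t_0}\,\frac1B\int_{\{t_0-B<\psi<t_0\}}|F_{t_0}|^2e^{-\varphi}\,d\lambda$. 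This is the mass of $F_{t_0}$ on a thin \emph{inner shell}, and it is at most $(I(t_0)-I(t_0-B))/B$.
\item For $B>0$ the cut-off $(1-b(\psi))F_{t_0}$ differs from $\mathbf{1}_{\Omega_{t_0}}F_{t_0}$ on the shell. So the splitting $\int_{\Omega_{t_1}}|\tilde F|^2e^{-\varphi}\,d\lambda=I(t_0)+\int_{\Omega_{t_1}}|\tilde F-\mathbf{1}_{\Omega_{t_0}}F_{t_0}|^2e^{-\varphi}\,d\lambda$ becomes usable only after letting $B\to0$ along a sequence, using Montel.
\end{itemize}

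What comes out is not your finite chord inequality but its one-sided Dini version
\[
I(t_1)-I(t_0)\le\big(e^{t_1}-e^{t_0}\big)\liminf_{B\to0^+}\frac{I(t_0)-I(t_0-B)}{e^{t_0}-e^{t_0-B}},\qquad t_0<t_1\le0.
\]
Concavity then needs a further real-variable step. A lower semicontinuous function with this property is concave: compare it with a chord at a minimum point of the difference. Here $r\mapsto I(\log r)$ is lower semicontinuous because it is increasing and left-continuous. Left-continuity follows from Montel applied to the minimizers $F_t$ as $t\uparrow t_0$.

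Compared with these missing steps, the regularization issues in your Step~3 are secondary.
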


Moreover, Xu-Zhou \cite{XuZhou24} obtained a necessary condition under which the concavity of minimal $L^2$ integrals reduces to linearity.

\begin{theorem}[{\cite[Remark 5.3]{XuZhou24}}] \label{Thm:Linear}
If $I(\log r)$ is a linear function of $r\in(0,1]$, then
\begin{equation*}
	F_t \equiv F_0|_{\Omega_t}, \quad \forall t<0.
\end{equation*}
\end{theorem}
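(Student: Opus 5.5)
The plan is to revisit the mechanism behind Guan's concavity (Theorem \ref{Thm:Concave}) and show that linearity forces every inequality in it to be an equality. Uniqueness of minimizers, together with the orthogonality characterization of Lemma \ref{Lemma:Ortho}, then identifies $F_t$ with $F_0|_{\Omega_t}$. Throughout, write $I(\log r)=rI(0)$, i.e.\ $I(t)=e^tI(0)$ for $t\leq0$.

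\emph{Step 1 (extension with gain across level sets).} Fix $t<0$ and $B>0$ small. Following Guan--Zhou's method, I would use a variant of Theorem \ref{Thm:OptExt} adapted to the sublevel sets of $\psi$. Apply it to $f=F_t$ on $\Omega_t$, with a cut-off $b_{t,B}(\psi)$ that equals $1$ below $t-B$ and $0$ above $t$, and with a gain tailored so that the final weight becomes $e^{-\varphi}$ on $\Omega_0$. This should produce $\tilde F\in\calO(\Omega_0)$ with $\tilde F(z_0)=1$ (the correction $\tilde F-b_{t,B}(\psi)F_t$ lies in the multiplier ideal of $e^{-\psi}$, which kills the value at $z_0$ because $e^{-\psi}$ is not integrable there). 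The estimate should take the form
\begin{equation*}
\int_{\Omega_0}\bigl|\tilde F-b_{t,B}(\psi)F_t\bigr|^2e^{-\varphi}d\lambda\leq \frac{1-e^{t}}{e^{t}}\cdot\frac{e^{t}}{B}\int_{\Omega_t\setminus\Omega_{t-B}}|F_t|^2e^{-\varphi}d\lambda
\end{equation*}
up to the normalization of the gain. As $B\to0$, the right-hand side is controlled by a one-sided derivative of $t\mapsto\int_{\Omega_t}|F_t|^2e^{-\varphi}$, which is in turn bounded by data of $I$. This is exactly how Guan obtains concavity.

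\emph{Step 2 (chain of inequalities).} Expand $\|\tilde F\|^2$ on $\Omega_0$, using the triangle inequality and the orthogonality from Lemma \ref{Lemma:Ortho} for $F_t$ on $\Omega_t$. This gives
\begin{equation*}
I(0)\leq\int_{\Omega_0}|\tilde F|^2e^{-\varphi}\leq I(t)+\frac{1-e^t}{e^t}\,I(t)+o(1)\quad(B\to0),
\end{equation*}
where the middle term arises because, under linearity, the derivative of $I$ in the variable $r=e^t$ is exactly $I(0)$. For $I(t)=e^tI(0)$ the right-hand side equals $I(0)$. So in the limit every inequality is an equality.

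\emph{Step 3 (rigidity).} By Montel's theorem, pass to a limit $\tilde F_B\to\tilde F_*$ as $B\to0$. Then $\tilde F_*(z_0)=1$ and $\|\tilde F_*\|^2\leq I(0)$, so uniqueness of the minimizer gives $\tilde F_*=F_0$. The equality case of the splitting in Step 2 forces the cross terms to vanish and forces $F_0|_{\Omega_t}$ to realize $\int_{\Omega_t}|F_0|^2e^{-\varphi}=I(t)$. Uniqueness of the minimizer on $\Omega_t$ then gives $F_t\equiv F_0|_{\Omega_t}$.

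The main obstacle is Step 1. One has to set up the extension with gain precisely: choose the gain and the cut-off so that the error term really is the derivative of $I$ and the weight on $\Omega_0$ is exactly $e^{-\varphi}$ with no loss of constant. One also has to justify the limits $B\to0$ (via weak compactness and lower semicontinuity) without losing the equality information. I would first try the gain $c\equiv1$ on $(t,0)$ in the $r$-variable, which mirrors the linear profile $I(\log r)=rI(0)$.
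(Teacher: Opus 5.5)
The paper does not prove this statement; it quotes it from Xu--Zhou. So your outline has to stand on its own.

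Steps 1--2 reproduce Guan's concavity mechanism correctly, apart from a normalization slip. The factor in your display should be $\frac{1-e^{t-B}}{e^{t-B}}\cdot\frac1B$, coming from $(1-e^{t-B})\frac1B\int_{\Omega_t\setminus\Omega_{t-B}}|F_t|^2e^{-\varphi-\psi}d\lambda$, not $\frac{1-e^t}{B}$. As written, Step 2 would give $I(0)\le(2-e^t)I(t)<I(0)$, although the value you actually use in Step 2 is the right one.

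The genuine gap is Step 3. By Lemma \ref{Lemma:Ortho} on $\Omega_t$, the following holds for every $t$, with no linearity assumption:
\[
\int_{\Omega_0}|F_0|^2e^{-\varphi}d\lambda=I(t)+\int_{\Omega_t}|F_0-F_t|^2e^{-\varphi}d\lambda+\int_{\Omega_0\setminus\Omega_t}|F_0|^2e^{-\varphi}d\lambda .
\]
Suppose your extension estimate carries only the weight $e^{-\varphi}$. Then letting $B\to0$ in your chain gives no more than
\[
\int_{\Omega_t}|F_0-F_t|^2e^{-\varphi}+\int_{\Omega_0\setminus\Omega_t}|F_0|^2e^{-\varphi}\le I(0)-I(t),
\]
and by the identity above this is automatically an equality. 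The cross terms tend to $0$ regardless of linearity, by orthogonality and $\int_{\Omega_t\setminus\Omega_{t-B}}|F_t|^2e^{-\varphi}\to0$. So the equality case only fixes the sum of the two terms. It cannot force $F_0=F_t$ on $\Omega_t$; that would need $\int_{\Omega_t}|F_0|^2e^{-\varphi}\le I(t)$, which is the claim itself.

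The missing idea is that you must \emph{not} normalize the weight to $e^{-\varphi}$: its strict excess is what carries the rigidity. Guan--Zhou's $\bar\partial$-estimate actually gives
\[
\int_{\Omega_0}\bigl|\tilde F_B-b_{t,B}(\psi)F_t\bigr|^2e^{-\varphi+v_{t,B}(\psi)-\psi}d\lambda\le(1-e^{t-B})\frac1B\int_{\Omega_t\setminus\Omega_{t-B}}|F_t|^2e^{-\varphi-\psi}d\lambda .
\]
Here $v_{t,B}(s)=t+\int_t^s(1-b_{t,B}(\tau))d\tau\ge s$, with $v_{t,B}(s)=s$ for $s\ge t$ and $v_{t,B}(s)\to\max\{s,t\}$ as $B\to0$. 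The factor $e^{v-\psi}$ contains $e^{-\psi}$ near $z_0$, and it is also what actually gives $\tilde F_B(z_0)=1$; the weight $e^{-\varphi}$ alone would not.

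Take a Montel limit $\tilde F_*$. Fatou's lemma and linearity give
\[
\int_{\Omega_t}|\tilde F_*-F_t|^2e^{-\varphi}e^{t-\psi}+\int_{\Omega_0\setminus\Omega_t}|\tilde F_*|^2e^{-\varphi}\le I(0)-I(t).
\]
Combine this with the Pythagorean identity for $\tilde F_*$ on $\Omega_t$ and with $\int_{\Omega_0}|\tilde F_*|^2e^{-\varphi}\ge I(0)$. This yields
\[
\int_{\Omega_t}|\tilde F_*-F_t|^2e^{-\varphi}\bigl(e^{t-\psi}-1\bigr)d\lambda\le0 .
\]
Since $e^{t-\psi}>1$ on $\Omega_t$, we get $\tilde F_*=F_t$ there. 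All inequalities in the chain are then equalities, so $\tilde F_*=F_0$ by uniqueness of the minimizer. With this change your outline becomes a complete proof.
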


In other words, when $r\mapsto I(\log r)$ is linear, the minimal $L^2$ extension $F_t$ on each sublevel set $\Omega_t$ is the restriction of a single holomorphic function $F_0$. This rigidity plays an important role in the proof of Theorem \ref{Thm:CharPH}.

We refer the interested readers to Guan-Mi \cite{GuanMi22} and their subsequent work for more general results on the concavity and linearity of minimal $L^2$ integrals.

\subsection{Forelli's Theorem \& A Lemma From Real Analysis}

\begin{theorem}[{Forelli \cite{Forelli}}] \label{Thm:Forelli}
Let $\phi$ be a real-valued function on $\BB^n$ such that $w\mapsto\phi(w\xi)$ is harmonic on $\DD$ for any $\xi\in\Sp^{2n-1}$. If for every $k\in\mathbb{N}$ there exists an open neighborhood $U_k\ni0$ such that $\phi\in C^k(U_k)$, then $\phi$ is pluriharmonic on $\BB^n$. 
\end{theorem}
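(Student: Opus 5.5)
The plan is to expand every slice in its harmonic power series, use the smoothness at the origin to identify the coefficients as homogeneous polynomials, and then show that the resulting series of homogeneous polynomials converges to a holomorphic function on $\BB^n$.

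\textbf{Step 1: slice expansion.} Fix $\xi\in\Sp^{2n-1}$. The function $w\mapsto\phi(w\xi)$ is real-valued and harmonic on $\DD$, so
\begin{equation*}
\phi(w\xi) = a_0(\xi) + 2\Re\sum_{j\geq1} a_j(\xi)w^j, \quad |w|<1,
\end{equation*}
with $a_0(\xi)$ real. This series converges locally uniformly on $\DD$, hence $\limsup_j |a_j(\xi)|^{1/j}\leq1$ for every $\xi$.

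\textbf{Step 2: identify the coefficients.} Fix $k$. Since $\phi\in C^k(U_k)$, Taylor's formula at $0$ gives
\begin{equation*}
\phi(z)=\sum_{|\alpha|+|\beta|\leq k} c_{\alpha\beta}z^\alpha\bar z^\beta+o(|z|^k).
\end{equation*}
Substituting $z=w\xi$ yields
\begin{equation*}
\phi(w\xi)=\sum_{p+q\leq k}P_{pq}(\xi)w^p\bar w^q+o(|w|^k),
\end{equation*}
where $P_{pq}(\xi)=\sum_{|\alpha|=p,|\beta|=q}c_{\alpha\beta}\xi^\alpha\bar\xi^\beta$. Compare this with the expansion of Step 1, using uniqueness of the Taylor expansion in $(w,\bar w)$ of a smooth function at $w=0$. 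We get $P_{pq}(\xi)=0$ whenever $p,q\geq1$, and $a_j(\xi)=P_{j0}(\xi)$ for $1\leq j\leq k$. Since $k$ is arbitrary, every $a_j$ is the restriction to the sphere of a holomorphic homogeneous polynomial $Q_j$ of degree $j$. By homogeneity, $Q_j(w\xi)=w^jQ_j(\xi)$. Hence for every $z\in\BB^n$,
\begin{equation*}
\phi(z)=Q_0+2\Re\sum_{j\geq1}Q_j(z),
\end{equation*}
where the series converges pointwise on $\BB^n$ along each complex line through $0$.

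\textbf{Step 3: holomorphy of the series.} It remains to show that $f:=\sum_{j\geq1}Q_j$ is holomorphic on $\BB^n$. This is the main obstacle, since pointwise convergence on lines alone does not give local uniform convergence.

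\emph{Local upper bound.} Consider the closed sets
\begin{equation*}
E_M=\{z\in\overline{\BB^n(0,1/2)}:|Q_j(z)|\leq M\ \ \forall j\},
\end{equation*}
whose union is $\overline{\BB^n(0,1/2)}$ by Step 1. By Baire's theorem some $E_M$ has nonempty interior. A homogeneous polynomial bounded by $M$ on an open set is controlled on a whole ball. Concretely, I would use the Bernstein--Walsh/Siciak estimate $|Q_j|\leq M e^{jV}$, where $V$ is the (locally bounded) extremal function of that nonpluripolar open set. Combined with homogeneity, this gives $|Q_j(z)|\leq MC^j$ on $\BB^n$.

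\emph{Hartogs' lemma.} Set $u_j=\frac1j\log|Q_j|$. These are plurisubharmonic and, by the previous bound, locally uniformly bounded above. By Step 1, for $|z|\leq r<1$ we have $\limsup_j u_j(z)\leq\log r$. Hartogs' lemma then gives $u_j\leq\log r+\varepsilon$ on compact subsets of $\BB^n(0,r)$ for $j$ large. Thus $\sum Q_j$ converges locally uniformly on $\BB^n$, and $f\in\calO(\BB^n)$.

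\textbf{Conclusion.} We obtain $\phi=Q_0+2\Re f$ with $f$ holomorphic, so $\phi$ is pluriharmonic on $\BB^n$.
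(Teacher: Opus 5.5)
Your proof is correct. The paper does not prove this statement; it quotes it from Forelli. The natural comparison is therefore with the Forelli-style argument in the proof of Theorem~\ref{Thm:Forelli-full}, which differs from yours at the first step and agrees with it at the last.

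There, plurisubharmonicity near $0$ together with Theorem~\ref{Thm:Forelli-Sim} (hence ultimately Theorem~\ref{Thm:CharPH}) already gives $\varphi=2\Re f$ with $f$ holomorphic on $\BB^n(0,\rho)$. The slice expansions are then read off from the Taylor series of $f$. The a priori bound $|f_k|^{1/k}\leq\alpha$ on $\BB^n$ comes for free from the boundedness of $f$ near $0$, the Cauchy formula and homogeneity. Hartogs' lemma, in its almost-everywhere form because of the exceptional set $E$, then finishes.

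In your setting, smoothness at $0$ only yields a formal series $\sum Q_j$. So you replace the first step by matching Taylor coefficients on each slice, and you must manufacture the a priori bound yourself; your Baire-plus-Bernstein--Walsh step does this correctly. That step is in fact elementary here. Suppose $|Q_j|\leq M$ on $\BB^n(z_0,\delta)$, and set $v=(z-z_0)/|z-z_0|$. Applying the maximum principle to $Q_j(z_0+\lambda v)/\lambda^j$ on $|\lambda|\geq\delta$ gives $|Q_j(z)|\leq M(|z-z_0|/\delta)^j$ for $|z-z_0|\geq\delta$. This needs neither homogeneity nor extremal functions. Your Hartogs step then coincides with the paper's.

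Your route is the self-contained classical proof under the $C^\infty$ hypothesis. The paper's route instead trades smoothness for plurisubharmonicity and pays with the $L^2$-extension machinery to get holomorphy near $0$. In exchange it avoids the Baire step. That matters there: when only almost every slice is harmonic, your sets $E_M$ would merely cover a set of full measure, to which Baire's theorem does not directly apply.
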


\begin{lemma}
Let $\rho$ be a measurable function on $(0,1)$ such that $\int_0^r |\rho(s)| ds <+\infty$ for any $0<r<1$. Let $n$ be a positive integer. If
\begin{equation*}
	\int_0^r \rho(s) (r-s)^{n-1} ds = 0 
\end{equation*}
for any $0<r<1$, then $\rho=0$ almost everywhere on $(0,1)$.
\end{lemma}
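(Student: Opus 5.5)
The statement to prove is the last lemma: if $\int_0^r \rho(s)(r-s)^{n-1}\,ds=0$ for all $0<r<1$, then $\rho=0$ almost everywhere. The plan is induction on $n$, differentiating in $r$ to lower the exponent by one at each step. The base case $n=1$ reads $\int_0^r\rho(s)\,ds=0$ for all $r\in(0,1)$. Since $\rho\in L^1(0,r)$ for every $r<1$, the function $R(r)=\int_0^r\rho$ is absolutely continuous on each $[0,r_0]$ with $r_0<1$. By the Lebesgue differentiation theorem, $R'(r)=\rho(r)$ for almost every $r$. As $R\equiv0$, we get $\rho=0$ almost everywhere on $(0,1)$.

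For the inductive step, assume $n\geq2$ and set
\begin{equation*}
G_n(r)=\int_0^r\rho(s)(r-s)^{n-1}\,ds .
\end{equation*}
The aim is to show, via Fubini, that
\begin{equation*}
G_n(r)=(n-1)\int_0^r G_{n-1}(t)\,dt ,
\end{equation*}
where $G_{n-1}(t)=\int_0^t\rho(s)(t-s)^{n-2}\,ds$. To verify this, write $(r-s)^{n-1}=(n-1)\int_s^r (t-s)^{n-2}\,dt$ and swap the order of integration over the triangle $\{0<s<t<r\}$. The swap is legitimate because
\begin{equation*}
\int_0^r\!\!\int_s^r |\rho(s)|(t-s)^{n-2}\,dt\,ds \leq \frac{r^{n-1}}{n-1}\int_0^r|\rho|<+\infty .
\end{equation*}

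The same Fubini bound shows $G_{n-1}\in L^1(0,r)$ for each $r<1$. So the hypothesis $G_n\equiv0$ gives $\int_0^r G_{n-1}=0$ for all $r$, and the base-case argument yields $G_{n-1}=0$ almost everywhere. The induction hypothesis needs $G_{n-1}=0$ everywhere, so one more observation is required. For $n-1\geq1$, $G_{n-1}$ is continuous on $(0,1)$: for $n-1\geq2$ this follows by dominated convergence, since $(t-s)^{n-2}\mathbf 1_{s<t}$ is continuous in $t$ and bounded. For $n-1=1$, $G_1(t)=\int_0^t\rho$, which is continuous by absolute continuity. Hence $G_{n-1}\equiv0$ on $(0,1)$, and the induction hypothesis applied with $n-1$ gives $\rho=0$ almost everywhere.

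The only delicate points are the integrability needed for Fubini, using only local integrability of $\rho$ on $[0,r]$, and upgrading "almost everywhere" to "everywhere" through continuity before applying the induction hypothesis. Both are handled above, so I expect no real obstacle.

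An alternative route avoids induction. View $G_n$ as the convolution of $\rho\mathbf 1_{(0,r_0)}$ with $s^{n-1}\mathbf 1_{s>0}$ and invoke the Titchmarsh convolution theorem. This is heavier machinery, so I will use the iterated-integration argument.
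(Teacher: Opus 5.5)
Your proof is correct and follows essentially the paper's approach: induction on $n$, lowering the exponent by one by differentiating in $r$. The only difference is in how the key identity is justified. The paper computes $\eta'(r)=k\int_0^r\rho(s)(r-s)^{k-1}\,ds$ directly from difference-quotient estimates. You instead obtain the integrated form $G_n(r)=(n-1)\int_0^r G_{n-1}$ via Fubini, then pass to $G_{n-1}\equiv 0$ using the base case and the continuity of $G_{n-1}$.
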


\begin{proof}
We shall prove the lemma by induction on $n$. If $n=1$, then the lemma is a direct consequence of Lebesgue's differentiation theorem.

Now, we assume that the lemma has been proved for $n=k$, and we turn to the case of $n=k+1$. We consider
\begin{equation*}
\eta(r) = \int_0^r \rho(s) (r-s)^k ds
\end{equation*}
as a function of $r\in(0,1)$. Let $r\in(0,1)$ be fixed and $\Delta r>0$ be small enough so that $r+\Delta r<1$. Then
\begin{align*}
\frac{\eta(r+\Delta r)-\eta(r)}{\Delta r} & = \frac{1}{\Delta r} \int_r^{r+\Delta r} \rho(s) (r+\Delta r-s)^k ds \\
& + \int_0^r \rho(s) \frac{(r+\Delta r-s)^k-(r-s)^k}{\Delta r} ds =: \mathbf{I} + \mathbf{II}.
\end{align*}

Clearly,
\begin{equation*}
|\mathbf{I}| \leq (\Delta r)^{k-1} \int_r^{r+\Delta r} |\rho(s)| ds.
\end{equation*}
Since $k-1\geq0$ and $|\rho|$ is locally integrable near $r$, we have $\lim\limits_{\Delta r\to0} \mathbf{I} = 0$.

On the other hand,
\begin{equation*}
\mathbf{II} - k \int_0^r \rho(s) (r-s)^{k-1} ds = \sum_{j=2}^k
\begin{pmatrix} k \\ j \end{pmatrix}
\int_0^r \rho(s) (r-s)^{k-j}(\Delta r)^{j-1} ds,
\end{equation*}
and then
\begin{equation*}
\left| \mathbf{II} - k \int_0^r \rho(s) (r-s)^{k-1} ds \right| \leq \sum_{j=2}^k
\begin{pmatrix} k \\ j \end{pmatrix}
 r^{k-j}(\Delta r)^{j-1} \int_0^r |\rho(s)| ds.
\end{equation*}
As $\int_0^r |\rho(s)| ds < +\infty$, the right hand side converges to zero as $\Delta r\to0$. Therefore,
\begin{equation*}
0 = \lim_{\Delta r\to0} \frac{\eta(r+\Delta r)-\eta(r)}{\Delta r} = k \int_0^r \rho(s) (r-s)^{k-1} ds
\end{equation*}
for any fixed $0<r<1$. By the inductive hypothesis, $\rho=0$ almost everywhere on $(0,1)$.
\end{proof}

After a change of variable, we have the following corollary.

\begin{corollary} \label{Cor:RA}
Let $\eta$ be a measurable function on $(0,1)$ such that $\int_0^r |\eta(s)|s ds <+\infty$ for any $0<r<1$. Let $n$ be a positive integer. If
\begin{equation*}
	\int_0^r \eta(s) (r^2-s^2)^{n-1} sds = 0 
\end{equation*}
for any $0<r<1$, then $\eta=0$ almost everywhere on $(0,1)$.
\end{corollary}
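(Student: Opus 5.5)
The statement to prove is Corollary \ref{Cor:RA}. We reduce it to the preceding lemma by the substitution $u=s^2$, which converts $(r^2-s^2)^{n-1}s\,ds$ into a multiple of $(R-u)^{n-1}du$ with $R=r^2$. Concretely, define $\rho(u):=\eta(\sqrt u)$ for $u\in(0,1)$. This is measurable, being the composition of a measurable function with the continuous monotone map $u\mapsto\sqrt u$. Since $du=2s\,ds$, we get
\begin{equation*}
\int_0^{R}|\rho(u)|\,du=2\int_0^{\sqrt R}|\eta(s)|\,s\,ds<+\infty
\end{equation*}
for every $0<R<1$, because $\sqrt R<1$. So $\rho$ satisfies the integrability hypothesis of the lemma.

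Next we transfer the vanishing condition. For $0<R<1$, set $r=\sqrt R\in(0,1)$. The change of variables $u=s^2$ is valid for integrable functions because the absolute integrability was just checked. It gives
\begin{equation*}
\int_0^R\rho(u)(R-u)^{n-1}du=2\int_0^r\eta(s)(r^2-s^2)^{n-1}s\,ds=0 .
\end{equation*}
The lemma then yields $\rho=0$ almost everywhere on $(0,1)$.

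Finally we pull back to $\eta$. The set $E=\{s\in(0,1):\eta(s)\neq0\}$ is the image of $\{u:\rho(u)\neq0\}$ under $u\mapsto\sqrt u$. This map is locally Lipschitz on $(0,1)$, so it sends null sets to null sets. Alternatively, $\int_E s\,ds=\tfrac12|\{\rho\neq0\}|=0$ with $s>0$ forces $|E|=0$. Hence $\eta=0$ almost everywhere on $(0,1)$.

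No step is a real obstacle. The only points needing care are the measurability of $\rho$ and the null-set transfer in the last step, both handled by the weighted-measure identity $\int_E s\,ds=\tfrac12|\{\rho\neq0\}|$.
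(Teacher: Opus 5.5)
Your proof is correct and is the same argument as the paper's. The paper derives this corollary from the preceding lemma with the single remark ``after a change of variable,'' and your substitution $u=s^2$, $\rho(u)=\eta(\sqrt{u})$, with the integrability check and the null-set transfer, fills in exactly those details. One small correction: measurability of $\eta(\sqrt{u})$ does not follow from continuity and monotonicity of $u\mapsto\sqrt{u}$ alone; it holds because the inverse map $s\mapsto s^2$ is a locally Lipschitz homeomorphism of $(0,1)$, so it sends null sets to null sets and therefore Lebesgue measurable sets to Lebesgue measurable sets.
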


\section{Proof of Theorem \ref{Thm:CharPH}} \label{Sec3}

We only need to consider the case of $n\geq2$. The sufficiency part of Theorem \ref{Thm:CharPH} is obvious. In what follows, we divide the proof of the necessity part into several lemmas.

Consider the negative plurisubharmonic function $\psi = 2n\log|z|$ on $\BB^n$. It is clear that $e^{-\psi}$ is not integrable at $z=0$. For each $t\leq 0$, we denote by $\Omega_t = \{\psi<t\}$ the sublevel set of $\psi$ and
\begin{equation*}
	I(t) = \inf\left\{ \int_{\Omega_t} |F|^2e^{-\varphi}d\lambda: F\in\calO(\Omega_t),F(0)=1 \right\}
\end{equation*}
the minimal $L^2$ integral. For each $t\leq0$, we denote by $F_t\in\calO(\Omega_t)$ the unique minimizer for the infimum defining $I(t)$. In other words, $F_t$ is the minimal $L^2$ extension of the constant function $1$ from $S=\{0\}$ to $\Omega_t = \BB^n(0,e^{t/2n})$.

\begin{lemma} \label{Lemma1}
For any $t\leq 0$, we have $F_t \equiv F_0|_{\Omega_t}$ and
\begin{equation} \label{Eq:It}
I(t) = \int_{\Omega_t} |F_t|^2e^{-\varphi} d\lambda = \frac{\pi^n}{n!}e^{-\varphi(0)+t}.
\end{equation}
\end{lemma}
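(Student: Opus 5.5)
We work under the standing hypothesis of the necessity part, namely $K_{\BB^n}(0;e^{-\varphi})=\frac{n!}{\pi^n}e^{\varphi(0)}$. The idea is to squeeze $I(t)$ between the upper bound from the optimal extension theorem and a lower bound coming from the concavity of minimal $L^2$ integrals. Once $r\mapsto I(\log r)$ is shown to be linear, Theorem \ref{Thm:Linear} gives the rigidity statement $F_t\equiv F_0|_{\Omega_t}$.

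\textbf{Step 1: upper bound.} For each $t\leq0$, $\Omega_t=\BB^n(0,R_t)$ with $R_t=e^{t/2n}$, so $R_t^{2n}=e^t$. Corollary \ref{Cor:OptExt1}, applied with $\phi=\varphi$ on $\BB^n(0,R_t)$, produces $F\in\calO(\Omega_t)$ with $F(0)=1$ and
\begin{equation*}
\int_{\Omega_t}|F|^2e^{-\varphi}d\lambda\leq \frac{\pi^n}{n!}e^{t}e^{-\varphi(0)}.
\end{equation*}
Hence $I(t)\leq \frac{\pi^n}{n!}e^{-\varphi(0)+t}$ for all $t\leq 0$.

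\textbf{Step 2: the value at $t=0$.} By the usual duality for the Bergman kernel, $I(0)=1/K_{\BB^n}(0;e^{-\varphi})$. To see this, note that for any $f$ with $f(0)=1$ the normalized function $f/\|f\|$ gives $K\geq 1/\|f\|^2$, so $K\geq 1/I(0)$. Conversely, any $g$ with $\|g\|\leq1$ and $g(0)\neq0$ gives $I(0)\leq \|g/g(0)\|^2\leq 1/|g(0)|^2$, so $K\leq 1/I(0)$. The hypothesis therefore gives $I(0)=\frac{\pi^n}{n!}e^{-\varphi(0)}$. The assumption of Theorem \ref{Thm:Concave}, existence of some $F$ with $F(0)=1$ and finite integral, holds by Corollary \ref{Cor:OptExt1}. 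Also $\psi=2n\log|z|<0$ is plurisubharmonic on $\BB^n$ and $e^{-\psi}$ is non-integrable at $0$, so the setting of Theorems \ref{Thm:Concave} and \ref{Thm:Linear} applies.

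\textbf{Step 3: concavity forces linearity.} Write $r=e^t\in(0,1]$ and $h(r)=I(\log r)$. By Theorem \ref{Thm:Concave}, $h$ is concave on $(0,1]$, and we know $\lim_{r\to0^+}h(r)=0$. Extending $h$ by $h(0)=0$ keeps it concave on $[0,1]$, since it is then a continuous extension at the endpoint. Concavity gives, for $r\in(0,1]$,
\begin{equation*}
h(r)\geq (1-r)h(0)+r\,h(1)=r\,\frac{\pi^n}{n!}e^{-\varphi(0)}.
\end{equation*}
Step 1 gives the reverse inequality $h(r)\leq r\frac{\pi^n}{n!}e^{-\varphi(0)}$. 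Hence $h(r)=\frac{\pi^n}{n!}e^{-\varphi(0)}r$, which is exactly \eqref{Eq:It}.

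Since $h$ is linear, Theorem \ref{Thm:Linear} yields $F_t\equiv F_0|_{\Omega_t}$ for all $t<0$; the case $t=0$ is trivial. Finally, $I(t)=\int_{\Omega_t}|F_t|^2e^{-\varphi}d\lambda$ holds by the definition of $F_t$ as the minimizer.

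The only delicate point is the concavity extension to $r=0$. It relies on $I(t)\to0$ as $t\to-\infty$, which was noted in the preliminaries. Everything else reduces to the cited results.
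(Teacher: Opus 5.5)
Your proposal is correct and follows the paper's proof step for step. It uses the upper bound $I(t)\leq\frac{\pi^n}{n!}e^{-\varphi(0)+t}$ from Corollary \ref{Cor:OptExt1}, the identification $I(0)=1/K_{\BB^n}(0;e^{-\varphi})$ with the equality hypothesis, the lower bound $r\,I(0)\leq I(\log r)$ from Guan's concavity (Theorem \ref{Thm:Concave}) together with $I(t)\to0$, and then Theorem \ref{Thm:Linear} applied to the resulting linear function. Your extra justifications (the duality argument for $I(0)=1/K$ and the concave extension of $r\mapsto I(\log r)$ to $r=0$) are details the paper leaves implicit, and they are correct.
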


\begin{proof}
According to Guan-Zhou's optimal $L^2$ extension theorem (Corollary \ref{Cor:OptExt1}), for each $t\leq0$, there exists a holomorphic function $f_t$ on the ball $\BB^n(0,e^{t/2n}) = \Omega_t$ such that $f_t(0)=1$ and
\begin{equation*}
	\int_{\Omega_t} |f_t|^2e^{-\varphi} d\lambda \leq \frac{\pi^n}{n!}e^{-\varphi(0)+t}.
\end{equation*}
Consequently,
\begin{equation} \label{Eq:UBIt}
	I(t) \leq \frac{\pi^n}{n!}e^{-\varphi(0)+t}, \quad t\leq0.
\end{equation}

By Guan's concavity property for minimal $L^2$ integrals (Theorem \ref{Thm:Concave}),
\begin{equation*}
J(r) := I(\log r)
\end{equation*}
is a concave increasing function of $r\in(0,1]$. Since
\begin{equation*}
\lim_{r\to0^+} J(r) = \lim_{t\to-\infty} I(t) = 0,
\end{equation*}
the concavity implies that
\begin{equation} \label{Eq:LBJr}
	rJ(1) \leq J(r), \quad 0<r\leq 1.
\end{equation}

By the definition of Bergman kernels and the assumption of Theorem \ref{Thm:CharPH},
\begin{equation} \label{Eq:I0}
	I(0) = \frac{1}{K_{\BB^n}(0;e^{-\varphi})} = \frac{\pi^n}{n!}e^{-\varphi(0)}.
\end{equation}
Combining \eqref{Eq:UBIt}, \eqref{Eq:LBJr} and \eqref{Eq:I0}, we have
\begin{equation*}
	\frac{\pi^n}{n!}e^{-\varphi(0)}\cdot r = rJ(1) \leq J(r) \leq \frac{\pi^n}{n!}e^{-\varphi(0)}\cdot r.
\end{equation*}
It follows that
\begin{equation*} 
	J(r) \equiv \frac{\pi^n}{n!}e^{-\varphi(0)}\cdot r,
\end{equation*}
i.e., $J(r)$ is a linear function of $r\in(0,1]$. According to Xu-Zhou's necessary condition for the linearity of minimal $L^2$ integrals (Theorem \ref{Thm:Linear}),
\begin{equation*} 
	F_t \equiv F_0|_{\Omega_t}, \quad t<0.
\end{equation*}
This completes the proof.
\end{proof}

In the following, we will show that $w\mapsto\varphi(w\xi)$ is a harmonic function on $\DD$ for any $\xi\in\Sp^{2n-1}$. For convenience, we denote by
\begin{equation*}
F_\xi(w) = F_0(w\xi) \quad\text{and}\quad \varphi_\xi(w) = \varphi(w\xi)
\end{equation*}
the slices of $F_0\in\calO(\BB^n)$ and $\varphi\in\text{psh}(\BB^n)$ in the affine disc $\xi\DD = \{w\xi: w\in\DD\}$.

\begin{lemma} \label{Lemma2}
For any $\xi\in\Sp^{2n-1}$ and any $0<r\leq1$, the function $F_\xi|_{\DD(0,r)}$ is the unique minimizer for the following infimum:
\begin{equation} \label{Eq:Inf1}
\inf\left\{ \int_{\DD(0,r)} |g(w)|^2 (r^2-|w|^2)^{n-1} e^{-\varphi_\xi(w)} d\lambda: g\in\calO(\DD(0,r)), g(0)=1 \right\}.
\end{equation}
Moreover,
\begin{equation} \label{Eq:Fxi}
\int_{\DD(0,r)} |F_\xi(w)|^2 (r^2-|w|^2)^{n-1} e^{-\varphi_\xi(w)} d\lambda = \frac{\pi}{n} r^{2n} e^{-\varphi(0)}.
\end{equation}
\end{lemma}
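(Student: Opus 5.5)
We first compute the slice integral of $F_0$ by integrating in polar coordinates. Lemma \ref{Lemma1} gives, for every $0<r\leq 1$,
\[
\int_{\BB^n(0,r)}|F_0|^2e^{-\varphi}d\lambda=\frac{\pi^n}{n!}r^{2n}e^{-\varphi(0)}.
\]
Write $z=w\xi$ with $w\in\CC$ and $\xi\in\Sp^{2n-1}$. The measure decomposes as $d\lambda(z)=\frac{1}{2\pi}|w|^{2n-2}d\lambda(w)\,d\sigma(\xi)$, each complex line being counted once modulo the circle action. This expresses the ball integral as an average over $\xi$ of $\int_{\DD(0,r)}|F_\xi|^2e^{-\varphi_\xi}|w|^{2n-2}d\lambda(w)$. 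That is the wrong weight: we want $(r^2-|w|^2)^{n-1}$ rather than $|w|^{2n-2}$. The right decomposition instead slices $\BB^n(0,r)$ by the discs $\{w e_1+z''\}$. Since we need a statement for each $\xi$, we should not rely on averaging. Instead we will compare with the extension theorem.

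\textbf{Upper bound by minimality of $F_0$.} Fix $\xi$ and $r$, and rotate by a unitary map so that $\xi=e_1$; unitary maps preserve balls and Lebesgue measure. Let $g\in\calO(\DD(0,r))$ with $g(0)=1$ and with finite weighted integral $A(g)$ in \eqref{Eq:Inf1}. By Corollary \ref{Cor:OptExt2} (with $n\geq 2$), $g$ extends to $G\in\calO(\BB^n(0,r))$ with $G(w\xi)=g(w)$, hence $G(0)=1$, and
\[
\int_{\BB^n(0,r)}|G|^2e^{-\varphi}\leq\frac{\pi^{n-1}}{(n-1)!}A(g).
\]
Since $F_t=F_0|_{\Omega_t}$ with $\Omega_t=\BB^n(0,r)$ is the minimizer, we get $\frac{\pi^n}{n!}r^{2n}e^{-\varphi(0)}\leq \frac{\pi^{n-1}}{(n-1)!}A(g)$, that is,
\[
A(g)\geq \frac{\pi}{n}r^{2n}e^{-\varphi(0)}.
\]
Conversely, Corollary \ref{Cor:OptExt3} applied to $\varphi_\xi$ (note $\varphi_\xi(0)=\varphi(0)>-\infty$) produces $g$ with $A(g)\leq\frac{\pi}{n}r^{2n}e^{-\varphi(0)}$. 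So the infimum \eqref{Eq:Inf1} equals $\frac{\pi}{n}r^{2n}e^{-\varphi(0)}$ exactly, and it is attained.

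\textbf{Identifying the minimizer with $F_\xi$.} It remains to show that $F_\xi|_{\DD(0,r)}$ is this minimizer; this is the main obstacle. By Fubini, slicing $\BB^n(0,r)$ by the parallel discs $\{(w,z''):|w|^2<r^2-|z''|^2\}$ is not directly helpful. Instead, integrate over all directions: by the polar-coordinate formula above, for each $\xi$ consider
\[
h_\xi(s):=\frac{1}{2\pi}\int_0^{2\pi}|F_\xi(se^{i\theta})|^2e^{-\varphi_\xi(se^{i\theta})}d\theta.
\]
Then
\[
\int_{\BB^n(0,r)}|F_0|^2e^{-\varphi}=c_n\int_{\Sp^{2n-1}}\int_0^r h_\xi(s)s^{2n-1}\,ds\,d\sigma.
\]
Let $m_\xi(r):=\int_0^r 2\pi h_\xi(s)(r^2-s^2)^{n-1}s\,ds$, which is the slice integral in \eqref{Eq:Fxi}. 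The lower bound gives $m_\xi(r)\geq\frac{\pi}{n}r^{2n}e^{-\varphi(0)}$ for every $\xi$. We then check that the $\sigma$-average of $m_\xi(r)$ equals $\frac{\pi}{n}r^{2n}e^{-\varphi(0)}$. Using the identity $\int_{\BB^n(0,r)}u\,d\lambda=C\int_{\Sp^{2n-1}}\int_{\DD(0,r)}u(w\xi)(r^2-|w|^2)^{n-1}d\lambda(w)\,d\sigma(\xi)$, the constant is fixed by $u\equiv 1$. Concretely, differentiate $n-1$ times in $r^2$, or compare the two radial kernels via Corollary \ref{Cor:RA}-type manipulations; a direct computation shows both sides are linear in the radial profile with matching totals. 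With a nonnegative integrand $m_\xi(r)-\frac{\pi}{n}r^{2n}e^{-\varphi(0)}$ of average zero, equality \eqref{Eq:Fxi} holds for $\sigma$-a.e.\ $\xi$.

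\textbf{Passing to every $\xi$.} For a.e.\ $\xi$, $F_\xi$ attains the infimum, so by uniqueness it is the minimizer. To extend to every $\xi$, we use lower semicontinuity of $\xi\mapsto m_\xi(r)$ (Fatou, since $\varphi$ is upper semicontinuous and $F_0$ is continuous). Approximating an arbitrary $\xi$ by good directions gives $m_\xi(r)\leq\frac{\pi}{n}r^{2n}e^{-\varphi(0)}$, while the lower bound gives the reverse inequality. Hence equality holds for every $\xi$, and uniqueness of the minimizer (the lemma on minimal $L^2$ integrals) identifies $F_\xi|_{\DD(0,r)}$ as the unique minimizer of \eqref{Eq:Inf1}.
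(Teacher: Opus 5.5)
Your first step is correct and is the same inequality chain the paper uses: extending an admissible $g$ by Corollary \ref{Cor:OptExt2} and comparing with $I(2n\log r)$ from Lemma \ref{Lemma1} shows that the infimum \eqref{Eq:Inf1} equals $\frac{\pi}{n}r^{2n}e^{-\varphi(0)}$. Corollary \ref{Cor:OptExt3} then shows it is attained. The LSC step is also sound: Fatou's lemma plus upper semicontinuity of $\varphi$ makes $\xi\mapsto m_\xi(r)$ lower semicontinuous, and full-measure sets are dense.

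\textbf{The gap.} The averaging step, as justified, fails. The identity
\[
\int_{\BB^n(0,r)}u\,d\lambda=C\int_{\Sp^{2n-1}}\int_{\DD(0,r)}u(w\xi)(r^2-|w|^2)^{n-1}d\lambda(w)\,d\sigma(\xi)
\]
is false for general $u$. In terms of the spherical mean $U(s)$ of $u$, the left side is $\int_0^rU(s)s^{2n-1}ds$. The right side is a multiple of $\int_0^rU(s)(r^2-s^2)^{n-1}s\,ds$. For $n\geq2$ these kernels are not proportional: for $u$ the indicator of $\BB^n(0,\epsilon)$ the two sides are of order $\epsilon^{2n}$ and $\epsilon^{2}$. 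Two linear functionals of the radial profile that agree on constants need not agree, so ``matching totals'' proves nothing.

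\textbf{How to repair it.} What rescues your claim is that Lemma \ref{Lemma1} holds at every radius. Set $M(s)=\int_{\Sp^{2n-1}}|F_0(s\xi)|^2e^{-\varphi(s\xi)}d\sigma(\xi)$. Differentiating $\int_0^rM(s)s^{2n-1}ds=\frac{\pi^n}{n!}r^{2n}e^{-\varphi(0)}$ in $r$ gives $M(s)=|\Sp^{2n-1}|e^{-\varphi(0)}$ for a.e.\ $s$. Integrating against $(r^2-s^2)^{n-1}s$, using invariance of $\sigma$ under $\xi\mapsto e^{i\theta}\xi$, shows that the $\sigma$-average of $m_\xi(r)$ is $\frac{\pi}{n}r^{2n}e^{-\varphi(0)}$. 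With this fix your remaining steps go through:
\begin{itemize}
\item equality holds for a.e.\ $\xi$;
\item lower semicontinuity and density of the good directions give $m_\xi(r)\leq\frac{\pi}{n}r^{2n}e^{-\varphi(0)}$ for every $\xi$;
\item uniqueness of the minimizer finishes.
\end{itemize}

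\textbf{A simpler route.} The whole detour is unnecessary. You called identifying the minimizer the main obstacle, but your first step already resolves it once applied to the minimizer $\hat g$ of \eqref{Eq:Inf1} itself, which is what the paper does. Its extension $G$ satisfies
\[
\int_{\BB^n(0,r)}|G|^2e^{-\varphi}d\lambda\leq\frac{\pi^{n-1}}{(n-1)!}\cdot\frac{\pi}{n}r^{2n}e^{-\varphi(0)}=I(2n\log r),
\]
so $G$ is a minimizer for $I(2n\log r)$. By uniqueness and Lemma \ref{Lemma1}, $G=F_0|_{\BB^n(0,r)}$, and restricting to the slice gives $\hat g=F_\xi|_{\DD(0,r)}$. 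This identifies the minimizer for each individual $\xi$ directly, with no averaging over directions and no semicontinuity argument.
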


\begin{proof}
After a unitary change of coordinates, we assume that $\xi=(1,0,\cdots,0)$. We denote by $\hat{g}\in\calO(\DD(0,r))$ the unique minimizer for the infimum \eqref{Eq:Inf1}.

By Guan-Zhou's optimal $L^2$ extension theorem (Corollary \ref{Cor:OptExt3}) and Montel's theorem, such $\hat{g}$ always exists and satisfies the estimate
\begin{equation*}
\int_{\DD(0,r)} |\hat{g}(w)|^2 (r^2-|w|^2)^{n-1} e^{-\varphi(w,0,\cdots,0)} d\lambda \leq \frac{\pi}{n} r^{2n} e^{-\varphi(0)}.
\end{equation*}
By Guan-Zhou's optimal $L^2$ extension theorem again (Corollary \ref{Cor:OptExt2}), there exists a function $G\in\calO(\BB^n(0,r))$ such that $G(w,0,\cdots,0)=\hat{g}(w)$ on $\DD(0,r)$ and
\begin{align}
\label{Eq:pos1} \int_{\BB^n(0,r)} |G|^2e^{-\varphi}d\lambda & \leq \frac{\pi^{n-1}}{(n-1)!} \int_{\DD(0,r)} |\hat{g}(w)|^2 (r^2-|w|^2)^{n-1} e^{-\varphi(w,0,\cdots,0)} d\lambda \\
\label{Eq:pos2} & \leq \frac{\pi^n}{n!} r^{2n} e^{-\varphi(0)}.
\end{align}

As $\BB^n(0,r) = \Omega_{2n\log r}$ and $G(0)=\hat{g}(0)=1$, equality \eqref{Eq:It} yields that
\begin{equation*}
	\int_{\BB^n(0,r)} |G|^2e^{-\varphi}d\lambda \geq I(2n\log r) = \frac{\pi^n}{n!} r^{2n} e^{-\varphi(0)}.
\end{equation*}
Consequently, the inequalities in \eqref{Eq:pos1} and \eqref{Eq:pos2} are indeed equalities. That is
\begin{equation*}
	\int_{\BB^n(0,r)} |G|^2e^{-\varphi}d\lambda = \frac{\pi^n}{n!} r^{2n} e^{-\varphi(0)}
\end{equation*}
and
\begin{equation*} 
	\int_{\DD(0,r)} |\hat{g}(w)|^2 (r^2-|w|^2)^{n-1} e^{-\varphi(w,0,\cdots,0)} d\lambda = \frac{\pi}{n} r^{2n} e^{-\varphi(0)}.
\end{equation*}
In particular, $G\in\calO(\BB^n(0,r))$ is a minimizer for the infimum defining $I(2n\log r)$. By uniqueness of the minimizer and Lemma \ref{Lemma1}, we have
\begin{equation*}
	G \equiv F_0|_{\BB^n(0,r)}.
\end{equation*}
Therefore, $\hat{g}(w) = G(w,0,\cdots,0) = F_0(w,0,\cdots,0)$ for any $w\in\DD(0,r)$, i.e.,
\begin{equation*}
\hat{g}=F_\xi|_{\DD(0,r)}.
\end{equation*}
This completes the proof.
\end{proof}

\begin{lemma} \label{Lemma3}
For any $\xi\in\Sp^{2n-1}$, we have
\begin{equation*}
\int_\DD |F_\xi(w)|^2 e^{-\varphi_\xi(w)} d\lambda = \pi e^{-\varphi(0)}.
\end{equation*}
\end{lemma}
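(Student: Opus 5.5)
We derive the unweighted-by-$(r^2-|w|^2)^{n-1}$ identity from \eqref{Eq:Fxi} by a differentiation-in-$r$ argument, using Corollary \ref{Cor:RA} to pass from the weighted integrals to the radial distribution. Fix $\xi$ and write $h(w)=|F_\xi(w)|^2e^{-\varphi_\xi(w)}\geq0$. We introduce its circular mean
\begin{equation*}
\eta(s)=\int_0^{2\pi} h(se^{i\theta})\,d\theta .
\end{equation*}
This is a measurable function of $s\in(0,1)$ with values in $[0,+\infty]$.

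In polar coordinates, \eqref{Eq:Fxi} reads
\begin{equation*}
\int_0^r \eta(s)(r^2-s^2)^{n-1}s\,ds=\frac{\pi}{n}r^{2n}e^{-\varphi(0)},\qquad 0<r\leq1.
\end{equation*}
A simple computation gives $\int_0^r 2(r^2-s^2)^{n-1}s\,ds=r^{2n}/n$. Hence the right-hand side equals
\begin{equation*}
\int_0^r \pi e^{-\varphi(0)}\cdot 2\,(r^2-s^2)^{n-1}s\,ds .
\end{equation*}
Subtracting, the function $\tilde\eta(s):=\eta(s)-2\pi e^{-\varphi(0)}$ satisfies
\begin{equation*}
\int_0^r\tilde\eta(s)(r^2-s^2)^{n-1}s\,ds=0
\end{equation*}
for every $0<r<1$.

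Before applying Corollary \ref{Cor:RA}, we check its integrability hypothesis $\int_0^r|\tilde\eta(s)|s\,ds<\infty$. For $s<r$ and any $r<r'<1$, we have $(r'^2-s^2)^{n-1}\geq(r'^2-r^2)^{n-1}>0$. So \eqref{Eq:Fxi} at radius $r'$ bounds $\int_0^r\eta(s)s\,ds$, and $\eta$ is locally integrable against $s\,ds$. The same bound also guarantees that the subtraction above is legitimate, since both integrals are finite. Corollary \ref{Cor:RA} then gives $\eta(s)=2\pi e^{-\varphi(0)}$ for almost every $s\in(0,1)$.

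Finally,
\begin{equation*}
\int_\DD h\,d\lambda=\int_0^1\eta(s)s\,ds=2\pi e^{-\varphi(0)}\cdot\tfrac12=\pi e^{-\varphi(0)},
\end{equation*}
which is the claim. The only delicate point is the integrability check, and it is handled by using \eqref{Eq:Fxi} at a slightly larger radius as described. Everything else is Fubini/Tonelli in polar coordinates, valid because $h\geq0$ is measurable ($\varphi_\xi$ is upper semicontinuous and $F_\xi$ is continuous).
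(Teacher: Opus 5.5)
Your proof is correct and follows the paper's argument essentially verbatim. You pass to circular means, rewrite \eqref{Eq:Fxi} radially, subtract the constant $2\pi e^{-\varphi(0)}$ using $\int_0^r (r^2-s^2)^{n-1}s\,ds = r^{2n}/(2n)$, check local integrability via \eqref{Eq:Fxi} at a larger radius, and conclude with Corollary \ref{Cor:RA}; the only cosmetic difference is that the paper takes the larger radius to be $1$ directly, which is allowed since \eqref{Eq:Fxi} holds for $0<r\leq1$.
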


\begin{proof}
For $0\leq s\leq1$, we define
\begin{equation*}
	\alpha(s) := \int_0^{2\pi} |F_\xi(se^{i\theta})|^2 e^{-\varphi_\xi(se^{i\theta})} d\theta.
\end{equation*}
Then equality \eqref{Eq:Fxi} is equivalent to
\begin{equation*}
	\int_0^r \alpha(s) (r^2-s^2)^{n-1}s ds = 2\pi e^{-\varphi(0)} \cdot \frac{r^{2n}}{2n}, \quad 0<r\leq 1.
\end{equation*}
Since
\begin{equation*}
\int_0^r (r^2-s^2)^{n-1}s ds = \frac{r^{2n}}{2n},
\end{equation*} 
the equation can be reformulated as
\begin{equation*}
	\int_0^r \big(\alpha(s) - 2\pi e^{-\varphi(0)}\big) (r^2-s^2)^{n-1} sds = 0, \quad 0<r\leq 1.
\end{equation*}

By the definition of $\alpha(s)$ and equality \eqref{Eq:Fxi}, for any $0<r<1$ we have
\begin{align*}
\int_0^r |\alpha(s)| sds & =
\int_{\DD(0,r)} |F_\xi(w)|^2 e^{-\varphi_\xi(w)} d\lambda \\
& \leq \frac{1}{(1-r^2)^{n-1}} \int_\DD |F_\xi(w)|^2 (1-|w|^2)^{n-1} e^{-\varphi_\xi(w)} d\lambda < +\infty.
\end{align*}
Applying Corollary \ref{Cor:RA} with $\eta(s) = \alpha(s) - 2\pi e^{-\varphi(0)}$, we conclude that
\begin{equation*}
	\alpha(s) = 2\pi e^{-\varphi(0)}
\end{equation*}
for almost every $s\in(0,1)$. Therefore,
\begin{equation*}
	\int_\DD |F_\xi(w)|^2 e^{-\varphi_\xi(w)} d\lambda = \int_0^1 \alpha(s)sds = \pi e^{-\varphi(0)}.
\end{equation*}
This completes the proof.
\end{proof}

\begin{lemma} \label{Lemma4}
For any $\xi\in\Sp^{2n-1}$, the function $F_\xi$ is the unique minimizer for the following infimum:
\begin{equation}
	\inf\left\{ \int_\DD |f(w)|^2 e^{-\varphi_\xi(w)} d\lambda: f\in\calO(\DD), f(0)=1 \right\}.
\end{equation}
\end{lemma}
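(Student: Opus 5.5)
We argue by the orthogonality criterion, Lemma \ref{Lemma:Ortho}, applied to the weight $\varphi_\xi$ on $\DD$ at $z_0=0$. By Lemma \ref{Lemma3}, $F_\xi\in A^2(\DD;e^{-\varphi_\xi})$ and $F_\xi(0)=1$. It therefore suffices to show
\begin{equation*}
\int_\DD F_\xi\overline{g}\,e^{-\varphi_\xi}\,d\lambda=0
\end{equation*}
for every $g\in A^2(\DD;e^{-\varphi_\xi})$ with $g(0)=0$. Uniqueness of the minimizer is already known from the general existence--uniqueness lemma.

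\textbf{Orthogonality with radial weights.} By Lemma \ref{Lemma2}, for each $0<r\leq1$ the restriction $F_\xi|_{\DD(0,r)}$ minimizes \eqref{Eq:Inf1}. This is a minimal $L^2$ problem on $\DD(0,r)$ with the upper semi-continuous weight $\varphi_\xi-(n-1)\log(r^2-|w|^2)$. Lemma \ref{Lemma:Ortho}, applied to this weight, gives
\begin{equation*}
\int_{\DD(0,r)} F_\xi(w)\overline{g(w)}\,(r^2-|w|^2)^{n-1}e^{-\varphi_\xi(w)}\,d\lambda=0
\end{equation*}
for every $g\in\calO(\DD)$ with $g(0)=0$ that is in $L^2$ of that weight on $\DD(0,r)$. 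Any $g\in A^2(\DD;e^{-\varphi_\xi})$ qualifies for every $0<r\leq 1$, since $(r^2-|w|^2)^{n-1}\leq1$.

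\textbf{Reduction to Corollary \ref{Cor:RA}.} Set
\begin{equation*}
\beta(s)=\int_0^{2\pi}F_\xi(se^{i\theta})\overline{g(se^{i\theta})}\,e^{-\varphi_\xi(se^{i\theta})}\,d\theta .
\end{equation*}
By Cauchy--Schwarz, $\int_0^r|\beta(s)|s\,ds\leq \|F_\xi\|\,\|g\|<+\infty$, using Lemma \ref{Lemma3} for the finiteness of $\|F_\xi\|$. The orthogonality relations then read
\begin{equation*}
\int_0^r\beta(s)(r^2-s^2)^{n-1}s\,ds=0,\qquad 0<r<1 .
\end{equation*}
Corollary \ref{Cor:RA}, applied separately to the real and imaginary parts of $\beta$, yields $\beta=0$ almost everywhere on $(0,1)$. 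Hence $\int_\DD F_\xi\overline{g}e^{-\varphi_\xi}\,d\lambda=\int_0^1\beta(s)s\,ds=0$, and Lemma \ref{Lemma:Ortho} concludes the proof.

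\textbf{Main point to check.} The delicate step is justifying the orthogonality for each $r$. On $\DD(0,r)$ the weight $(r^2-|w|^2)^{n-1}$ degenerates only at the boundary, so $A^2$ membership of $F_\xi$ and $g$ is clear: $F_\xi$ satisfies \eqref{Eq:Fxi}, and $g$ is controlled as above. It is also important that Lemma \ref{Lemma2} holds for \emph{all} $r\in(0,1]$ and not merely at $r=1$; otherwise the family of identities needed for Corollary \ref{Cor:RA} would not be available.
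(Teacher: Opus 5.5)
Your proof is correct and is essentially the paper's argument: orthogonality via Lemma \ref{Lemma:Ortho}, obtained from the weighted orthogonality relations on $\DD(0,r)$ that Lemma \ref{Lemma2} supplies for every $0<r\leq1$, and then reduced to Corollary \ref{Cor:RA} applied to the circle averages $\beta(s)$. The only cosmetic differences are these: you split $\beta$ into real and imaginary parts before invoking Corollary \ref{Cor:RA}, whereas the paper applies it directly to the complex-valued $\beta$. You also bound $\int_0^r|\beta(s)|s\,ds$ by the global norms via Lemma \ref{Lemma3}, rather than by the norms on $\DD(0,r)$.
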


\begin{proof}
By Lemma \ref{Lemma:Ortho}, it is sufficient to show that
\begin{equation*}
	\int_\DD F_\xi(w) \overline{h(w)} e^{-\varphi_\xi(w)} d\lambda = 0
\end{equation*}
for any $h\in\calO(\DD)$ with $h(0)=0$ and
\begin{equation*}
\int_\DD |h|^2e^{-\varphi_\xi} d\lambda <+\infty.
\end{equation*}

According to Lemma \ref{Lemma2}, the function $F_\xi|_{\DD(0,r)}$ is the unique minimizer for the infimum \eqref{Eq:Inf1}. Since
\begin{equation*}
	\int_{\DD(0,r)} |h(w)|^2 (r^2-|w|^2)^{n-1} e^{-\varphi_\xi(w)} d\lambda \leq \int_\DD |h|^2e^{-\varphi_\xi} d\lambda < +\infty,
\end{equation*}
it follows from the minimal property of $F_\xi|_{\DD(0,r)}$ that
\begin{equation} \label{Eq:Pair}
	\int_{\DD(0,r)} F_\xi(w) \overline{h(w)} (r^2-|w|^2)^{n-1} e^{-\varphi_\xi(w)} d\lambda = 0, \quad 0<r\leq1.
\end{equation}
For $0\leq s\leq1$, we define
\begin{equation*}
	\beta(s) := \int_0^{2\pi} F_\xi(se^{i\theta}) \overline{h(se^{i\theta})} e^{-\varphi_\xi(se^{i\theta})} d\theta.
\end{equation*}
Then equality \eqref{Eq:Pair} is equivalent to
\begin{equation*}
	\int_0^r \beta(s) (r^2-s^2)^{n-1}s ds = 0, \quad 0<r\leq 1.
\end{equation*}

For any $0<r<1$, it is clear that
\begin{align*}
	\int_0^r |\beta(s)| sds \leq &\, \int_0^r \left( \int_0^{2\pi} |F_\xi(se^{i\theta})| |h(se^{i\theta})| e^{-\varphi_\xi(se^{i\theta})} d\theta \right) sds \\
	= &\, \int_{\DD(0,r)} |F_\xi(w)| |h(w)| e^{-\varphi_\xi(w)} d\lambda \\
	\leq &\, \left( \int_{\DD(0,r)} |F_\xi|^2 e^{-\varphi_\xi} d\lambda \right)^{1/2} \left(  \int_{\DD(0,r)} |h|^2 e^{-\varphi_\xi} d\lambda \right)^{1/2} < +\infty.
\end{align*}
Applying Corollary \ref{Cor:RA} to $\beta(s)$, we conclude that $\beta(s)=0$ for almost every $s\in(0,1)$. Consequently,
\begin{equation*}
	\int_\DD F_\xi(w) \overline{h(w)} e^{-\varphi_\xi(w)} d\lambda = \int_0^1 \beta(s) sds = 0.
\end{equation*}
This completes the proof.
\end{proof}

Combining the conclusions of Lemma \ref{Lemma3} and \ref{Lemma4}, we see that
\begin{align*}
\inf\left\{ \int_\DD |f|^2 e^{-\varphi_\xi} d\lambda : f\in\calO(\DD), f(0)=1 \right\} \\
= \int_\DD |F_\xi|^2 e^{-\varphi_\xi} d\lambda = \pi e^{-\varphi(0)}.
\end{align*}
Therefore,
\begin{equation*}
K_\DD(0;e^{-\varphi_\xi}) = \frac{1}{\pi} e^{\varphi_\xi(0)}.
\end{equation*}
According to Theorem \ref{Thm:Harm}, $\varphi_\xi(w) = \varphi(w\xi)$ is a harmonic function on $\DD$.

If $\varphi$ is smooth near $0$, then we can complete the proof by applying Forelli's theorem (Theorem \ref{Thm:Forelli}). However, we need further arguments in the general case.

\begin{lemma}
We have $\varphi \equiv \varphi(0) + 2\log|F_0|$ on $\BB^n$.
\end{lemma}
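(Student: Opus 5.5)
The plan is to prove the identity one slice at a time, using two facts already established for every $\xi\in\Sp^{2n-1}$. First, $\varphi_\xi$ is harmonic on $\DD$ (this follows from Theorem \ref{Thm:Harm}). Second, $F_\xi$ is the unique minimizer of the weighted disc problem in Lemma \ref{Lemma4}. So it is enough to compute that minimizer explicitly when the weight is harmonic.

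Fix $\xi$. Since $\varphi_\xi$ is real-valued and harmonic on the simply connected domain $\DD$, it has a harmonic conjugate. Hence there is $u_\xi\in\calO(\DD)$ with $\varphi_\xi=2\Re u_\xi$, and we normalize so that $u_\xi(0)=\varphi(0)/2$. Now take any $f\in\calO(\DD)$ with $f(0)=1$ and put $g=fe^{-u_\xi}$. Then
\begin{equation*}
\int_\DD|f|^2e^{-\varphi_\xi}d\lambda=\int_\DD|g|^2d\lambda\geq\pi|g(0)|^2=\pi e^{-\varphi(0)},
\end{equation*}
by the mean value property of the subharmonic function $|g|^2$. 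Equality holds precisely when $g$ is constant. One checks directly that the candidate $f_\xi:=e^{u_\xi-u_\xi(0)}$ has finite integral, equal to $\pi e^{-\varphi(0)}$. It is therefore the minimizer, and by the uniqueness in Lemma \ref{Lemma4} we get $F_\xi=e^{u_\xi-u_\xi(0)}$ on $\DD$.

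Taking $2\log|\cdot|$ of this identity gives
\begin{equation*}
2\log|F_0(w\xi)|=2\Re u_\xi(w)-\varphi(0)=\varphi(w\xi)-\varphi(0),\qquad w\in\DD.
\end{equation*}
Every $z\in\BB^n\setminus\{0\}$ can be written as $w\xi$ with $w=|z|$ and $\xi=z/|z|$, so the identity $\varphi=\varphi(0)+2\log|F_0|$ holds on $\BB^n\setminus\{0\}$. At the origin it holds trivially, since $F_0(0)=1$.

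A useful byproduct is that $F_0$ has no zeros on $\BB^n$. Indeed, every point lies on some slice, and on each slice $F_\xi$ is an exponential. This lets the subsequent argument conclude that $\log|F_0|$, and hence $\varphi$, is pluriharmonic.

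The only delicate point is to make sure the uniqueness statement of Lemma \ref{Lemma4} is being applied inside the same class of competitors. That class is all holomorphic $f$ with $f(0)=1$, with integrals allowed to be infinite, and the explicit candidate belongs to it. Once this is checked, the argument is a direct computation; no further regularity of $\varphi$ at the origin is needed, because the formula is obtained pointwise on every slice.
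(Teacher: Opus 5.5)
Your proof is correct and is essentially the paper's argument: both reduce, via $g=F_\xi e^{-u_\xi}$, to the unweighted disc, where the extremal function is a constant. The only difference is that you identify $F_\xi$ through the uniqueness of the minimizer in Lemma~\ref{Lemma4}, whereas the paper uses the norm identity of Lemma~\ref{Lemma3} together with the orthogonal expansion $\int_\DD|F_\xi e^{-u_\xi}|^2d\lambda=\sum_j\frac{\pi}{j+1}|a_j|^2$ to force $a_j=0$ for $j\geq1$.
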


\begin{proof}
Since $\varphi_\xi$ is harmonic on $\DD$, there exists a holomorphic function $u_\xi\in\calO(\DD)$ such that $\varphi_\xi=2\operatorname{Re}u_\xi$. We consider the Taylor expansion of $F_\xi e^{-u_\xi}\in\calO(\DD)$:
\begin{equation*}
	F_\xi(w)e^{-u_\xi(w)} = \sum_{j=0}^{+\infty} a_j w^j,
\end{equation*}
which is uniformly convergent on any compact subset of $\DD$. Note that
\begin{equation*}
	|a_0|^2 = |F_\xi(0)e^{-u_\xi(0)}|^2 = e^{-\varphi_\xi(0)} = e^{-\varphi(0)}.
\end{equation*}

Since $\{w^j\}_{j=0}^\infty$ is an orthogonal family on $\DD$, it is clear that
\begin{align*}
\int_\DD |F_\xi|^2 e^{-\varphi_\xi} d\lambda & = \int_\DD |F_\xi(w)e^{-u_\xi(w)}|^2 d\lambda \\
& = \int_\DD \left(\sum_{j=0}^{+\infty} a_jw^j\right) \left(\sum_{k=0}^{+\infty} \overline{a_k w^k}\right) d\lambda \\
& = \sum_{j=0}^{+\infty} |a_j|^2 \int_\DD |w|^{2j} d\lambda = \sum_{j=0}^{+\infty} \frac{\pi}{j+1}|a_j|^2.
\end{align*}
On the other hand,
\begin{equation*}
	\int_\DD |F_\xi|^2 e^{-\varphi_\xi} d\lambda = \pi e^{-\varphi(0)} = \pi|a_0|^2.
\end{equation*}
It follows that $a_j=0$ for all $j\geq1$. In other words,
\begin{equation*}
	F_\xi(w)e^{-u_\xi(w)} \equiv a_0 \quad\text{on}\quad \DD.
\end{equation*}

As a consequence,
\begin{equation*}
	|F_0(w\xi)|^2e^{-\varphi(w\xi)} = |F_\xi(w)e^{-u_\xi(w)}|^2 = e^{-\varphi(0)} \quad\text{on}\quad \DD.
\end{equation*}
Since $\xi\in\Sp^{2n-1}$ is arbitrary, we see that
\begin{equation*}
|F_0|^2e^{-\varphi}\equiv e^{-\varphi(0)} \quad\text{on}\quad \BB^n.
\end{equation*}
This completes the proof.
\end{proof}

As $\varphi>-\infty$ everywhere, we know $F_0\neq0$ everywhere, and hence $\varphi$ is pluriharmonic on $\BB^n$. This completes the proof of Theorem \ref{Thm:CharPH}.

\section{Proof of Theorem \ref{Thm:Forelli-type}} \label{Sec4}

We begin with the special case where $\varphi$ is plurisubharmonic on the whole $\BB^n$. In this case, it suffices to assume only that \textit{almost every} slice of $\varphi$ is harmonic.

\begin{theorem} \label{Thm:Forelli-Sim}
	Let $\varphi>-\infty$ be a plurisubharmonic function on $\BB^n$. Assume there exists a set $E\subset\Sp^{2n-1}$ of zero measure such that the slice $w\mapsto\varphi(w\xi)$ is harmonic on $\DD$ for every $\xi\in\Sp^{2n-1}\setminus E$. Then $\varphi$ is pluriharmonic on $\BB^n$.
\end{theorem}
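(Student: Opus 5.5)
The plan is to reduce the statement to Theorem \ref{Thm:CharPH}. I will show that almost-everywhere slicewise harmonicity forces the reverse inequality
\begin{equation*}
K_{\BB^n}(0;e^{-\varphi}) \leq \frac{n!}{\pi^n}e^{\varphi(0)}.
\end{equation*}
Together with the lower bound \eqref{Eq:KerLB}, which comes from Corollary \ref{Cor:OptExt1}, this gives equality. Theorem \ref{Thm:CharPH} then yields that $\varphi$ is pluriharmonic on $\BB^n$.

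To get the upper bound, fix $f\in\calO(\BB^n)$ with $\int_{\BB^n}|f|^2e^{-\varphi}d\lambda\leq 1$. Take any $\xi\in\Sp^{2n-1}\setminus E$. Since $\varphi_\xi(w)=\varphi(w\xi)$ is harmonic on the simply connected disc $\DD$, there is $u_\xi\in\calO(\DD)$ with $\varphi_\xi=2\Re u_\xi$. Hence
\begin{equation*}
|f(w\xi)|^2e^{-\varphi(w\xi)}=|f(w\xi)e^{-u_\xi(w)}|^2 .
\end{equation*}
This is the squared modulus of a holomorphic function of $w$, so it is subharmonic. By the sub-mean value property,
\begin{equation*}
\frac{1}{2\pi}\int_0^{2\pi}|f(se^{i\theta}\xi)|^2e^{-\varphi(se^{i\theta}\xi)}d\theta \geq |f(0)|^2e^{-\varphi(0)}, \quad 0<s<1.
\end{equation*}

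Next I integrate in real polar coordinates, $d\lambda = s^{2n-1}ds\,d\sigma(\xi)$, where $\sigma$ is surface measure on $\Sp^{2n-1}$. The measure $\sigma$ is invariant under the circle action $\xi\mapsto e^{i\theta}\xi$. Therefore, by Fubini (the integrand is nonnegative and measurable because $\varphi$ is upper semi-continuous), one can replace the integrand on each sphere $\{|z|=s\}$ by its average over the circles $\{se^{i\theta}\xi\}$. The exceptional set $E$ has measure zero and does not affect the integral, which gives
\begin{equation*}
1\geq\int_{\BB^n}|f|^2e^{-\varphi}d\lambda
=\int_{\Sp^{2n-1}}\int_0^1\Big(\frac{1}{2\pi}\int_0^{2\pi}|f(se^{i\theta}\xi)|^2e^{-\varphi(se^{i\theta}\xi)}d\theta\Big)s^{2n-1}ds\,d\sigma(\xi)
\geq |f(0)|^2e^{-\varphi(0)}\,\frac{\pi^n}{n!}.
\end{equation*}
Taking the supremum over all admissible $f$ gives the desired upper bound on $K_{\BB^n}(0;e^{-\varphi})$.

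The only delicate point is justifying the circle-averaging step. One needs the $U(1)$-invariance of $\sigma$ together with Tonelli, and one must check that the exceptional set $E$ is harmless. Since $E$ is $\sigma$-null, its $U(1)$-saturation $\bigcup_\theta e^{i\theta}E$ is also null, or at worst one averages over $\theta$ after integrating, so this is routine. Everything else is a direct application of Theorem \ref{Thm:CharPH}.
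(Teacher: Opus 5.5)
Your proposal is correct and follows essentially the same route as the paper: the sub-mean-value inequality for $|f|^2e^{-\varphi}$ on each good slice, circle-averaging in polar coordinates to get $K_{\BB^n}(0;e^{-\varphi})\leq \frac{n!}{\pi^n}e^{\varphi(0)}$, then equality with \eqref{Eq:KerLB} and an appeal to Theorem \ref{Thm:CharPH}. One small correction: the $U(1)$-saturation of an arbitrary null set need not be null, but you never use it. Your displayed inequality only needs the circle average to be bounded below for $\xi\notin E$ and $E$ to be null (averaging over $\theta$ after integrating, exactly as the paper does). Moreover, the set of directions with non-harmonic slice is automatically circle-invariant anyway.
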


\begin{proof}
Fix a function $F\in A^2(\BB^n;e^{-\varphi})$ with $F(0)=1$, such a function exists by Corollary \ref{Cor:OptExt1}. We choose a vector $\xi\in\Sp^{2n-1}$ such that $w\mapsto\varphi(w\xi)$ is harmonic on $\DD$. Since $\log|F(w\xi)|$ and $-\varphi(w\xi)$ are subharmonic functions, and $\exp$ is a convex increasing function on $\RR$, it follows that
\begin{equation*}
	|F(w\xi)|^2e^{-\varphi(w\xi)} = \exp(2\log|F(w\xi)|-\varphi(w\xi))
\end{equation*}
is also a subharmonic function on $\DD$. Then the mean-value inequality gives
\begin{equation*}
	\frac{1}{2\pi} \int_0^{2\pi} |F(re^{i\theta}\xi)|^2e^{-\varphi(re^{i\theta}\xi)} d\theta \geq e^{-\varphi(0)}, \quad 0<r<1.
\end{equation*}
By assumption, the above inequality holds for every $\xi\in\Sp^{2n-1}\setminus E$.

For any non-negative integrable function $Q$ on $\BB^n$, we have
\begin{align*}
\int_{\BB^n} Q(z) d\lambda_z & = \int_0^1 \left( \int_{\Sp^{2n-1}} Q(r\xi) dS_\xi \right) r^{2n-1} dr \\
& = \int_0^1 \left( \frac{1}{2\pi} \int_0^{2\pi} \left( \int_{\Sp^{2n-1}} Q(re^{i\theta}\xi) dS_\xi \right) d\theta \right) r^{2n-1} dr \\
& = \int_0^1 \left( \int_{\Sp^{2n-1}} \left( \frac{1}{2\pi} \int_0^{2\pi} Q(re^{i\theta}\xi) d\theta \right) dS_\xi \right) r^{2n-1} dr.
\end{align*}
Since $E$ has zero measure, we apply the above formula with $Q = |F|^2e^{-\varphi}$ and obtain
\begin{align*}
\int_{\BB^n} |F|^2e^{-\varphi} d\lambda & = \int_0^1 \left( \int_{\Sp^{2n-1}} \left( \frac{1}{2\pi} \int_0^{2\pi} |F(re^{i\theta}\xi)|^2e^{-\varphi(re^{i\theta}\xi)} d\theta \right) dS_\xi \right) r^{2n-1} dr \\
& \geq \int_0^1 \left( \int_{\Sp^{2n-1}} e^{-\varphi(0)} dS_\xi \right) r^{2n-1} dr = \frac{\pi^n}{n!} e^{-\varphi(0)}.
\end{align*}

As a consequence,
\begin{equation*}
K_{\BB^n}(0;e^{-\varphi}) = \sup_F \frac{|F(0)|^2}{\int_{\BB^n} |F|^2e^{-\varphi} d\lambda} \leq \frac{n!}{\pi^n} e^{\varphi(0)}.
\end{equation*}
Since $\varphi>-\infty$ is plurisubharmonic on $\BB^n$, inequality \eqref{Eq:KerLB} also holds. Therefore, we obtain the equality
\begin{equation*}
K_{\BB^n}(0;e^{-\varphi}) = \frac{n!}{\pi^n} e^{\varphi(0)}.
\end{equation*}
According to Theorem \ref{Thm:CharPH}, $\varphi$ is pluriharmonic on $\BB^n$. This completes the proof.
\end{proof}

Next, we prove the following result which contains Theorem \ref{Thm:Forelli-type} as a corollary.

\begin{theorem} \label{Thm:Forelli-full}
Let $\varphi$ be a real-valued function on $\BB^n$. Assume there exists a set $E\subset\Sp^{2n-1}$ of zero measure such that $w\mapsto\varphi(w\xi)$ is harmonic on $\DD$ for every $\xi\in\Sp^{2n-1}\setminus E$. If $\varphi$ is plurisubharmonic in a neighborhood of $0$, then there exists a pluriharmonic function $\widetilde{\varphi}$ on $\BB^n$ such that $\widetilde{\varphi}=\varphi$ on $\xi\DD$ for every $\xi\in\Sp^{2n-1}\setminus E$.
\end{theorem}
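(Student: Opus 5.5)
Since $\varphi$ is plurisubharmonic on some ball $\BB^n(0,\delta)$, it is in particular upper semi-continuous there and real-valued, so $\varphi>-\infty$ on that ball. The slicewise harmonicity on $\xi\DD$ restricts to the small discs $\xi\DD(0,\delta)$. After rescaling $z\mapsto z/\delta$ (which preserves slices and plurisubharmonicity), Theorem \ref{Thm:Forelli-Sim} applies to $\varphi(\delta\,\cdot)$ on $\BB^n$. We conclude that $\varphi$ is pluriharmonic on $\BB^n(0,\delta)$; in particular it is real-analytic there. Write $\varphi=2\Re u$ on $\BB^n(0,\delta)$ with $u$ holomorphic, and expand $u$ in homogeneous polynomials: $u=\sum_{k\ge0}P_k$.

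Next I extend. For each $\xi\in\Sp^{2n-1}\setminus E$, the function $\varphi_\xi(w)=\varphi(w\xi)$ is harmonic on $\DD$ and coincides on $\DD(0,\delta)$ with $2\Re\sum_k P_k(\xi)w^k$. By the identity theorem for harmonic functions, which are real-analytic, we get $\varphi_\xi=2\Re h_\xi$ on all of $\DD$. Here $h_\xi(w)=\sum_k P_k(\xi)w^k$, and the series converges on $\DD$ because it is the Taylor series at $0$ of the holomorphic function $u_\xi$ with $2\Re u_\xi=\varphi_\xi$ on $\DD$ and $\Im u_\xi(0)=\Im u(0)$. So for a.e.\ $\xi$ the one-variable series $\sum_k P_k(\xi)w^k$ has radius of convergence at least $1$.

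The main obstacle is to show that $\sum_k P_k(z)$ converges locally uniformly on the whole of $\BB^n$, so that $\widetilde\varphi:=2\Re\sum_kP_k$ is pluriharmonic on $\BB^n$ and agrees with $\varphi$ on each $\xi\DD$ with $\xi\notin E$. I would use the classical Forelli/Hartogs-type argument. For $0<r<1$, let $A_m=\{\xi\in\Sp^{2n-1}: |P_k(\xi)|\le m\,r^{-k}\ \forall k\}$. These are closed sets, and their union contains $\Sp^{2n-1}\setminus E$. Hence $\bigcup_m A_m$ has full measure, so some $A_m$ has positive measure. Then I apply a polynomial inequality for sets of positive measure on the sphere: since $P_k$ is homogeneous of degree $k$, the functions $\tfrac1k\log|P_k|$ extended to $\CC^n$ are plurisubharmonic of logarithmic growth. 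A Bernstein--Walsh/Siciak estimate relative to a non-pluripolar compact set (a positive-measure subset of the sphere, enlarged by circle invariance $e^{i\theta}A_m$, is non-pluripolar) gives $\sup_{\Sp^{2n-1}}|P_k|\le m\,r^{-k}C^k$. The constant $C$ depends on the set, which is bad. To fix this, I would instead use the Hartogs lemma: the functions $v_k=\tfrac1k\log|P_k(z)|$ are psh on $\CC^n$, locally uniformly bounded above (by the bound from $\BB^n(0,\delta)$), and $\limsup_k v_k(\xi)\le0$ for a.e.\ $\xi$ on the sphere, hence on a dense set of $\Sp^{2n-1}$. Take $v=(\limsup v_k)^*$. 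It is psh and log-homogeneous, $v(\lambda z)=\log|\lambda|+v(z)$, and $v\le\limsup v_k\le 0$ on $\Sp^{2n-1}\setminus(E\cup N)$, where $N$ is pluripolar, hence of measure zero. A psh function that is $\le0$ a.e.\ on the sphere satisfies $v\le0$ there by sub-mean-value/upper semicontinuity (use $v=\lim$ of averages over small balls intersected with homogeneity). Thus $v\le0$ on $\overline{\BB^n}$. Hartogs' lemma then gives $v_k\le\varepsilon$ on compact sets $|z|\le r$ for $k$ large, i.e.\ $|P_k(z)|\le (e^{\varepsilon}r)^k$, which yields normal convergence of $\sum P_k$ on $\BB^n(0,r)$ for every $r<1$. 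This gives the required $\widetilde\varphi$, and Theorem \ref{Thm:Forelli-type} follows, since there $E=\emptyset$.
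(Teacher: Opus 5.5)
Your proposal is correct and follows the paper's proof. You rescale and apply Theorem \ref{Thm:Forelli-Sim} on a small ball, expand $u$ in homogeneous polynomials, and identify each slice $u_\xi$ with $\sum_k P_k(\xi)w^k$ on $\DD$. You then use Hartogs' lemma with homogeneity to get normal convergence of $\sum_k P_k$ on every $\BB^n(0,r)$, $r<1$.

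The only difference is technical. You pass through $(\limsup_k v_k)^*$, the pluripolarity of the exceptional set, and a homogeneity/sub-mean-value argument. The paper instead notes that homogeneity turns the a.e.\ bound on $\Sp^{2n-1}$ into $\limsup_k|f_k(z)|^{1/k}\le 1$ for a.e.\ $z\in\BB^n$, and then applies the a.e.\ variant of Hartogs' lemma (Lemma \ref{Lemma:Hartogs}) directly. That makes your detour, and the abandoned Bernstein--Walsh attempt, unnecessary.
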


The proof will be a combination of Theorem \ref{Thm:Forelli-Sim} and Forelli's \cite{Forelli} ideas. For this purpose, we need the following variant of Hartogs' lemma (for example, see \cite[Theorem 1.6.13]{Hormander}).

\begin{lemma}\label{Lemma:Hartogs}
Let $v_k$ be a sequence of plurisubharmonic functions on $\Omega\subset\CC^n$ which are uniformly bounded from above on every compact subset of $\Omega$, and assume that 
\begin{equation*}
\limsup_{k \to +\infty} v_k(z) \leq C
\end{equation*}
for \emph{almost every} $z \in \Omega$. For every $\varepsilon > 0$ and every compact set $K \subset \Omega$, one can then find an integer $k_0$ such that
\begin{equation*}
v_k(z) \leq C + \varepsilon, \quad \forall z \in K, \, \forall k > k_0.
\end{equation*}
\end{lemma}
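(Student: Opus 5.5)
The plan is the standard sub-mean-value argument. I would combine Fatou's lemma with a compactness covering, and use a slightly shrunken ball so that the estimate is uniform near each point.

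\textbf{Setup.} Fix $\varepsilon>0$ and a compact $K\subset\Omega$. Choose $r>0$ so that the closed $r$-neighborhood $K_r=\{z:\operatorname{dist}(z,K)\leq r\}$ lies in $\Omega$. By hypothesis there is a constant $M$ with $v_k\leq M$ on $K_r$ for all $k$. Enlarging $M$ if needed, we may assume $M\geq C$.

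\textbf{Uniform mean-value bound near a point.} For $z\in K$, plurisubharmonicity gives the sub-mean-value inequality over balls,
\begin{equation*}
v_k(z)-M\leq \frac{1}{|\BB^n(z,r)|}\int_{\BB^n(z,r)}(v_k-M)\,d\lambda .
\end{equation*}
Now fix $w\in K$ and $0<\delta<r$. If $|z-w|<\delta$, then $\BB^n(w,r-\delta)\subset \BB^n(z,r)$. Since the integrand $v_k-M$ is $\leq0$ on $K_r$, shrinking the domain only increases the integral, so
\begin{equation*}
v_k(z)-M\leq \frac{1}{|\BB^n(0,r)|}\int_{\BB^n(w,r-\delta)}(v_k-M)\,d\lambda ,\qquad z\in \BB^n(w,\delta).
\end{equation*}
The right-hand side no longer depends on $z$. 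This is the key trick, and the one step requiring care: the pointwise bound has to be uniform on a whole neighborhood of $w$ before Fatou's lemma is applied.

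\textbf{Fatou's lemma.} The functions $M-v_k$ are non-negative on $\BB^n(w,r-\delta)$, and $\limsup_k v_k\leq C$ almost everywhere. Fatou's lemma applied to $M-v_k$ therefore gives
\begin{equation*}
\limsup_{k\to\infty}\int_{\BB^n(w,r-\delta)}(v_k-M)\,d\lambda\leq (C-M)\,|\BB^n(w,r-\delta)| .
\end{equation*}
Note that $C-M\leq0$.

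\textbf{Choice of $\delta$.} Combining the two displays, $\limsup_k$ of the uniform bound on $\BB^n(w,\delta)$ is at most
\begin{equation*}
M+(C-M)\Bigl(\tfrac{r-\delta}{r}\Bigr)^{2n}.
\end{equation*}
Choose $\delta$ small, independently of $w$, so that this quantity is $<C+\varepsilon$. Then there is $k_w$ such that $v_k\leq C+\varepsilon$ on $\BB^n(w,\delta)$ for all $k>k_w$. Functions that are identically $-\infty$ cause no difficulty, since all the inequalities remain valid.

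\textbf{Compactness.} Cover $K$ by finitely many balls $\BB^n(w_j,\delta)$ and take $k_0=\max_j k_{w_j}$. This yields $v_k\leq C+\varepsilon$ on $K$ for all $k>k_0$.
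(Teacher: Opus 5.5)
Your argument is correct. It is essentially the standard proof of the Hartogs lemma: the sub-mean-value inequality over balls, then Fatou's lemma applied to $M-v_k$, then a compactness covering. The paper gives no proof of its own and only cites H\"ormander for this result. As your proof shows, the limsup hypothesis enters only through Fatou's lemma, which is why the almost-everywhere version holds, and plurisubharmonicity is used only through subharmonicity in $\RR^{2n}$.

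There is one small slip to fix. Your first sub-mean-value bound is justified only for $z\in K$, since that is what guarantees $\BB^n(z,r)\subset K_r$, where $v_k\leq M$. So the $z$-independent estimate should be stated on $K\cap\BB^n(w,\delta)$, not on all of $\BB^n(w,\delta)$; this is all the covering step needs. Alternatively, take $K_{2r}\subset\Omega$ and bound $v_k$ on $K_{2r}$ instead of $K_r$.
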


\begin{proof}[Proof of Theorem \ref{Thm:Forelli-full}]
Assume that $\varphi>-\infty$ is plurisubharmonic on $\BB^n(0,\rho)$, where $0<\rho<1$. By a scaling argument, Theorem \ref{Thm:Forelli-Sim} yields that $\varphi$ is pluriharmonic on $\BB^n(0,\rho)$. Then there exists a holomorphic function $f\in\calO(\BB^n(0,\rho))$ such that $\varphi = 2\Re f$.

We consider the Taylor expansion of $f$,
\begin{equation*}
	f(z) = \sum_\alpha c_\alpha z^\alpha,
\end{equation*}
which is compactly convergent on $\BB^n(0,\rho)$. For each integer $k\geq0$, we define
\begin{equation*}
	f_k(z) = \sum_{|\alpha|=k} c_\alpha z^\alpha,
\end{equation*}
which is a homogeneous polynomial of degree $k$.

Fix a vector $\xi\in\Sp^{2n-1}\setminus E$ and set $\varphi_\xi(\tau) = \varphi(\tau\xi)$. Since $\varphi_\xi$ is harmonic on $\DD$, there exists a holomorphic function $g_\xi\in\calO(\DD)$ such that $\varphi_\xi = 2\Re g_\xi$. After adding a purely imaginary constant, we have $g_\xi(\tau) = f(\tau\xi)$ on $\DD(0,\rho)$. Then
\begin{equation*}
	g_\xi(\tau) = \sum_\alpha c_\alpha (\tau\xi)^\alpha = \sum_k f_k(\tau\xi) = \sum_k f_k(\xi)\tau^k
\end{equation*}
with compact convergence on $\DD$. Therefore,
\begin{equation} \label{Eq:SliceEqual}
	\varphi_\xi(\tau) = \sum_{k=0}^{+\infty} f_k(\tau\xi) + \sum_{k=0}^{+\infty} \overline{f_k(\tau\xi)}
\end{equation}
with compact convergence on $\DD$. Following Forelli's \cite{Forelli} ideas, it remains to show that the series $\sum f_k$ is compactly convergent on $\BB^n$.

Since $f$ is holomorphic on $\BB^n(0,\rho)$, we have
\begin{equation*}
	f_k(z) = \frac{1}{2\pi} \int_0^{2\pi} f(e^{i\theta}z) e^{-ik\theta} d\theta, \quad z\in\BB^n(0,\rho).
\end{equation*}
As $f$ is bounded on $\BB^n(0,\rho/2)$, it follows that there is a constant $\beta > 1$ such that
\begin{equation*}
	|f_k(z)| \leq \beta, \quad z\in\BB^n(0,\rho/2).
\end{equation*}
Since $f_k$ is homogeneous, for $k\geq1$ and $z \in \BB^n$, we have
\begin{equation} \label{Eq:Hartogs-Cond1}
	|f_k(z)|^{1/k} = \frac{2}{\rho} \left|f_k\left(\frac{\rho}{2}z\right)\right|^{1/k} \leq \frac{2}{\rho}\beta^{1/k} \leq \frac{2}{\rho}\beta =: \alpha.
\end{equation}

For $\xi \in \Sp^{2n-1}\setminus E$, the power series $\sum f_k(\xi)\tau^k$ converges to $g_\xi$ on $\DD$. Hence,
\begin{equation*}
	\limsup_{k \to +\infty} |f_k(\xi)|^{1/k} \leq 1.
\end{equation*}
By homogeneity, the same inequality also holds for $\tau\xi$, where $\tau\in\DD$. Since $E$ has zero measure, it follows that
\begin{equation} \label{Eq:Hartogs-Cond2}
	\limsup_{k \to +\infty} |f_k(z)|^{1/k} \leq 1, \quad \text{a.e. } z\in\BB^n.
\end{equation}

In the following, we fix constants $0<r_1<r_2<r_3<1$.  Since $f_k$ are holomorphic, $|f_k|^{1/k}$ are plurisubharmonic. By \eqref{Eq:Hartogs-Cond1}, \eqref{Eq:Hartogs-Cond2} and Hartogs' lemma (Lemma \ref{Lemma:Hartogs}), there is an integer $k_0$ such that
\begin{equation*}
	|f_k(z)|^{1/k} \leq \frac{r_3}{r_2}, \quad \forall z\in\BB^n(0,r_3), \, \forall k > k_0.
\end{equation*}
Since $f_k$ is homogeneous,
\begin{equation*}
	|f_k(z)| \leq 1, \quad \forall z\in\BB^n(0,r_2), \, \forall k > k_0.
\end{equation*}
As a consequence, there is a constant $C>0$ such that
\begin{equation*}
	|f_k(z)| \leq C, \quad \forall z\in\BB^n(0,r_2), \, \forall k \geq 0.
\end{equation*}
Using homogeneity again, we get
\begin{equation*}
	|f_k(z)|\leq C\left(\frac{r_1}{r_2}\right)^k, \quad \forall z\in\BB^n(0,r_1), \, \forall k \geq 0.
\end{equation*}
Therefore, $\sum f_k$ converges compactly on $\BB^n(0,r_1)$.

Since $0<r_1<1$ is arbitrary, we conclude that $\sum f_k$ converges compactly to a holomorphic function $\widetilde{f}$ on $\BB^n$. Then $\widetilde{\varphi} := 2\Re \widetilde{f}$ is a pluriharmonic function on $\BB^n$. According to \eqref{Eq:SliceEqual}, we have $\widetilde{\varphi}=\varphi$ on $\xi\DD$ for every $\xi\in\Sp^{2n-1}\setminus E$. 
\end{proof}

\section{Proof of Theorem \ref{Thm:CharPH-pKer}} \label{Sec5}

To prove Theorem \ref{Thm:CharPH-pKer}, we need a variant of Theorem \ref{Thm:CharPH}.

\begin{lemma} \label{Lemma6}
Let $\varphi>-\infty$ be a subharmonic function on $\DD$. Let $c>0$ be a constant and $h$ a holomorphic function on $\DD$ with $h(0)=1$. If
\begin{equation*}
K_{\DD}(0;e^{-\varphi-c\log|h|}) = \frac{1}{\pi} e^{\varphi(0)},
\end{equation*}
then $\varphi$ is harmonic on $\DD$.
\end{lemma}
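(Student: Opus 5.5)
The plan is to reduce Lemma \ref{Lemma6} to the one-dimensional rigidity statement, Theorem \ref{Thm:Harm}, applied to the combined weight
\begin{equation*}
\psi := \varphi + c\log|h|.
\end{equation*}
This $\psi$ is subharmonic on $\DD$ because $c>0$ and $\log|h|$ is subharmonic. Since $h(0)=1$, we have $\psi(0)=\varphi(0)>-\infty$, so the hypothesis reads exactly
\begin{equation*}
K_\DD(0;e^{-\psi}) = \frac{1}{\pi}e^{\psi(0)},
\end{equation*}
which is the equality case of the sharp lower bound \eqref{Eq:KerLB} for the weight $\psi$ with $n=1$. The only difference from the setting of Theorem \ref{Thm:Harm} is that $\psi$ may equal $-\infty$ on the discrete zero set $Z$ of $h$, with $0\notin Z$. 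The hypothesis $\varphi>-\infty$ guarantees that $\psi>-\infty$ off $Z$.

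The first step is to check that the equality-case argument only uses $\psi(0)>-\infty$. I would run the argument of Lemma \ref{Lemma1} with $n=1$ and $\psi=2\log|z|$.
\begin{itemize}
\item Corollary \ref{Cor:OptExt1} only needs the weight to be finite at the origin, and gives $I(t)\leq \pi e^{-\psi(0)+t}$.
\item Guan's concavity (Theorem \ref{Thm:Concave}) holds for an arbitrary subharmonic weight.
\item The equality hypothesis then forces $r\mapsto I(\log r)$ to be linear.
\item Theorem \ref{Thm:Linear} then gives a single $F_0\in\calO(\DD)$ such that, for every $0<r\leq1$, $F_0|_{\DD(0,r)}$ is the minimizer on $\DD(0,r)$ and
\begin{equation*}
\int_{\DD(0,r)}|F_0|^2e^{-\psi}\,d\lambda=\pi r^2e^{-\psi(0)}.
\end{equation*}
\end{itemize}
With this rigidity in hand, I expect the proof of Theorem \ref{Thm:Harm} in \cite{LiuXu24} (equivalently \cite[Theorem 1.11]{GuanMi22}) to go through unchanged. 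I would state this as the generalized form of Theorem \ref{Thm:Harm}: any subharmonic weight $\psi$ with $\psi(0)>-\infty$ that attains the equality is harmonic on $\DD$. That proof uses only the linearity and the closedness of $(1,1)$-currents in dimension one, and never $\psi>-\infty$ away from $0$.

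Granting this, $\psi$ is harmonic on $\DD$ and hence finite everywhere. Therefore $h$ has no zeros, so $\log|h|$ is harmonic, and $\varphi=\psi-c\log|h|$ is harmonic on $\DD$, as claimed.

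The main obstacle is the justification that Theorem \ref{Thm:Harm} applies to a weight that is $-\infty$ at isolated points. If I cannot simply cite the Liu--Xu/Guan--Mi argument in that generality, the backup would be as follows.
\begin{itemize}
\item From the identity above on every $\DD(0,r)$, extract as in Lemma \ref{Lemma3} that the circle means satisfy $\frac{1}{2\pi}\int_0^{2\pi}|F_0(se^{i\theta})|^2e^{-\psi(se^{i\theta})}d\theta=e^{-\psi(0)}$ for a.e.\ $s$.
\item Write the Riesz decomposition $\psi=u+p_\mu$, where $u$ is harmonic and $p_\mu$ is the logarithmic potential of $\mu=\Delta\psi\geq0$.
\item Aim to show $\mu=0$. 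Comparing with a harmonic majorant of $\psi$ on $\DD(0,r)$ should make $|F_0|^2e^{-\psi}$ strictly exceed $e^{-\psi(0)}$ on average whenever $\mu\neq0$.
\end{itemize}
I would try the first route first, since it only requires verifying that the finiteness assumption on $\varphi$ in Theorem \ref{Thm:Harm} is used solely at the origin.
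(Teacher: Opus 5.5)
Your first step is sound. For the subharmonic weight $\psi=\varphi+c\log|h|$, which is finite at $0$, Corollary \ref{Cor:OptExt1}, Theorem \ref{Thm:Concave} and Theorem \ref{Thm:Linear} do give linearity of $r\mapsto I(\log r)$ and $\int_{\DD(0,r)}|F_0|^2e^{-\psi}d\lambda=\pi r^2e^{-\psi(0)}$.

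The gap is the next step. The ``generalized form of Theorem \ref{Thm:Harm}'' you want to invoke is false, so no proof of Theorem \ref{Thm:Harm} can avoid using $\psi>-\infty$ away from $0$. Take $\psi(z)=2\log|1-2z|$, so $\psi(0)=0$. Since $|1-2z|^{-2}$ is not integrable near $z=1/2$, every $f\in A^2(\DD;e^{-\psi})$ vanishes at $1/2$. Hence $f=(1-2z)g$ with $g\in\calO(\DD)$, $\int_\DD|f|^2e^{-\psi}d\lambda=\int_\DD|g|^2d\lambda$ and $f(0)=g(0)$. Therefore $K_\DD(0;e^{-\psi})=K_\DD(0)=\frac1\pi=\frac1\pi e^{\psi(0)}$, although $\psi$ is not harmonic.

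This example sits inside the lemma itself. The data $\varphi\equiv0$, $c=1$, $h(z)=(1-2z)^2$ satisfy every hypothesis, and $c=1$ lies in the range $0<c<2$ used in Section \ref{Sec5}. Yet $\varphi+c\log|h|$ is not harmonic and $h$ has a zero, so both of your intermediate conclusions (``$\psi$ is harmonic'' and ``$h$ has no zeros'') fail.

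Your backup plan fails on the same example, because it aims at $\Delta\psi=0$. The point masses of $c\,\Delta\log|h|$ at zeros of $h$ can survive the equality; only $\Delta\varphi$ can be forced to vanish. Note also that your argument never uses $\varphi>-\infty$ away from the origin. The choice $h\equiv1$, $\varphi=2\log|1-2z|$ shows that this hypothesis is indispensable.

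The missing idea is to attack the Riesz measure of $\varphi$ alone, leave the singular part $c\log|h|$ untouched, and use that $\varphi$ is finite everywhere. The paper argues by contradiction. If $\varphi$ is not harmonic, take a harmonic replacement on a disc: a subharmonic $\widetilde\varphi\geq\varphi$ with $\widetilde\varphi=\varphi$ off $\DD(x,r)$, where $\overline{\DD(x,r)}\subset\DD$, and $\widetilde\varphi\not\equiv\varphi$ on $\DD(x,r)$. Then rerun the Xu--Zhou contradiction argument behind Theorem \ref{Thm:Harm} for the pair of weights $\varphi+c\log|h|$ and $\widetilde\varphi+c\log|h|$.

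Because $\varphi$ and $\widetilde\varphi\geq\varphi$ are finite everywhere, they have zero Lelong numbers. So this replacement does not change which holomorphic germs are locally square-integrable against the combined weight. Replacing $\psi$ itself near a zero of $h$ would change this, and that is exactly the mechanism in the counterexample above. Alternatively, as the paper notes, the lemma follows from Guan--Mi's Theorem 1.11.
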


\begin{proof}
This lemma is a corollary to Theorem 1.11 of Guan-Mi \cite{GuanMi22}. Indeed, their result is much stronger, but its proof is rather lengthy. For convenience, we give a short proof of what we claimed.

Actually, we only need a slight modification of the proof of Theorem \ref{Thm:Harm} (see pages 20-22 of \cite{XuZhou24} for its proof). Suppose, to the contrary, that $\varphi>-\infty$ is \textit{not} harmonic. Then there exists a subharmonic function $\widetilde{\varphi} \geq \varphi$ on $\DD$ such that $\widetilde{\varphi}\equiv\varphi$ on $\DD\setminus\DD(x;r)$ and $\widetilde{\varphi}\not\equiv\varphi$ on $\DD(x;r)$, where $\overline{\DD(x,r)} \subset \DD$. Applying exactly the same arguments to $\varphi+c\log|h|$ and $\widetilde{\varphi}+c\log|h|$ then yields a contradiction. Therefore, $\varphi$ must be harmonic.
\end{proof}

\begin{lemma} \label{Lemma7}
Let $\varphi>-\infty$ be a plurisubharmonic function on $\BB^n$, where $n\geq2$. Let $c>0$ be a constant and $h$ a holomorphic function on $\BB^n$ with $h(0)=1$. If
\begin{equation*}
K_{\BB^n}(0;e^{-\varphi-c\log|h|}) = \frac{n!}{\pi^n} e^{\varphi(0)},
\end{equation*}
then $\varphi$ is pluriharmonic on $\BB^n$.
\end{lemma}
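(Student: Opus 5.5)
The plan is to run the proof of Theorem \ref{Thm:CharPH} with the weight $\phi:=\varphi+c\log|h|$ in place of $\varphi$, and to apply Lemma \ref{Lemma6} instead of Theorem \ref{Thm:Harm} at the slice step. Since $h(0)=1$, we have $\phi(0)=\varphi(0)>-\infty$. Since $c>0$ and $\log|h|$ is plurisubharmonic, $\phi$ is plurisubharmonic on $\BB^n$, possibly equal to $-\infty$ on $\{h=0\}$. None of the earlier inputs needs $\phi>-\infty$ away from the origin. Corollaries \ref{Cor:OptExt1}, \ref{Cor:OptExt2} and \ref{Cor:OptExt3}, Theorem \ref{Thm:Concave} and Theorem \ref{Thm:Linear} only require plurisubharmonicity and finiteness at $0$; Corollary \ref{Cor:OptExt3} requires only $\phi_\xi(0)>-\infty$.

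The first step repeats Lemma \ref{Lemma1} with $\psi=2n\log|z|$ and the weight $e^{-\phi}$. The hypothesis gives $I(0)=\frac{\pi^n}{n!}e^{-\phi(0)}$. Concavity together with the upper bound from Corollary \ref{Cor:OptExt1} forces $I(\log r)$ to be linear in $r$. Then Theorem \ref{Thm:Linear} gives $F_t\equiv F_0|_{\Omega_t}$ and $I(t)=\frac{\pi^n}{n!}e^{-\phi(0)+t}$.

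The second step repeats Lemmas \ref{Lemma2}, \ref{Lemma3} and \ref{Lemma4} verbatim for the slices $\phi_\xi(w)=\varphi(w\xi)+c\log|h(w\xi)|$. This uses the squeeze argument with Corollaries \ref{Cor:OptExt3} and \ref{Cor:OptExt2}, the real-analysis Corollary \ref{Cor:RA}, and the orthogonality Lemma \ref{Lemma:Ortho}. The integrability conditions that were checked there carry over unchanged, because each one is deduced from the finiteness of the integrals of $|F_\xi|^2e^{-\phi_\xi}$ that come out of the equalities. The outcome is
\begin{equation*}
K_\DD(0;e^{-\varphi_\xi-c\log|h_\xi|})=\frac1\pi e^{\varphi(0)}
\end{equation*}
for every $\xi\in\Sp^{2n-1}$, where $h_\xi(w)=h(w\xi)$ satisfies $h_\xi(0)=1$. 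The slice $\varphi_\xi>-\infty$ is subharmonic. Lemma \ref{Lemma6} then shows that $\varphi_\xi$ is harmonic on $\DD$ for every $\xi$.

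The third step upgrades slicewise harmonicity to pluriharmonicity. Since $\varphi>-\infty$ is plurisubharmonic on all of $\BB^n$ and every slice through $0$ is harmonic, Theorem \ref{Thm:Forelli-Sim} (with $E=\emptyset$) yields directly that $\varphi$ is pluriharmonic. Alternatively, one can mimic the last lemma of Section \ref{Sec3}: write $\varphi_\xi=2\Re u_\xi$ and expand $F_\xi e^{-u_\xi}$. The main obstacle I expect is bookkeeping in the second step. One must make sure that $\phi$ taking the value $-\infty$ on $\{h=0\}$ breaks nothing, in particular the existence of minimizers and the upper semicontinuity used in Lemma \ref{Lemma:Ortho}. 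I expect this to be harmless, because $e^{-\phi}\geq e^{-\sup\phi}$ is locally bounded below on compacts, which is all that the Montel argument needs.
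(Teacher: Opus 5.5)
Your proposal is correct and follows the paper's proof essentially verbatim: you rerun Lemmas \ref{Lemma1}--\ref{Lemma4} with the weight $\varphi+c\log|h|$, which only needs plurisubharmonicity and finiteness at $0$, then apply Lemma \ref{Lemma6} on each slice, and finish with Theorem \ref{Thm:Forelli-Sim} with $E=\emptyset$. One caveat is that your parenthetical alternative for the last step does not work as stated: the slice weight is now $|e^{-u_\xi}|^2|h_\xi|^{-c}$, and the non-radial factor $|h_\xi|^{-c}$ destroys the orthogonality of the monomials $w^j$, so Theorem \ref{Thm:Forelli-Sim} is the right way to conclude.
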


\begin{proof}
The proof is a slight modification of the proof of Theorem \ref{Thm:CharPH}. As before, let $\Omega_t = \{\psi<t\}$ be the sublevel set of $\psi$ and let
\begin{equation*}
	I(t) = \inf\left\{ \int_{\Omega_t} |F|^2e^{-\varphi-c\log|h|}d\lambda: F\in\calO(\Omega_t),F(0)=1 \right\}
\end{equation*}
be the minimal $L^2$ integral. We denote by $F_t\in\calO(\Omega_t)$ the unique minimizer of the above infimum. For any $\xi\in\Sp^{2n-1}$, we define
\begin{equation*}
F_\xi(w)=F_0(w\xi), \quad \varphi_\xi(w)=\varphi(w\xi), \quad h_\xi(w)=h(w\xi).
\end{equation*}
It is clear that, with $\varphi$ replaced by $\varphi+c\log|h|$ and $\varphi_\xi$ replaced by $\varphi_\xi+c\log|h_\xi|$, Lemma \ref{Lemma1}, \ref{Lemma2}, \ref{Lemma3}, \ref{Lemma4} remain true. Combining the conclusions of Lemma \ref{Lemma3} and \ref{Lemma4}, we see that
\begin{align*}
\inf\left\{ \int_\DD |f|^2 e^{-\varphi_\xi-c\log|h_\xi|} d\lambda : f\in\calO(\DD), f(0)=1 \right\} \\ = \int_\DD |F_\xi|^2 e^{-\varphi_\xi-c\log|h_\xi|} d\lambda = \pi e^{-\varphi(0)}.
\end{align*}
Therefore,
\begin{equation*}
	K_\DD(0;e^{-\varphi_\xi-c\log|h_\xi|}) = \frac{1}{\pi} e^{\varphi_\xi(0)}.
\end{equation*}
According to Lemma \ref{Lemma6}, $\varphi_\xi(w) = \varphi(w\xi)$ is a harmonic function on $\DD$. Now, it follows from Theorem \ref{Thm:Forelli-Sim} that $\varphi$ is pluriharmonic on $\BB^n$.
\end{proof}

Now, we are ready to prove Theorem \ref{Thm:CharPH-pKer} in the case of $z=0$.

\begin{proof}[Proof of Theorem \ref{Thm:CharPH-pKer} (in the case of $z=0$)]
Firstly, we prove the sufficiency part.

Since $\varphi$ is pluriharmonic, there exists $u\in\calO(\BB^n)$ such that $\varphi=p\Re u$, and hence $|e^u|^p=e^{\varphi}$. Clearly, $f:=e^{u-u(0)}$ is a holomorphic function on $\BB^n$ such that $f(0)=1$ and
\begin{equation*}
	\int_{\BB^n}|f|^pe^{-\varphi}d\lambda = \int_{\BB^n}|e^{-u(0)}|^pd\lambda = \frac{\pi^n}{n!} e^{-\varphi(0)}.
\end{equation*}
On the other hand, since $-\varphi$ is plurisubharmonic, for any $g\in\calO(\BB^n)$, we know
\begin{equation*}
	|g|^pe^{-\varphi} = \exp(-\varphi+p\log|g|)
\end{equation*}
is also a plurisubharmonic function. Then the mean value inequality says that
\begin{equation*}
	\int_{\BB^n}|g|^pe^{-\varphi}d\lambda \geq \frac{\pi^n}{n!} |g(0)|^pe^{-\varphi(0)}.
\end{equation*}
As a consequence,
\begin{equation*}
	K_{\BB^n,p}(0;e^{-\varphi}) = \sup_g \frac{|g(0)|^p}{\int_{\BB^n}|g|^pe^{-\varphi}d\lambda} = \frac{n!}{\pi^n} e^{\varphi(0)}.
\end{equation*}	

Next, we prove the necessity part. To that end, we assume that
\begin{equation*}
K_{\BB^n,p}(0;e^{-\varphi}) = \frac{n!}{\pi^n} e^{\varphi(0)}.
\end{equation*}

If there exists a positive integer $m$ such that $p=2/m$, then
\begin{align*}
K_{\BB^n}(0;e^{-\varphi}) & = \sup_f \frac{|f(0)|^2}{\int_{\BB^n}|f|^2e^{-\varphi}d\lambda} \\
& = \sup_f \frac{|f^m(0)|^p}{\int_{\BB^n}|f^m|^pe^{-\varphi}d\lambda} \leq K_{\BB^n,p}(0;e^{-\varphi}).
\end{align*}
Then it follows from inequality \eqref{Eq:KerLB} and the assumption that
\begin{equation*}
\frac{n!}{\pi^n} e^{\varphi(0)} \leq K_{\BB^n}(0;e^{-\varphi}) \leq K_{\BB^n,p}(0;e^{-\varphi}) = \frac{n!}{\pi^n} e^{\varphi(0)}.
\end{equation*}
Consequently,
\begin{equation*}
K_{\BB^n}(0;e^{-\varphi}) = \frac{n!}{\pi^n} e^{\varphi(0)}.
\end{equation*}
According to Theorem \ref{Thm:CharPH}, $\varphi$ is a pluriharmonic function on $\BB^n$. This proves the theorem in the case that $p=2/m$.

For general $p\in(0,2)$, we need a different argument. We assume the contrary, i.e., $\varphi$ is plurisubharmonic but \textit{not} pluriharmonic.

By Guan-Zhou's optimal $L^p$ extension theorem (Theorem \ref{Thm:OptLpExt}), there exists a holomorphic function $h\in\calO(\BB^n)$ such that $h(0)=1$ and
\begin{equation*}
	\int_{\BB^n}|h|^pe^{-\varphi}d\lambda \leq \frac{\pi^n}{n!} e^{-\varphi(0)}.
\end{equation*}
Since $\varphi$ is not pluriharmonic, Lemma \ref{Lemma7} yields
\begin{equation*}
K_\BB^n(0;e^{-\varphi-(2-p)\log|h|}) > \frac{n!}{\pi^n} e^{\varphi(0)}.
\end{equation*}
Equivalently, there exists $f\in\calO(\BB^n)$ such that $f(0)=1$ and
\begin{equation*}
	\int_{\BB^n} |f|^2e^{-\varphi-(2-p)\log|h|} d\lambda < \frac{\pi^n}{n!} e^{-\varphi(0)}.
\end{equation*}
By H\"older's inequality,
\begin{align*}
	\int_{\BB^n} |f|^pe^{-\varphi} d\lambda & = \int_{\BB^n} \big(|h|^pe^{-\varphi}\big)^{\frac{2-p}{2}} \big(|f|^2e^{-\varphi-(2-p)\log|h|}\big)^{\frac{p}{2}} d\lambda \\
	& \leq \left(\int_{\BB^n}|h|^pe^{-\varphi}d\lambda\right)^{\frac{2-p}{2}} \left(\int_{\BB^n}|f|^2e^{-\varphi-(2-p)\log|h|}d\lambda\right)^{\frac{p}{2}} \\
	& < \left(\frac{\pi^n}{n!} e^{-\varphi(0)}\right)^{\frac{2-p}{2}} \left(\frac{\pi^n}{n!} e^{-\varphi(0)}\right)^{\frac{p}{2}} = \frac{\pi^n}{n!} e^{-\varphi(0)}.
\end{align*}
It follows that
\begin{equation*}
K_{\BB^n,p}(0;e^{-\varphi}) \geq \frac{|f(0)|^p}{\int_{\BB^n} |f|^pe^{-\varphi} d\lambda} > \frac{n!}{\pi^n} e^{\varphi(0)}.
\end{equation*}
This contradicts the assumption. Hence, $\varphi$ must be pluriharmonic.
\end{proof}

To prove the general case of Theorem \ref{Thm:CharPH-pKer}, we need a transformation rule for weighted $p$-Bergman kernels. By an argument similar to that of Ning-Zhang-Zhou \cite[Proposition 2.1]{NZZ16}, we deduce the following two formulae.

\begin{lemma}
Let $\Phi:\Omega_1\to\Omega_2$ be a biholomorphic map between two domains in $\CC^n$. Let $\varphi$ be an upper semi-continuous function on $\Omega_2$ and $p>0$ a constant. Then
\begin{equation} \label{Eq:Rule1}
K_{\Omega_2,p}(\Phi(z);e^{-\varphi}) = K_{\Omega_1,p}(z;e^{-\varphi\circ\Phi+2\log|J_\Phi|}), \quad z\in\Omega_1.
\end{equation}
Moreover, if $\Omega_1$ is simply connected or $p=2/m$ for some integer $m$, then
\begin{equation} \label{Eq:Rule2}
K_{\Omega_2,p}(\Phi(z);e^{-\varphi}) = K_{\Omega_1,p}(z;e^{-\varphi\circ\Phi}) / |J_\Phi(z)|^2, \quad z\in\Omega_1.
\end{equation}
Here, $J_\Phi$ is the determinant of the holomorphic Jacobian matrix of $\Phi$.
\end{lemma}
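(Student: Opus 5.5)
The plan is to pass to the extremal problems on the two domains and transport holomorphic functions along $\Phi$, twisting by the Jacobian where needed. For \eqref{Eq:Rule1}, the key identity is the change of variables formula for a biholomorphism: the real Jacobian of $\Phi$ equals $|J_\Phi|^2$. Hence for any $g\in\calO(\Omega_2)$,
\begin{equation*}
\int_{\Omega_2}|g|^pe^{-\varphi}d\lambda=\int_{\Omega_1}|g\circ\Phi|^pe^{-\varphi\circ\Phi+2\log|J_\Phi|}d\lambda .
\end{equation*}
The map $g\mapsto g\circ\Phi$ is a bijection $\calO(\Omega_2)\to\calO(\Omega_1)$, with inverse $f\mapsto f\circ\Phi^{-1}$. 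It therefore maps the unit ball of $A^p(\Omega_2;e^{-\varphi})$ bijectively onto the unit ball of $A^p(\Omega_1;e^{-\varphi\circ\Phi+2\log|J_\Phi|})$. It also preserves the value at the base point, since $(g\circ\Phi)(z)=g(\Phi(z))$. Taking suprema of $|g(\Phi(z))|^p$ then gives \eqref{Eq:Rule1} directly. Upper semicontinuity of the new weight holds because $\log|J_\Phi|$ is continuous; $J_\Phi$ never vanishes.

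For \eqref{Eq:Rule2}, the task is to absorb the factor $|J_\Phi|^2$ into the holomorphic function. This requires a holomorphic $p$-th root: we need $u\in\calO(\Omega_1)$ with $|u|^p=|J_\Phi|^2$, i.e.\ $u=J_\Phi^{2/p}$.
\begin{itemize}
\item If $\Omega_1$ is simply connected, $J_\Phi$ is nowhere zero, so it has a holomorphic logarithm $\ell$. We set $u=\exp(2\ell/p)$.
\item If $p=2/m$, we simply take $u=J_\Phi^m$. No root is needed, so no topological assumption is required.
\end{itemize}

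With such a $u$, multiplication $f\mapsto fu$ is a bijection of $\calO(\Omega_1)$, since $u$ has no zeros. It satisfies
\begin{equation*}
\int_{\Omega_1}|fu|^pe^{-\varphi\circ\Phi}d\lambda=\int_{\Omega_1}|f|^pe^{-\varphi\circ\Phi+2\log|J_\Phi|}d\lambda ,
\end{equation*}
and $|(fu)(z)|^p=|f(z)|^p|J_\Phi(z)|^2$. Taking suprema,
\begin{equation*}
K_{\Omega_1,p}(z;e^{-\varphi\circ\Phi})=|J_\Phi(z)|^2K_{\Omega_1,p}(z;e^{-\varphi\circ\Phi+2\log|J_\Phi|}).
\end{equation*}
Combining this with \eqref{Eq:Rule1} yields \eqref{Eq:Rule2}.

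The only step needing care is the existence of the holomorphic root $J_\Phi^{2/p}$; this is exactly why the two alternative hypotheses appear. Everything else is bookkeeping with the change of variables formula, plus the observation that the suprema are unaffected when both sides are infinite or a space is trivial, since the bijections preserve all the relevant quantities.
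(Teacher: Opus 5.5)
Your proof is correct and follows essentially the same route as the paper: the change of variables with real Jacobian $|J_\Phi|^2$ gives the first identity, and a holomorphic $u$ with $|u|^p=|J_\Phi|^2$ (namely $e^{2\ell/p}$ with $\ell$ a holomorphic logarithm of $J_\Phi$ in the simply connected case, or $J_\Phi^m$ when $p=2/m$) gives the second. The only difference is cosmetic: you factor the second bijection as multiplication by $u$ on $\Omega_1$ followed by the first identity, whereas the paper uses the composite correspondence $f\mapsto u\cdot(f\circ\Phi)$ in a single step.
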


\begin{proof}
Since $\Phi$ is biholomorphic, $J_\Phi$ is a non-vanishing holomorphic function on $\Omega_1$. Given $f\in A^p(\Omega_2,e^{-\varphi})$, it is clear that
\begin{align*}
\int_{\Omega_2} |f(w)|^pe^{-\varphi(w)} d\lambda & = \int_{\Omega_1} |f(\Phi(z))|^pe^{-\varphi(\Phi(z))} |J_\Phi(z)|^2 d\lambda \\
& = \int_{\Omega_1} |f(\Phi(z))|^p e^{-\varphi(\Phi(z))+2\log|J_\Phi(z)|} d\lambda.
\end{align*}
Therefore,
\begin{equation*}
\Phi^*f = f\circ\Phi \in A^p(\Omega_1, e^{-\widetilde{\varphi}}),
\end{equation*}
where
\begin{equation*}
\widetilde{\varphi} := \varphi\circ\Phi-2\log|J_\Phi|.
\end{equation*}
Conversely, if $\Phi^*f \in A^p(\Omega_1, e^{-\widetilde{\varphi}})$, then $f\in A^p(\Omega_2,e^{-\varphi})$. With $w=\Phi(z)$, we have
\begin{align*}
K_{\Omega_2,p}(w;e^{-\varphi})
& = \sup_f \frac{|f(w)|^p}{\int_{\Omega_2} |f|^pe^{-\varphi} d\lambda} \\
& = \sup_{\Phi^*f} \frac{|f(\Phi(z))|^p}{\int_{\Omega_1} |\Phi^*f|^pe^{-\widetilde{\varphi}} d\lambda} = K_{\Omega_1,p}(z;e^{-\widetilde{\varphi}}).
\end{align*}

If $\Omega_1$ is simply connected, then there exists a holomorphic function $g$ such that $e^g = J_\Phi$. In other words, $g$ is a single-valued branch of $\log J_\Phi$. Then $h:=e^{2g/p}$ is a holomorphic function on $\Omega_1$ such that $|h|^p=|J_\Phi|^2$. If $p=2/m$ for some integer $m$, we simply take $h:=J_\Phi^m$.
In either case, for any $f\in A^p(\Omega_2,e^{-\varphi})$, we have
\begin{align*}
\int_{\Omega_2} |f(w)|^pe^{-\varphi(w)} d\lambda = \int_{\Omega_1} |f(\Phi(z))h(z)|^p e^{-\varphi(\Phi(z))} d\lambda.
\end{align*}
Therefore, $h\cdot\Phi^*f \in A^p(\Omega_1,e^{-\varphi\circ\Phi})$. It is clear that the correspondence
\begin{equation*}
f\in A^p(\Omega_2,e^{-\varphi}) \quad\mapsto\quad \widehat{f} := h\cdot\Phi^*f \in A^p(\Omega_1,e^{-\varphi\circ\Phi})
\end{equation*}
is bijective. With $w=\Phi(z)$, we have
\begin{align*}
K_{\Omega_2,p}(w;e^{-\varphi})
& = \sup_f \frac{|f(w)|^p}{\int_{\Omega_2} |f|^pe^{-\varphi} d\lambda} \\
& = \sup_{\widehat{f}} \frac{|\widehat{f}(z)/h(z)|^p}{\int_{\Omega_1} |\widehat{f}|^pe^{-\varphi\circ\Phi} d\lambda} = \frac{K_{\Omega_1,p}(z;e^{-\varphi\circ\Phi})}{|J_\Phi(z)|^2}.
\end{align*}
This completes the proof.
\end{proof}

Next, we recall some facts concerning the holomorphic automorphisms of $\BB^n$. Given a point $a\in\BB^n\setminus\{0\}$, let
\begin{equation*}
p_a(z) := \frac{\inner{z,a}}{|a|^2} a \quad\text{and}\quad q_a(z) := z - p_a(z).
\end{equation*}
Here, $\inner{z,a} := z_1\overline{a_1} + \cdots + z_n\overline{a_n}$ denotes the standard inner product of $\CC^n$.
It is well-known that
\begin{equation*}
\Phi(z) = \frac{a - p_a(z) - \sqrt{1-|a|^2}q_a(z)}{1-\inner{z,a}}
\end{equation*}
is a holomorphic automorphism of $\BB^n$. Moreover, by direct computations,
\begin{gather*}
\Phi(0)=a, \quad \Phi(a)=0, \quad
J_\Phi(z) = (-1)^n \left(\frac{\sqrt{1-|a|^2}}{1-\inner{z,a}}\right)^{n+1}.
\end{gather*}

Now, we can prove the general case of Theorem \ref{Thm:CharPH-pKer}.

\begin{proof}[Proof of Theorem \ref{Thm:CharPH-pKer} (the general case)]

Let $\varphi>-\infty$ be a plurisubharmonic function on $\BB^n$ and $0<p\leq2$ a constant. Let $a\in\BB^n\setminus\{0\}$ and $\Phi\in\operatorname{Aut}(\BB^n)$ be the same as above. Then inequalities \eqref{Eq:pKerLB} and \eqref{Eq:Rule2} together imply
\begin{align*}
K_{\BB^n,p}(a;e^{-\varphi}) & = K_{\BB^n,p}(0;e^{-\varphi\circ\Phi}) / |J_\Phi(0)|^2 \\
& \geq \frac{n!}{\pi^n} \frac{e^{\varphi(\Phi(0))}}{|J_\Phi(0)|^2} = \frac{n!}{\pi^n} \frac{e^{\varphi(a)}}{(1-|a|^2)^{n+1}}.
\end{align*}
Note that, using \eqref{Eq:Rule1} instead of \eqref{Eq:Rule2} would yield the same inequality. This proves the inequality \eqref{Eq:pKerLB}.

In the following, we suppose that
\begin{equation*}
K_{\BB^n,p}(a;e^{-\varphi}) = \frac{n!}{\pi^n} \frac{e^{\varphi(a)}}{(1-|a|^2)^{n+1}}.
\end{equation*}
Then the above proof shows that
\begin{equation*}
K_{\BB^n,p}(0;e^{-\varphi\circ\Phi}) = \frac{n!}{\pi^n} e^{\varphi(\Phi(0))}.
\end{equation*}
Since Theorem \ref{Thm:CharPH-pKer} holds for the case of $z=0$, we conclude that $\varphi\circ\Phi$ is pluriharmonic. Since $\Phi\in\operatorname{Aut}(\BB^n)$, we know $\varphi$ itself is pluriharmonic. This completes the proof for the general case.
\end{proof}

For the convenience of readers, we record the following optimal $L^p$ extension theorem as an equivalent statement of inequality \eqref{Eq:pKerLB}.

\begin{theorem} \label{Thm:OptLpExt-2}
Let $\phi>-\infty$ be a plurisubharmonic function on $\BB^n$. Let $0<p\leq2$ be a constant. For any $z\in\BB^n$, there exists a holomorphic function $F$ on $\BB^n$ such that $F(z)=1$ and
\begin{equation*}
	\int_{\BB^n} |F|^pe^{-\phi} d\lambda \leq \frac{\pi^n}{n!}(1-|z|^2)^{n+1} e^{-\phi(z)}.
\end{equation*}
\end{theorem}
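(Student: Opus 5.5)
The plan is to read Theorem \ref{Thm:OptLpExt-2} off the lower bound \eqref{Eq:pKerLB}, proved above via the transformation rule \eqref{Eq:Rule2} and the automorphism $\Phi$ with $\Phi(0)=a$. The argument has two halves: an extremal-function existence argument for the $p$-Bergman kernel, and a small finiteness issue.

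\textbf{Step 1 (the case $z=0$).} When $z=0$, Theorem \ref{Thm:OptLpExt} with $R=1$ already gives the claim, since $\phi(0)>-\infty$.

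\textbf{Step 2 (reduction for general $z$).} Fix $z\in\BB^n$. By \eqref{Eq:pKerLB},
\begin{equation*}
K_{\BB^n,p}(z;e^{-\phi}) \geq \frac{n!}{\pi^n}\frac{e^{\phi(z)}}{(1-|z|^2)^{n+1}} > 0.
\end{equation*}
In particular the kernel is positive. It is also finite, because $\phi$ is locally bounded above, so $|f|^p$ is plurisubharmonic and the sub-mean value inequality over a small ball bounds $|f(z)|^p$ by a constant times $\int|f|^pe^{-\phi}$.

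It therefore suffices to produce $F\in A^p(\BB^n;e^{-\phi})$ that attains the supremum defining $K_{\BB^n,p}(z;e^{-\phi})$. Normalizing so that $F(z)=1$, we would then have
\begin{equation*}
\int_{\BB^n}|F|^pe^{-\phi}\,d\lambda = \frac{1}{K_{\BB^n,p}(z;e^{-\phi})} \leq \frac{\pi^n}{n!}(1-|z|^2)^{n+1}e^{-\phi(z)}.
\end{equation*}

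\textbf{Step 3 (existence of an extremal).} This is the only step needing care, and I would argue exactly as in the existence lemma for minimizers of \eqref{Eq:Inf}. Take $f_j$ with $f_j(z)=1$ and $\int|f_j|^pe^{-\phi}\to 1/K_{\BB^n,p}(z;e^{-\phi})$. Since $e^{-\phi}$ is bounded below on compacts, the integrals $\int_K|f_j|^p$ are uniformly bounded. By plurisubharmonicity of $|f_j|^p$, the family is locally uniformly bounded. Montel's theorem then gives a subsequence converging locally uniformly to a holomorphic $F$ with $F(z)=1$. Fatou's lemma yields $\int|F|^pe^{-\phi}\leq \liminf_j\int|f_j|^pe^{-\phi}$, which is enough.

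\textbf{An alternative avoiding extremals.} One can instead pull back directly. Since $\BB^n$ is simply connected, choose a holomorphic $h$ with $|h|^p=|J_\Phi|^2$. Apply Theorem \ref{Thm:OptLpExt} to $\phi\circ\Phi$ at $0$, and push the resulting function forward via $f\mapsto (f/h)\circ\Phi^{-1}$, normalized at $z$. The Jacobian computation $|J_\Phi(0)|^2=(1-|z|^2)^{n+1}$ then gives the constant directly. I would present this variant as the main proof, since it yields an explicit $F$.
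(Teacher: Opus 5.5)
Your proposal is correct and follows essentially the paper's route: the paper records this theorem as equivalent to \eqref{Eq:pKerLB}, which it derives from the $z=0$ case (Theorem \ref{Thm:OptLpExt}) via the transformation rule \eqref{Eq:Rule2} for the automorphism $\Phi$ with $\Phi(0)=a$. Your pullback variant is the function-level form of \eqref{Eq:Rule2} (the inverse of the bijection $f\mapsto h\cdot\Phi^*f$), and your Montel--Fatou argument in Step 3 supplies the existence-of-extremal step that the paper leaves implicit in the word ``equivalent''.
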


\end{document}